\theoremstyle{plain}
\newtheorem{theorem}{Theorem}[section]
\newtheorem{lemma}[theorem]{Lemma}
\newtheorem{proposition}[theorem]{Proposition}
\newtheorem{corollary}[theorem]{Corollary}
\numberwithin{equation}{section}
\theoremstyle{definition}
\newtheorem{definition}[theorem]{Definition}
\newtheorem{example}[theorem]{Example}
\newtheorem{remark}[theorem]{Remark}
\newtheorem{notation}[theorem]{Notation}
\DeclareMathOperator{\Ob}{Ob}
\DeclareMathOperator{\Mor}{Mor}
\DeclareMathOperator{\Id}{Id}
\DeclareMathOperator{\Module}{-Mod}
\DeclareMathOperator{\gMod}{-gMod}
\DeclareMathOperator{\gmod}{-gmod}
\DeclareMathOperator{\fdmod}{-gmod_{fd}}
\DeclareMathOperator{\lgmod}{-lgmod}
\DeclareMathOperator{\supp}{supp}
\DeclareMathOperator{\Hom}{Hom}
\DeclareMathOperator{\Ext}{Ext}
\DeclareMathOperator{\ini}{ini}
\DeclareMathOperator{\Ker}{Ker}
\DeclareMathOperator{\op}{op}
\DeclareMathOperator{\Fq}{\mathbb{F}}
\newcommand{\bk}{{\Bbbk}}
\newcommand{\C}{{\mathscr{C}}}
\newcommand{\tC}{{\underline{\mathscr{C}}}}
\newcommand{\D}{{\mathscr{D}}}
\newcommand{\tD}{{\underline{\mathscr{D}}}}
\newcommand{\tE}{{\underline{\mathcal{E}}}}
\newcommand{\F}{{\mathbb{F}}}
\newcommand{\cF}{{\mathscr{F}}}
\newcommand{\cFI}{{\mathscr{FI}}}
\newcommand{\cFS}{{\mathscr{FS}}}
\newcommand{\cOS}{{\mathscr{OS}}}
\newcommand{\cOI}{{\mathscr{OI}}}
\newcommand{\cVI}{{\mathscr{VI}}}
\newcommand{\II}{{\mathbb{I}}}
\newcommand{\tV}{{\underline{\mathscr{V}}}}
\newcommand{\tY}{{\underline{\mathscr{Y}}}}
\newcommand{\Z}{{\mathbb{Z}}}
\newcommand{\DD}{{\mathrm{D}}}
\newcommand{\Pb}{{P^{\bullet}}}
\newcommand{\FI}{{\mathrm{FI}}}
\newcommand{\trest}{{\downarrow _{\tD} ^{\tC}}}
\newcommand{\rest}{{\downarrow _{\tD} ^{\tC}}}
\newcommand{\Ai}{{\mathsf{A}_{\infty}}}
\title{Koszulity of directed categories in representation stability theory}
\author{Wee Liang Gan}
\address{Department of Mathematics, University of California, Riverside, CA 92521, USA}
\email{wlgan@math.ucr.edu}
\author{Liping Li}
\address{HPCSIP(Ministry of Education), College of Mathematics and Computer Science, Hunan Normal University, Changsha, Hunan 410081, China}
\email{lipingli@hunnu.edu.cn}
\thanks{The second author is supported by the National Natural Science Foundation of China 11771135 and the Start-Up Funds of Hunan Normal University 830122-0037. Both authors would like to thank the anonymous referee for carefully checking the paper and providing quite many valuable suggestions.}
\begin{document}

\begin{abstract}
In the first part of this paper, we study Koszul property of directed graded categories. In the second part of this paper, we prove a general criterion for an infinite directed category to be Koszul. We show that infinite directed categories in the theory of representation stability are Koszul over a field of characteristic zero.
\end{abstract}

\maketitle

\tableofcontents

\section{Introduction}

The goal of this paper is to prove, in a unified manner, the Koszulity of several infinite directed categories which appear recently in the theory of representation stability. Before giving an overview of our results, let us state the conventions which we shall use throughout the paper.

\subsection{Notations and conventions} \label{Conventions}
Let $\Z_+$ be the set of non-negative integers. For any $x\in \Z_+$, we write $[x]$ for the set $\{1,\ldots,x\}$.

Let $\bk$ be a field. We write $V^*$ for the dual space $\Hom_{\bk}(V, \bk)$ of a $\bk$-vector space $V$. For any set $S$, we shall write $\bk S$ for the $\bk$-vector space with basis $S$. By a \emph{graded} $\bk$-vector space, we mean a $\Z$-graded $\bk$-vector space. Morphisms of graded $\bk$-vector spaces are $\bk$-linear maps which do not necessarily preserve degrees. We shall write:

\begin{itemize}
\item $\bk\Module$ for the category of $\bk$-vector spaces;

\item $\bk\gMod$ for the category of graded $\bk$-vector spaces;

\item $\bk\gmod$ for the category of graded $\bk$-vector spaces with finite dimensional graded components.
\end{itemize}

By a \emph{category}, we shall always mean a small category. For any object $x$ of a category, we denote by $1_x$ the identity morphism of $x$.

We say a category is \emph{finite} when its set of objects is finite; otherwise, we say that it is \emph{infinite}.\footnote{This definition is non-standard since in most literature, a category is called \textit{finite} if it has only finitely many morphisms. However, it is more convenient for us since in our framework, we often work with $\bk$-linear categories with infinitely many morphisms when $\bk$ is an infinite ring, and it is the finiteness of objects instead of the finiteness of morphisms that plays the key role.} A \emph{$\bk$-linear category} is a category enriched over $\bk\Module$. To distinguish $\bk$-linear categories from usual categories, we shall denote them, for example, by $\tC$ rather than $\C$. For any category $\C$, we shall also write $\tC$ for its $\bk$-linearization, i.e. $\tC$ has the same set of objects as $\C$ and $\tC(x,y)=k\C(x,y)$ for all objects $x, y$.

A \emph{graded} $\bk$-linear category is a category enriched over $\bk \gMod$. For any graded $\bk$-linear category $\tC$, let
$$ \tC_i = \bigoplus_{x,y\in \Ob(\tC)} \tC(x,y)_i . $$
Then composition of morphisms defines the natural structure of a graded $\bk$-algebra on $\bigoplus_{i\in\Z} \tC_i$. By a \emph{positively graded} $\bk$-linear category, we shall always mean a graded $\bk$-linear category $\tC$ such that the following conditions are satisfied:

\begin{itemize}
\item[(P1)]
$\tC(x,y)$ is finite dimensional for all $x,y \in \Ob(\tC)$;

\item[(P2)]
$\tC(x,y)_i = 0$ for all $x,y \in \Ob(\tC)$ and $i<0$;

\item[(P3)]
$\tC(x,y)_0=0$ if $x\neq y$;

\item[(P4)]
for each $x\in \Ob(\tC)$, the $\bk$-algebra $\tC(x,x)_0$ is semisimple;

\item[(P5)]
for each $x \in \Ob(\tC)$, there are only finitely many objects $y$ such that
$\tC(x, y)_1 \neq 0$ or $\tC(y, x)_1 \neq 0$;

\item[(P6)]
$\tC$ is generated in degrees 0 and 1, in the sense that $\tC_1\cdot\tC_i = \tC_{i+1}$ for all $i\in\Z_+$;
\end{itemize}

A \emph{directed} $\bk$-linear category is a $\bk$-linear category $\tC$ and a partial order $\leqslant$ on $\Ob(\tC)$ such that for any $x, y \in \Ob \C$, one has $x \leqslant y$ whenever $\tC (x, y) \neq 0$.  A full subcategory $\tD$ of $\tC$ is said to be a \emph{convex} subcategory if for any $x, y, z\in \Ob(\tC)$ with $x \leqslant z \leqslant y$, one has $z\in\Ob(\tD)$ whenever $x,y\in \Ob(\tD)$. The \emph{convex hull} of a given set of objects of $\tC$ is the smallest convex full subcategory of $\tC$ containing this set of objects. By a \emph{directed graded} $\bk$-linear category, we shall always mean a directed positively graded $\bk$-linear category $\tC$ such that, in addition to (P1)--(P6), the following conditions are satisfied:

\begin{itemize}
\item[(P7)]
$\tC(x,x)_i = 0$ for all $x\in \Ob(\tC)$ and $i>0$;

\item[(P8)]
the convex hull of any finite set of objects contains only finitely many objects.
\end{itemize}

\subsection{Koszul theory for directed graded $\bk$-linear categories}
We shall recall  in Sections \ref{Preliminaries} and \ref{Koszul theory for graded directed categories} the background on Koszul theory for graded $\bk$-linear categories, following \cite{BGS} and \cite{Mazorchuk}.\footnote{A positively graded $\bk$-linear category in our sense is always Morita-equivalent to one in the sense of \cite{Mazorchuk}. However, the proof of Koszulity in our examples uses the underlying combinatorial structure of the category and so it is more convenient for us not to make the Morita-equivalent replacement.} The graded $\bk$-linear categories which we shall study in this paper are directed graded $\bk$-linear categories. Our first main results are the following.

\begin{theorem} \label{main-theorem-1}
Let $\tC$ be a directed graded $\bk$-linear category. Then $\tC$ is a Koszul category if and only if every finite convex full subcategory of $\tC$ is a Koszul category.
\end{theorem}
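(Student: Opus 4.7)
The plan is to reduce the theorem to the following key lemma. For any convex full subcategory $\tD$ of $\tC$ and any $x, y \in \Ob(\tD)$, restriction induces an isomorphism
\[
\Ext^i_{\tC}(S_x, S_y) \cong \Ext^i_{\tD}(S_x, S_y)
\]
of graded vector spaces for every $i \geqslant 0$, where $S_x$ (resp.\ $S_y$) denotes the semisimple degree-$0$ module supported at $x$ (resp.\ $y$). Granting this lemma, both directions are immediate. Forward: if $\tD$ is any finite convex full subcategory and $x, y \in \Ob(\tD)$, then $\Ext^i_{\tD}(S_x, S_y) \cong \Ext^i_{\tC}(S_x, S_y)$ is concentrated in degree $i$ by Koszulity of $\tC$, so $\tD$ is Koszul. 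Backward: given $x, y \in \Ob(\tC)$, let $\tD$ be the convex hull of $\{x, y\}$, which is finite by (P8); then $\Ext^i_{\tC}(S_x, S_y) \cong \Ext^i_{\tD}(S_x, S_y)$ is pure of degree $i$ by Koszulity of $\tD$.

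To prove the lemma I analyse the minimal graded projective resolution $P^\bullet \to S_x$ over $\tC$, whose terms are direct sums of shifted indecomposable projectives $P_z = \tC(z, -)$. An easy induction using (P3), (P7), and directedness shows that every $z$ indexing a summand of $P^\bullet$ satisfies $z \geqslant x$. A differential summand $P_{z_1} \to P_{z_2}$ corresponds to an element of $\tC(z_2, z_1)$, which by directedness is non-zero only when $z_2 \leqslant z_1$. Therefore, if $z_1 \in \Ob(\tD)$, then $x \leqslant z_2 \leqslant z_1$ forces $z_2 \in \Ob(\tD)$ by convexity. Consequently the subcomplex $Q^\bullet \subseteq P^\bullet$ consisting of exactly the summands indexed by objects of $\tD$ is closed under the differential.

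Evaluating $P^\bullet$ at any $z \in \Ob(\tD)$ only involves summands $P_{z'}$ with $z' \leqslant z$; combined with $z' \geqslant x$ and convexity, such $z'$ lie in $\Ob(\tD)$, so $P^\bullet(z) = Q^\bullet(z)$ for all $z \in \Ob(\tD)$. Since restriction to $\tD$ is evaluation at $\Ob(\tD)$ and is therefore exact, $Q^\bullet|_{\tD}$ is an exact resolution of $S_x$. Because $\tD$ is a full subcategory, each retained $P_z$ restricts to the indecomposable projective $\tD(z, -)$, and minimality is preserved as the differentials remain of strictly positive degree. Thus $Q^\bullet|_{\tD}$ is \emph{the} minimal projective resolution of $S_x$ over $\tD$, and reading off the multiplicity of each shifted $P_y$-summand yields the claimed isomorphism.

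The main obstacle is this key lemma; its crux is the interlocking use of directedness (to control the support of the minimal resolution so that every object appearing is $\geqslant x$) together with convexity of $\tD$ (to prevent the differential from escaping $\tD$). Once the lemma is established, the two implications of the theorem are a matter of choosing the right $\tD$, with (P8) supplying the finiteness needed in the backward direction.
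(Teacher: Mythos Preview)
Your argument is correct. The core mechanism—that the minimal projective resolution of $S_x$ over $\tC$, when restricted to a convex full subcategory $\tD$ containing $x$, becomes the minimal projective resolution of $S_x$ over $\tD$—is exactly what drives the paper's proof as well. The paper, however, packages this differently: it first proves a module-level statement (Proposition~\ref{restriction of Koszul modules}) that a module $M$ is Koszul iff its restriction to every finite convex subcategory containing $\supp(M_0)$ is Koszul, phrased in terms of syzygies rather than Ext groups, and it reaches this by first passing to the minimal \emph{coideal} containing $\supp(M_0)$ so that the relevant convex subcategories become \emph{ideals}, for which the restriction functor $j^*$ is visibly exact and projective-preserving. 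Your argument bypasses the coideal/ideal reduction by observing directly that the summands $P_z$ with $z\notin\Ob(\tD)$ restrict to zero (via convexity and $z\geqslant x$), which is slicker for this particular theorem. The paper's detour buys a more general statement (about arbitrary lower-bounded modules, and the equivalence with Koszulity of ideals and coideals), which it reuses later; your route is more economical if one only wants Theorem~\ref{main-theorem-1}. One minor wording issue: $P_z=\tC(z,-)$ need not be indecomposable since $\tC(z,z)_0$ is only assumed semisimple, but this is harmless—your argument only uses the index $z$ of each summand, not indecomposability.
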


\begin{theorem} \label{main-theorem-2}
Let $\tC$ be a directed graded $\bk$-linear category. Let $\tD$ be the subcategory of $\tC$ defined by $\Ob(\tD) = \Ob(\tC)$ and
\begin{equation*}
 \tD(x,y)=\left\{ \begin{array}{ll}
 \tC(x,y) & \mbox{ if } x\neq y,\\
  \bk & \mbox{ if } x=y.
\end{array} \right.
\end{equation*}
Then $\tC$ is a Koszul category if and only if $\tD$ is a Koszul category.
\end{theorem}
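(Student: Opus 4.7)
The plan is to reduce the problem to the finite case by applying Theorem \ref{main-theorem-1}, and then to compare the reduced bar complexes of the associated graded category algebras in order to transfer the Koszul property between $\tC$ and $\tD$.

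First, I would observe that each finite convex full subcategory $\tE$ of $\tC$ gives rise to a corresponding finite convex full subcategory $\tE'$ of $\tD$ with the same set of objects, with $\tE'(x,y) = \tE(x,y)$ for $x \neq y$ and $\tE'(x,x) = k$; this association is a bijection between the finite convex full subcategories of $\tC$ and those of $\tD$, and the pair $(\tE, \tE')$ stands in the same relationship as $(\tC, \tD)$. By two applications of Theorem \ref{main-theorem-1}, it therefore suffices to prove the theorem under the additional assumption that $\tC$ (and hence $\tD$) has finitely many objects.

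In the finite case, I would form the finite-dimensional positively graded $k$-algebras $A = \bigoplus_{x,y} \tC(x,y)$ and $B = \bigoplus_{x,y} \tD(x,y)$, so that $B \hookrightarrow A$ is a graded subalgebra inclusion with $A_i = B_i$ for all $i \geq 1$ (by (P7)) and $B_0 = \bigoplus_x k \cdot 1_x \subseteq A_0 = \bigoplus_x \tC(x,x)_0$. Koszulity of $\tC$ (respectively $\tD$) is equivalent to the graded vector space $\mathrm{Tor}^A_n(A_0, A_0)$ (respectively $\mathrm{Tor}^B_n(B_0, B_0)$) being concentrated in internal degree $n$ for every $n \geq 0$. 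These $\mathrm{Tor}$ groups are computed by the reduced bar complexes $(A_{>0})^{\otimes_{A_0} \bullet}$ and $(A_{>0})^{\otimes_{B_0} \bullet}$ respectively (noting $A_{>0} = B_{>0}$), and since $B_0 \subseteq A_0$ there is a natural surjective chain map
\[
\pi_\bullet \colon (A_{>0})^{\otimes_{B_0} \bullet} \twoheadrightarrow (A_{>0})^{\otimes_{A_0} \bullet}
\]
imposing the additional $A_0$-bilinearity relations, which preserves internal grading. The associated short exact sequence of complexes yields a long exact sequence in homology relating $\mathrm{Tor}^B(B_0, B_0)$ and $\mathrm{Tor}^A(A_0, A_0)$ via the homology of the kernel complex $K_\bullet := \ker \pi_\bullet$.

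The main technical obstacle is to analyze $H_*(K_\bullet)$ finely enough to transfer the internal-degree diagonality condition in both directions. The kernel $K_\bullet$ is spanned by ``exchange elements'' involving a chosen complement $A_0/B_0$ of $B_0$ in $A_0$, which lies entirely in internal degree $0$, so $K_\bullet$ shares the internal-degree support of the ambient bar complex. A promising approach is to filter $K_\bullet$ by the number of $A_0/B_0$-insertions, producing a spectral sequence whose pages are expressible in terms of bar complexes of $B$ with coefficients in tensor powers of $A_0/B_0$; the diagonal concentration of these pages, following from the Koszulity of $B$, then forces the corresponding diagonality of $\mathrm{Tor}^A(A_0, A_0)$, and conversely.
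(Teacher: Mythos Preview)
Your reduction to the finite case via Theorem \ref{main-theorem-1} is correct and is exactly the paper's first move. After that, the paper proceeds differently: instead of comparing bar complexes, it establishes a module-level transfer principle (Theorem \ref{change endomorphisms}) showing that for each lower-bounded $M$, the Koszul property of $M$ as a $\tC$-module is equivalent to the Koszul property of $M\trest$ as a $\tD$-module, and for the finite case this is delegated to \cite[Theorems 5.12 and 5.13]{Li2}. Applying this with $M=\tC(x,x)$ and noting $\tC(x,x)\trest\cong \tD(x,x)^{\oplus\dim_k\tC(x,x)}$ gives both directions immediately.

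Your bar-complex route is a reasonable alternative in spirit, but as written it has a genuine gap. The spectral sequence is only gestured at: the filtration ``by the number of $A_0/B_0$-insertions'' is not made precise (the kernel $K_n$ is generated by exchange relations, not freely spanned by elements with a well-defined insertion count, so one must specify the filtration carefully and check it is compatible with the differential), the $E_1$ page is not actually identified, and no degeneration or extension argument is given. More seriously, the final clause ``and conversely'' is unsupported: from Koszulity of $A$ you would need to deduce diagonal concentration of $H_*(K_\bullet)$ in order to conclude the same for $H_*(\mathrm{Bar}^B)$, but the long exact sequence only gives you information about $H_*(K_\bullet)$ \emph{from} $H_*(\mathrm{Bar}^B)$ and $H_*(\mathrm{Bar}^A)$ together, not from $H_*(\mathrm{Bar}^A)$ alone. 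A workable version of this argument would likely need the separability of $A_0$ as a $B_0$-algebra (so that $A_{>0}\otimes_{B_0}A_{>0}$ splits naturally over $A_{>0}\otimes_{A_0}A_{>0}$), which condition (P4) does not guarantee over an imperfect field. The paper's module-theoretic approach sidesteps this issue entirely.
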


\begin{remark} \label{essential subcategory}
Let us call the directed graded $\bk$-linear category $\tD$ defined in Theorem \ref{main-theorem-2} the \emph{essential subcategory} of $\tC$. It is immediate from Theorem \ref{main-theorem-2} that if two directed graded $\bk$-linear categories have isomorphic essential subcategories, then they are both Koszul when any one of them is Koszul. We emphasize that the conclusion holds because we are considering \emph{directed} categories; it fails for general $\bk$-linear categories or $\bk$-algebras.
\end{remark}

\subsection{Directed graded $\bk$-linear categories of type $\Ai$}
We say that a directed graded $\bk$-linear category $\tC$ is of \emph{type $\Ai$} if:

\begin{itemize}
\item $\Ob(\tC)=\Z_+$ with the natural partial order on $\Z_+$;

\item  for $x\leqslant y$, the graded vector space $\tC(x,y)$ is nonzero and concentrated in degree $y-x$.
 \end{itemize}
Let us give some examples of categories whose $\bk$-linearizations are directed graded $\bk$-linear categories of type $\Ai$.

\begin{example} \label{egFI}
Define the category $\cFI$ with $\Ob(\cFI)=\Z_+$ as follows. For any $x,y\in \Z_+$, let $\cFI(x,y)$ be the set of injections from $[x]$ to $[y]$. Then $\cFI$ is equivalent to the category $\FI$ of all finite sets and injections, which has appeared in many different contexts; in particular, see \cite[Section 1]{CEF}. The category of $\FI$-modules plays an important role in the theory of representation stability developed in \cite{CF}, \cite{CEF}, and \cite{CEFN} (see also \cite{Farb}). Let us also mention that the category $\cFI$  appears naturally in several other contexts; see, for example, \cite{KM} and \cite{RS}.
\end{example}

\begin{example} \label{egFIG}
(See \cite[Example 3.7]{GL}, \cite[Section 10.1]{SS}) Let $\Gamma$ be a finite group.  Define the category $\cFI_\Gamma$ with $\Ob(\cFI_\Gamma)=\Z_+$ as follows. For any $x,y\in \Z_+$, let $\cFI_\Gamma(x,y)$ be the set of all pairs $(f,c)$ where $f: [x] \to [y]$ is an injection, and $c: [x] \to \Gamma$ is an arbitrary map.  The composition of $(f_1, c_1)\in \cFI_\Gamma(x, y)$ and $(f_2, c_2)\in \cFI_\Gamma(y,z)$ is defined by $$ (f_2, c_2) (f_1, c_1) = (f_3, c_3) $$ where $$ f_3(r)=f_2(f_1(r)), \quad c_3(r)=c_2(f_1(r))c_1(r), \quad \mbox{ for all } r\in [x]. $$ Then $\cFI_{\Z/2\Z}$ is equivalent to the category $\FI_{BC}$ studied by J. Wilson \cite[Definition 1.1]{Wilson}.
\end{example}

\begin{example} \label{egFIoG}
Let $\Gamma$ be a finite abelian group. Define the subcategory $\cFI^\prime_\Gamma$ of $\cFI_\Gamma$ as follows. Let $\Ob(\cFI^\prime_\Gamma)=\Z_+$, and for any $x,y\in \Z_+$, let $\cFI^\prime_\Gamma(x,y)$ be the set of all pairs $(f,c) \in \cFI_\Gamma(x,y)$ such that $c(1)\cdots c(x)=1$ whenever $f$ is a bijection. Then $\cFI^\prime_{\Z/2\Z}$ is equivalent to the category $\FI_D$ studied by J. Wilson \cite[Definition 1.1]{Wilson}.
\end{example}

\begin{example} \label{egFIq}
(See \cite{Djament}, \cite[Example 3.9]{GL}, \cite[Section 1.2]{PS}) Let $\Fq$ be a finite field. Define the category $\cVI$ with $\Ob(\cVI)=\Z_+$ as follows. For any  $x,y\in \Z_+$, let $\cVI(x,y)$ be the set of injective linear maps from $\Fq^x$ to $\Fq^y$.
\end{example}

Suppose $\tC$ is a directed graded $\bk$-linear category of type $\Ai$. A $\tC$-module is a covariant $\bk$-linear functor $M: \tC\to \bk \Module$. We say that $M$ is \emph{generated in positions $\leqslant x$} if the only submodule of $M$ containing $M(y)$ for all $y\leqslant x$ is $M$.

The following theorem is the main result of this paper.

\begin{theorem} \label{main-theorem-3}
Let $\tC$ be a directed graded $\bk$-linear category of type $\Ai$. Suppose that there exists a faithful $\bk$-linear functor  $\iota: \tC \to \tC$ such that for each $x\in\Z_+$ one has $\iota(x)=x+1$, and the pullback of the $\tC$-module $\tC(x,-)$ via $\iota$ is a projective $\tC$-module generated in positions $\leqslant x$.  Then $\tC$ is a Koszul category.
\end{theorem}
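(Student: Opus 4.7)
The plan is to reduce the Koszulity of the infinite category $\tC$ to a collection of finite problems using the two preceding main theorems, and then to construct linear projective resolutions of the simple modules by iterating the shift functor $\iota$. Throughout I write $P_x=\tC(x,-)$ for the standard projective at $x$.

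First, by Theorem~\ref{main-theorem-1}, it suffices to verify Koszulity of every finite convex full subcategory of $\tC$; since $\tC$ is of type $\Ai$ and $\Ob(\tC)=\Z_+$ with its natural order, these are exactly the interval subcategories on the objects $\{a,a+1,\ldots,b\}$ for $0\leqslant a\leqslant b$. By Theorem~\ref{main-theorem-2}, I may further pass to the essential subcategory and assume $\tC(x,x)=k$ for every $x$, because the functor $\iota$ descends to the essential subcategory while preserving its defining property. Since $\iota$ is faithful, the induced maps $\iota:\tC(x,y)\to\tC(x+1,y+1)$ are injective and natural in $y$; they glue into an injective $\tC$-module morphism $\iota^{\sharp}:P_x\hookrightarrow \iota^{\ast}P_{x+1}$, where $\iota^{\ast}$ denotes pullback along $\iota$.

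By hypothesis, $\iota^{\ast}P_{x+1}$ is projective and generated in positions $\leqslant x+1$. The type-$\Ai$ grading constraint forces each indecomposable projective summand $P_z$ of $\iota^{\ast}P_{x+1}$ (possibly with a grading shift) to satisfy $z\in\{x,x+1\}$; combined with $\tC(x,x)=k$ for every $x$, this yields a decomposition of $\iota^{\ast}P_{x+1}$ as a direct sum of copies of $P_x$ together with copies of $P_{x+1}$ shifted in internal degree by $1$. The morphism $\iota^{\sharp}$ identifies $P_x$ with a direct summand, so the cokernel of $\iota^{\sharp}$ is again projective. I then construct a linear projective resolution of each simple module $S_n$ by iterating this decomposition: the kernel of $P_n\twoheadrightarrow S_n$ is generated in position $n+1$ and internal degree $1$ and is covered by copies of $P_{n+1}$ shifted by $1$; the next syzygy is controlled by applying the decomposition to $\iota^{\ast}P_{n+2}$, and so on. Inductively the $i$-th term of the resolution is a direct sum of copies of $P_{n+i}$ shifted in internal degree by $i$, giving the required linearity; truncation to the finite convex subcategory preserves linearity since it merely restricts the support.

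The principal obstacle I anticipate is verification of exactness at each stage of the iterative construction, i.e.\ checking that the inductively defined differential captures the full kernel of the preceding map rather than a proper submodule. This amounts to a bar-type homological computation, and the crucial input is precisely that $\iota^{\ast}P_x$ is genuinely projective (not merely admits a projective presentation): that hypothesis ensures that the iterated pullback short exact sequences assemble into an honest resolution rather than a complex with nontrivial homology.
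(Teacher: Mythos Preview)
Your reduction to the essential subcategory is incorrect: the hypothesis that $\iota^*\tC(x,-)$ is projective does \emph{not} descend to $\tD$. Take $\C=\cFI$ and $x=2$. Then $\iota_{\tD}^*\tD(2,-)(y)=\tD(2,y+1)$ has dimensions $1,6,12,20$ at $y=1,2,3,4$, while any projective of the form $\tD(1,-)^{\oplus a}\oplus\tD(2,-)^{\oplus b}$ has dimensions $a,\,2a+b,\,3a+6b,\,4a+12b$; the first two equations force $a=1$, $b=4$, and then the third gives $27\neq 12$. Since the grading and support constraints leave no other indecomposable projective summands available, $\iota_{\tD}^*\tD(2,-)$ is not projective over $\tD$. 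So you cannot assume $\tC(x,x)=k$.

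Your iterative construction of the resolution is also a genuine gap, not merely a technicality. You assert that ``the next syzygy is controlled by applying the decomposition to $\iota^*P_{n+2}$'', but the second syzygy of $S_n$ lives inside a sum of copies of $P_{n+1}$, and you never explain how the decomposition of $\iota^*P_{n+2}$ bears on that kernel. Nor is it clear that your map $\iota^\sharp$ splits once $\tC(x,x)\neq k$: at position $x$ it is the injection $\tC(x,x)\hookrightarrow\tC(x+1,x+1)$, which is not surjective in the examples of interest. The paper's argument does not attempt to build the resolution termwise. The key lemma is that restriction along $\iota$ commutes with taking syzygies up to a projective summand generated in the correct position: if $M$ is generated in position $x$ then $(\Omega M)\!\downarrow\,\cong\,\Omega(M\!\downarrow)\oplus Q$ with $Q$ projective generated in position $x$. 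Iterating this gives $(\Omega^n M)\!\downarrow\,\cong\,\Omega^n(M\!\downarrow)\oplus Q^{n-1}$, so that $M\!\downarrow$ Koszul implies $M$ Koszul (each syzygy of $M$, restricted, is generated in the right position, hence $M$'s syzygy itself is generated one position higher). The proof then runs by induction on $x$: the module $\tC(x,x)\!\downarrow$ is supported at position $x-1$ and is a summand of copies of $\tC(x-1,x-1)$, hence Koszul by induction, whence $\tC(x,x)$ is Koszul. The reductions via Theorems~\ref{main-theorem-1} and~\ref{main-theorem-2} are not used here.
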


It was first observed by T. Church, J. Ellenberg, B. Farb, and R. Nagpal \cite[Proposition 2.12]{CEFN} that the conditions in Theorem \ref{main-theorem-3} hold for the category $\cFI$. We shall give combinatorial conditions on $\C$ which will imply the conditions in the theorem. In practice, it is quite easy to verify those combinatorial conditions, and we shall show in Section \ref{combinatorial} that they hold for the categories $\cFI_\Gamma$ and $\cVI$ above, and the categories $\cOI_\Gamma$, $\cFI_d$, $\cOI_d$, $\cFS_G^{\op}$ and $\cOS_G^{\op}$ (see \cite{SS} and \cite{SS2}). Thus, when the characteristic of $\bk$ is 0, the $\bk$-linearizations of these categories are Koszul; moreover, from Theorem \ref{main-theorem-2}, we deduce that the $\bk$-linearization of $\cFI^\prime_\Gamma$ is also Koszul (when $\Gamma$ is abelian).

\subsection{Categories over $\cFI$}
Let $\C$ be a category and let $\rho : \C \to \cFI$ be a functor. We shall construct a directed graded $\bk$-linear category $\tC^{tw}$, called the \emph{twist} of $(\C, \rho)$. The twist $\tC^{tw}$ is constructed from $\tC$ by twisting the composition of morphisms in $\C$ by appropriate signs. We shall show that there is an equivalence $\mu$ from the category of $\tC^{tw}$-modules to the category of $\tC$-modules.

\begin{theorem} \label{main-theorem-4}
Assume that the characteristic of $\bk$ is 0. Let $\C$ be $\cFI_\Gamma$, $\cOI_\Gamma$, $\cFI_d$ or $\cOI_d$. Let $\tC^!$ be the quadratic dual of $\tC$, and let $\tY$ be the Yoneda category of $\tC$. Then one has the following:
\begin{enumerate}
\item
$\tC^!$ is isomorphic to $(\tC^{tw})^{\op}$.

\item
The category of $\tY$-modules is equivalent to the category of $\tC$-modules.

\item
The category $D^b(\tC\fdmod)$ is self-dual, where $\tC\fdmod$ denotes the category of graded $\tC$-modules which are finite dimensional.
\end{enumerate}
\end{theorem}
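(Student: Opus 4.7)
The plan is to establish (1) by a direct presentation-level computation of the quadratic dual, and then to deduce (2) and (3) formally from Koszul duality together with the equivalence $\mu$ constructed just before the theorem.

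For part (1), I would first describe the degree zero and degree one components of $\tC$ in each of the four cases. The algebra $\tC(n,n)_0$ is the group algebra of the automorphism group of $n$ in $\C$ (for instance $k[\Gamma^n]$ for $\cOI_\Gamma$ and $k[S_n\ltimes \Gamma^n]$ for $\cFI_\Gamma$), while $\tC(n,n+1)_1$ has a standard basis indexed by ``insertion'' morphisms at positions $1,\dots,n+1$ together with the relevant decorations (elements of $\Gamma$ in the $\Gamma$-cases, elements of $[d]$ in the $d$-cases). The quadratic relations are the elementary commutation identities saying that inserting first at position $i$ and then at position $j$ with $j>i$ agrees with inserting first at position $j$ and then at position $i$ (with the appropriate re-indexing), together with the obvious compatibility of the decorations. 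Passing to the orthogonal complement inside $\tC_1^*\otimes_{\tC_0}\tC_1^*$ replaces each such commutation relation by its signed anti-commutation analogue; by construction of the twist this is exactly the composition law of $(\tC^{tw})^{\op}$, since $\rho:\C\to \cFI$ records precisely the permutation needed to straighten a composition. Matching generators and relations on the nose yields the isomorphism $\tC^! \cong (\tC^{tw})^{\op}$.

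For part (2), I would invoke the Koszulity of these four categories, which follows from Theorem \ref{main-theorem-3} together with the combinatorial verifications in Section \ref{combinatorial}. For a Koszul positively graded $k$-linear category, the theory developed in \cite{BGS, Mazorchuk} gives a canonical isomorphism of graded $k$-linear categories between the Yoneda category $\tY$ and $(\tC^!)^{\op}$. Combining with (1), $\tY \cong \tC^{tw}$. The equivalence $\mu$ from $\tC^{tw}$-modules to $\tC$-modules then identifies $\tY$-modules with $\tC$-modules, proving (2).

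For part (3), the Koszul duality theorem for positively graded Koszul categories satisfying our finiteness hypotheses supplies a contravariant triangulated equivalence $D^b(\tC\fdmod) \simeq D^b(\tC^!\fdmod)^{\op}$. Since $\tC^! \cong (\tC^{tw})^{\op}$ by (1), the category $\tC^!\fdmod$ is equivalent to $(\tC^{tw}\fdmod)^{\op}$ via the $k$-linear duality $M\mapsto M^*$ on finite dimensional graded modules. Combining these identifications with $\mu$ converts Koszul duality into an equivalence $D^b(\tC\fdmod)\simeq D^b(\tC\fdmod)^{\op}$, which is precisely the stated self-duality. The main obstacle lies in part (1): one must match the signs produced by the orthogonality construction with those produced by $\rho$ uniformly across all four examples, and one must track the several opposite-category switches carefully. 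Once (1) is in hand, parts (2) and (3) are essentially formal consequences of the Koszul duality formalism, applicable here because Koszulity has already been established.
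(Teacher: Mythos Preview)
Your overall architecture matches the paper's: part (1) by an explicit degree-two computation, then (2) and (3) as formal consequences of Koszulity plus the equivalence $\mu$. Two points deserve attention.

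First, in part (1) you implicitly assume that $(\tC^{tw})^{\op}$ is quadratic when you say that matching generators and degree-two relations yields the isomorphism. The paper makes this step explicit: one first invokes Koszulity of $\tC$ (Corollary~\ref{examples are koszul}) to conclude $\tC$ is quadratic (Proposition~\ref{Koszul categories are quadratic}), and then observes that the twist construction preserves quadraticity, so $(\tC^{tw})^{\op}$ is quadratic. Without this, knowing that the degree-two relations match does not a priori give an isomorphism of the full categories. Your sketch of the sign computation itself (commutation $\leadsto$ anti-commutation via the determinant line) is the same mechanism the paper uses; the paper writes it out concretely as $\eta^*(f)=h_r\otimes f_r + h_s\otimes f_s$ versus $\omega(f_r\otimes h_r + f_s\otimes h_s)=(f_r\otimes h_r - f_s\otimes h_s)\otimes(s\wedge r)$.

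Second, in part (3) you have a variance error. The Koszul duality functor $\mathrm{K}$ of \cite{BGS,Mazorchuk} used in the paper is \emph{covariant}: it gives $D^b(\tC\fdmod)\simeq D^b(\tC^!\fdmod)$, not $D^b(\tC^!\fdmod)^{\op}$. With your (incorrect) contravariant $\mathrm{K}$, the chain $\mathrm{K}$, then $(-)^*$, then $\mu$ yields a covariant self-equivalence of $D^b(\tC\fdmod)$, contradicting your conclusion. The paper's chain is
\[
D^b(\tC\fdmod)\xrightarrow{\mathrm{K}} D^b(\tC^!\fdmod)\xrightarrow{\simeq} D^b((\tC^{tw})^{\op}\fdmod)\xrightarrow{(-)^*} D^b(\tC^{tw}\fdmod)^{\op}\xrightarrow{\mu} D^b(\tC\fdmod)^{\op},
\]
with the single contravariance coming from $(-)^*$. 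You should also note that the restriction of $\mathrm{K}$ from $D^{\downarrow}/D^{\uparrow}$ to $D^b(\fdmod)$ is not automatic; in the paper it is Corollary~\ref{koszul duality functor on bounded derived categories}, which uses that every finite-dimensional $\tC^!$-module has finite projective dimension.
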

To prove Theorem \ref{main-theorem-4} (2), we note that by the Koszulity of $\tC$, we have $\tY \cong (\tC^!)^{\op}$, hence by (1) we have $\tY \cong \tC^{tw}$, and then we use the equivalence $\mu$ from the category of $\tC^{tw}$-modules to the category of $\tC$-modules to deduce that the category of $\tY$-modules is equivalent to the category of $\tC$-modules.

To prove Theorem \ref{main-theorem-4} (3), we construct an equivalence of categories
\begin{equation*}
 \kappa: D^b(\tC\fdmod) \longrightarrow D^b(\tC\fdmod)^{\op}
\end{equation*}
as a composition of the Koszul duality functor $\mathrm{K}$, vector space duality $(-)^*$, and $\mu$.

The Koszul property of the category of $\FI$-modules over a field of characteristic 0 was studied by S. Sam and A. Snowden in the language of twisted commutative algebras, see \cite{SS0}. In particular, in the case of the category FI, they constructed an equivalence of categories similar to $\kappa$ which they call the Fourier transform; it seems likely that these two functors are related.

\subsection{Structure of the paper}
The paper is organized as follows. In Section \ref{Preliminaries}, we recall the definition of a Koszul positively graded $\bk$-linear category. Then, in Section \ref{Koszul theory for graded directed categories}, we study Koszul theory for directed graded $\bk$-linear categories, and prove Theorem \ref{main-theorem-1} and Theorem \ref{main-theorem-2}. In Section \ref{type A-infinity}, we prove Theorem \ref{main-theorem-3}. In Section \ref{combinatorial}, we verify that our main examples satisfy the conditions in Theorem \ref{main-theorem-3}. In Section \ref{over FI}, we discuss the twist construction and prove Theorem \ref{main-theorem-4}.

\section{Modules over graded $\bk$-linear categories} \label{Preliminaries}

In this section, we introduce the various categories of modules over a graded $\bk$-linear category which we shall consider, and define the notion of a Koszul positively graded $\bk$-linear category. By a \emph{module}, we shall always mean a left-module.

\subsection{Graded $\bk$-linear categories and their modules}
Let $\tC$ be a graded $\bk$-linear category. We define:

\begin{itemize}
\item
$\tC\Module$, the category of \emph{$\tC$-modules}, as the category whose objects are the $\bk$-linear functors from $\tC$ to $\bk\Module$, and whose morphisms are the $\bk$-linear natural transformations of functors;

\item
$\tC\gMod$, the category of \emph{graded $\tC$-modules}, as the category whose objects are the degree-preserving $\bk$-linear functors from $\tC$ to $\bk\gMod$, and whose morphisms are the degree-preserving $\bk$-linear natural transformations of functors;

\item
$\tC\gmod$, the category of \emph{locally finite $\tC$-modules}, as the category whose objects are the degree-preserving $\bk$-linear functors from $\tC$ to $\bk\gmod$, and whose morphisms are the degree-preserving $\bk$-linear natural transformations of functors.
\end{itemize}

Thus, a $\tC$-module $M$ assigns a $\bk$-vector space $M(x)$ to each $x \in \Ob \tC$, and a linear transformation $M(\alpha): M(x) \to M(y)$ to each morphism $\alpha: x \to y$ in $\tC$, such that all composition relations in $\tC$ are respected. As usual, we shall write $\alpha$ for $M(\alpha)$.

\begin{example}
For any graded $\bk$-linear category $\tC$, and $x\in \Ob(\tC)$, the representable functor $\tC(x,-) : \tC \to \bk \gMod$ is a graded $\tC$-module.
\end{example}

\begin{remark}
If $\tC$ is the $\bk$-linearization of a category $\C$, then the notion of $\tC$-modules coincides with the notion of $\C$-modules over $\bk$, defined as functors from $\C$ to $\bk\Module$.
\end{remark}

For any  graded $\tC$-module $M$, let
$$  M_i = \bigoplus_{x\in \Ob(\tC)} M(x)_i . $$
Then $\bigoplus_{i\in\Z} M_i$ has the natural structure of a $\Z$-graded $\bigoplus_{i\in\Z} \tC_i$-module.

\begin{remark}
The functor $M\mapsto \bigoplus_{i\in\Z} M_i$ from the category $\tC\gMod$ to the category of $\Z$-graded $\bigoplus_{i\in\Z} \tC_i$-modules is fully faithful; it is essentially surjective if and only if $\tC$ is a finite category.
\end{remark}

For any graded $\tC$-module $M$ and $i\in\Z$, we define $\supp (M_i)$ to be the full subcategory of $\tC$ consisting of all objects $x \in \Ob(\tC)$ such that $M(x)_i \neq 0$, and define $\supp (M)$ to be the full subcategory of $\tC$ consisting of all objects $x \in \Ob(\tC)$ such that $M(x) \neq 0$. We say that $M$ is \emph{supported} on a full subcategory $\tD$ of $\tC$ if $\supp (M)$ is a full subcategory of $\tD$.

If $M$ is a $\tC$-module, we say that a subset $S \subseteq \bigcup_{x\in \Ob(\tC)}M(x)$ is a set of \textit{generators} of $M$ if the only submodule of $M$ containing $S$ is $M$ itself. The $\tC$-module $M$ is said to be \textit{finitely generated} if it has a finite set of generators. We say that a graded $\tC$-module $M$ is \textit{generated in degree} $i$ if $\bigcup_{x\in \Ob(\tC)}M(x)_i$ is a set of generators of $M$.

\begin{notation}[Internal degree shift]
If $M$ is a graded $\tC$-module, and $j\in \Z$, we define the graded $\tC$-module $M\langle j \rangle$ by $M\langle j\rangle(x)_i = M(x)_{i-j}$ for all $x\in \Ob(\tC)$ and $i\in \Z$.
\end{notation}

\begin{definition}
We say that a graded $\tC$-module $M$ satisfies condition:

\begin{itemize}
\item[(L1)] if there exists $n\in\Z$ such that $M_i=0$ for all $i<n$;

\item[(L2)] if for each $i\in\Z$, the $\bk$-vector space $M_i$ is finite dimensional.
\end{itemize}
We define $\tC\lgmod$, the category of \emph{lower bounded $\tC$-modules}, as the full subcategory of $\tC\gmod$ consisting of all graded $\tC$-modules $M$ satisfying both conditions (L1) and (L2).
\end{definition}

We note that the categories $\tC\Module$, $\tC\gMod$, $\tC\gmod$, and $\tC\lgmod$ are abelian.

\subsection{Projective covers}

Recall that a positively graded $\bk$-linear category is a graded $\bk$-linear category satisfying conditions (P1)--(P6) in Section \ref{Conventions}.

\begin{remark} \label{mazorchuk-positively-graded}
Our notion of positively graded $\bk$-linear category differs from the one in \cite[Definition 1]{Mazorchuk}: we do not require that $\tC(x,x)_0 = \bk$ for all $x$, but we require that $\tC$ is generated in degrees 0 and 1.
\end{remark}

\begin{lemma}  \label{finitely many y}
Let $\tC$ be a positively graded $\bk$-linear category. Let $x\in \Ob(\tC)$ and $i\ge 0$. Then there are only finitely many objects $y$ such that $\tC(x,y)_i \neq 0$ or $\tC(y,x)_i \neq 0$.
\end{lemma}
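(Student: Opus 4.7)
The plan is to induct on $i$, leveraging conditions (P3), (P5), and (P6) in tandem. For the base case $i = 0$, condition (P3) says $\tC(x,y)_0 = 0$ whenever $x \neq y$, so the only object $y$ satisfying $\tC(x,y)_0 \neq 0$ or $\tC(y,x)_0 \neq 0$ is $y = x$ itself (and this uses (P4) at least to ensure $\tC(x,x)_0 \neq 0$, but we do not even need that for finiteness). The base case $i = 1$ is exactly condition (P5).

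For the inductive step, suppose the claim is known for a fixed $i \geqslant 1$, and consider $i+1$. By (P6), $\tC_{i+1} = \tC_1 \cdot \tC_i$. Evaluating on a hom-space, this means that for every pair $(a,b)$,
\begin{equation*}
\tC(a,b)_{i+1} = \sum_{z \in \Ob(\tC)} \tC(z,b)_1 \cdot \tC(a,z)_i.
\end{equation*}
Applied with $(a,b) = (x,y)$, nontriviality of $\tC(x,y)_{i+1}$ forces the existence of some intermediate object $z$ with $\tC(x,z)_i \neq 0$ and $\tC(z,y)_1 \neq 0$. By the inductive hypothesis there are only finitely many such $z$, and by (P5) each such $z$ admits only finitely many such $y$, so the set of $y$ with $\tC(x,y)_{i+1} \neq 0$ is finite. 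The dual statement for $\tC(y,x)_{i+1}$ is proved identically by applying the above identity with $(a,b) = (y,x)$: nonvanishing yields some $z$ with $\tC(z,x)_1 \neq 0$ and $\tC(y,z)_i \neq 0$, and one concludes by first using (P5) to bound the number of $z$ and then the inductive hypothesis to bound the number of $y$ for each $z$.

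The main (and only) subtlety is making sure one uses (P6) in both directions: for the set $\{y : \tC(x,y)_{i+1} \neq 0\}$ one peels off the last factor (a degree-$1$ arrow out of an intermediate $z$ reachable from $x$ in degree $i$), while for the set $\{y : \tC(y,x)_{i+1} \neq 0\}$ one peels off the first factor (a degree-$1$ arrow into an intermediate $z$ from which $x$ is reached in degree $i$), so that (P5) may be invoked at an object that is already known to lie in a finite set. No combinatorial difficulty remains beyond bookkeeping.
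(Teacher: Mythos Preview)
Your proof is correct and follows essentially the same approach as the paper: both arguments reduce to the base cases via (P3) and (P5) and then use (P6) to factor higher-degree morphisms through degree-$1$ pieces. The only cosmetic difference is that the paper fully expands $\tC(x,y)_i$ as an iterated product of degree-$1$ spaces in one step, whereas you peel off a single degree-$1$ factor and induct; the logical content is identical.
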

\begin{proof}
For $i=0$ or $1$, this follows from conditions (P3) and (P5). Suppose $i\ge 2$.
By condition (P6), one has
\begin{equation*}
\tC(x,y)_i = \sum_{z_2,\ldots, z_{i}\in \Ob(\tC)} \tC(z_i , y)_1 \cdots \tC(z_2,z_3)_1\cdot \tC(x,z_2)_1 .
\end{equation*}
It follows by condition (P5) that there are only finitely many $y$ such that $\tC(x,y)_i\neq 0$. Similarly, there are only finitely many $y$ such that $\tC(y,x)\neq 0$.
\end{proof}

\begin{lemma}
Let $\tC$ be a positively graded $\bk$-linear category. Then, for any $x\in \Ob(\tC)$, the $\tC$-module $\C(x,-)$ is lower bounded.
\end{lemma}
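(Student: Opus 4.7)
The plan is to verify directly that $M := \tC(x,-)$ satisfies both conditions (L1) and (L2) defining a lower bounded module, invoking exactly the axioms (P1)--(P6) and the immediately preceding lemma.

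First I would unwind the definitions: by construction, $M(y)_i = \tC(x,y)_i$, so
\begin{equation*}
M_i = \bigoplus_{y\in \Ob(\tC)} \tC(x,y)_i.
\end{equation*}
For (L1), axiom (P2) asserts $\tC(x,y)_i = 0$ for every $y$ whenever $i<0$, so $M_i = 0$ for all $i<0$, and one may take $n=0$.

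For (L2), fix $i\geqslant 0$. Lemma \ref{finitely many y} (which I am free to use, as it is proved above) tells me that there are only finitely many objects $y\in \Ob(\tC)$ with $\tC(x,y)_i \neq 0$. For each such $y$, axiom (P1) guarantees that $\tC(x,y)$ is finite dimensional, hence so is its degree-$i$ component $\tC(x,y)_i$. Therefore $M_i$ is a finite direct sum of finite dimensional $k$-vector spaces, and thus finite dimensional.

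There is no real obstacle here; the statement is essentially a bookkeeping consequence of (P1), (P2), and the finiteness Lemma \ref{finitely many y}. The only point worth flagging is that (L2) genuinely requires Lemma \ref{finitely many y} and cannot be obtained from (P1) alone, since (P1) only controls each individual hom space and not the sum over $y$.
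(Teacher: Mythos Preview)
Your proof is correct and follows exactly the paper's approach: condition (L1) from axiom (P2), and condition (L2) from Lemma~\ref{finitely many y} together with axiom (P1). The paper's own proof is simply a terser version of what you wrote.
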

\begin{proof}
It is clear, by condition (P2), that $\tC(x,-)$ satisfies condition (L1). By condition (P1) and Lemma \ref{finitely many y}, it also satisfies condition (L2).
\end{proof}

The following corollary is immediate.
\begin{corollary}
Let $\tC$ be a positively graded $\bk$-linear category. If $M$ is a finitely generated graded $\tC$-module, then $M$ is lower bounded.
\end{corollary}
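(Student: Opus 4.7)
The plan is to realize $M$ as a quotient of a finite direct sum of shifted representable modules and then invoke the preceding lemma, since (L1) and (L2) are clearly stable under the relevant operations.

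First I would reduce to the case of a finite set of \emph{homogeneous} generators. Any element of a graded $k$-vector space decomposes as a finite sum of its homogeneous components, so given any finite generating set for $M$, replacing each element by its (finitely many) nonzero homogeneous components yields another finite generating set, this time consisting of homogeneous elements. Call these $m_1,\ldots,m_n$ with $m_j \in M(x_j)_{d_j}$ for some $x_j \in \Ob(\tC)$ and $d_j \in \Z$.

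Next, the universal property of the representable functor $\tC(x_j,-)$ (combined with the internal shift $\langle\,\cdot\,\rangle$) produces a surjection of graded $\tC$-modules
$$
\pi : \bigoplus_{j=1}^n \tC(x_j,-)\langle d_j\rangle \twoheadrightarrow M,
$$
sending $1_{x_j}$ (placed in degree $d_j$ after the shift) to $m_j$. By the preceding lemma, each $\tC(x_j,-)$ is lower bounded, i.e. satisfies (L1) and (L2); both conditions are trivially preserved by internal degree shifts and by finite direct sums, so the source of $\pi$ is lower bounded. Finally, both conditions pass to quotient modules: (L1) is immediate, and (L2) follows because a quotient of a finite-dimensional vector space is finite-dimensional (applied degree by degree). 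Hence $M$ is lower bounded.

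There is really no serious obstacle here; the only mildly substantive point is the passage to a finite \emph{homogeneous} generating set, which is routine once one recalls that an element of a graded vector space has only finitely many nonzero homogeneous components. Everything else reduces to the preceding lemma and formal stability properties of (L1) and (L2).
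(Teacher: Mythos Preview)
Your proof is correct and is precisely the ``immediate'' argument the paper has in mind: express $M$ as a quotient of a finite direct sum of shifted representables $\tC(x_j,-)\langle d_j\rangle$, apply the preceding lemma to each summand, and use that (L1) and (L2) are preserved under shifts, finite direct sums, and quotients. The reduction to homogeneous generators is the only detail worth making explicit, and you handle it correctly.
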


For any $x\in \Ob(\tC)$, the graded $\tC$-module $\tC(x,-)$ is a projective object in $\tC\gmod$. It follows by condition (P4) that if $W$ is any (ungraded) finite dimensional $\tC(x,x)_0$-module, the graded $\tC$-module $\tC(x,-)\otimes_{\tC(x,x)_0}W$ is also a projective object in $\tC\gmod$.

The following fundamental result is proved in the same way as \cite[Lemma 6(a)]{Mazorchuk}.

\begin{proposition} \label{projective-cover}
Let $\tC$ be a positively graded $\bk$-linear category. Let $M$ be a locally finite $\tC$-module. Suppose that for some $n\in\Z$, one has $M_i=0$ for all $i<n$. Then $M$ has a projective cover $\pi: P\to M$ in the category $\tC\gmod$. Moreover, $P_i=0$ for all $i<n$, and $\pi: P_n \to M_n$ is bijective. If $M$ is generated in degree $n$, then $P$ is generated in degree $n$. If $M$ is lower bounded, then $P$ is lower bounded.
\end{proposition}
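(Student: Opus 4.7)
The plan is to mimic the standard projective-cover construction for positively graded algebras, while handling the finiteness bookkeeping allowed by conditions (P1)--(P6). First, form the ``top'' $T = M / (\tC_{\geqslant 1} \cdot M)$, where $\tC_{\geqslant 1} = \bigoplus_{i \geqslant 1} \tC_i$. Condition (P3) makes $T$ a graded module over $\tC_0 = \bigoplus_{x} \tC(x,x)_0$, which is a product of semisimple algebras by (P4). Because $M_i = 0$ for $i < n$, we have $(\tC_{\geqslant 1} M)_n = 0$, hence $T_n = M_n$ and $T_i = 0$ for $i < n$; local finiteness of $M$ passes to $T$.

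Next, for each $x \in \Ob(\tC)$ and each $j \geqslant n$ choose a finite-dimensional $\tC(x,x)_0$-module $W(x,j)$ isomorphic to $T(x)_j$, and set
$$ P = \bigoplus_{x,j} \tC(x,-) \otimes_{\tC(x,x)_0} W(x,j) \langle j \rangle. $$
Each summand is a projective object of $\tC\gmod$. A direct degree count, using Lemma \ref{finitely many y}, condition (P1), and local finiteness of $T$, shows that each $P_i$ is finite dimensional (only indices $n \leqslant j \leqslant i$ contribute), so $P \in \tC\gmod$. Since $\tC(x,x)_0$ is semisimple, each surjection $M(x)_j \twoheadrightarrow T(x)_j$ admits a $\tC(x,x)_0$-linear section; combining these sections with the chosen isomorphisms $W(x,j) \cong T(x)_j$ and passing to the adjoint produces a graded $\tC$-module homomorphism $\pi : P \to M$.

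The crucial step is verifying that $\pi$ is an essential epimorphism. By construction $\pi$ induces an isomorphism $P/(\tC_{\geqslant 1} P) \to T$ on tops. For surjectivity, set $Q = M/\pi(P)$; then $Q = \tC_{\geqslant 1} Q$ and $Q_i = 0$ for $i < n$, whereupon an induction on $k$ using $Q_{n+k} = \sum_{i \geqslant 1} \tC_i \cdot Q_{n+k-i}$ (with condition (P6) ensuring $\tC_{\geqslant 1}$ is generated out of $\tC_1$) gives $Q = 0$. Essentiality is the same Nakayama-style argument applied to $P/K$ for a graded submodule $K \subseteq P$ with $\pi(K) = M$: indeed $\ker(\pi) \subseteq \tC_{\geqslant 1} P$ because $\ker(\pi)$ is killed by the isomorphism on tops, so $P = K + \tC_{\geqslant 1} P$ and the induction yields $K = P$. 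The remaining assertions are now immediate: $P_i = 0$ for $i < n$ since $T_i = 0$ there; $P_n \to M_n$ is bijective because both sides equal $T_n$; if $M = \tC \cdot M_n$ then $T$ is concentrated in degree $n$ and the sum defining $P$ runs only over $j = n$; and lower boundedness of $P$ is precisely the finite-dimensionality already verified.

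The main obstacle is ensuring the infinite direct sum defining $P$ is locally finite. This is exactly where conditions (P5) and (P6) (through Lemma \ref{finitely many y}) enter; without them the construction would only land in $\tC\gMod$, and the degreewise Nakayama propagation used above would fail because infinitely many fresh generators could accumulate in a single degree.
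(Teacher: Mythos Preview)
Your argument is correct and takes a genuinely different route from the paper's. The paper first builds a (possibly non-minimal) projective module
\[
V = \bigoplus_{x,i} \bigl(\tC(x,-) \otimes_{\tC(x,x)_0} M(x)_i\bigr)\langle i \rangle
\]
mapping onto $M$, verifies $V \in \tC\gmod$, and then invokes Zorn's lemma to extract a submodule $P \subseteq V$ minimal with $\pi(P)=M$; the projective-cover property then follows from minimality by a standard argument. You instead construct $P$ directly from the top $T = M/\tC_{\geqslant 1}M$ and establish essentiality via a degreewise Nakayama induction. Your approach gives an explicit description of $P$ (so the claims about $P_n \to M_n$ and degree-$n$ generation drop out immediately), at the cost of having to check that $\ker\pi \subseteq \tC_{\geqslant 1}P$; the paper's approach trades that check for an appeal to Zorn.

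One small slip to fix: in your second paragraph you assert that each $P_i$ is finite dimensional, but from local finiteness of $T$ alone (each $T(x)_j$ finite dimensional) you can only conclude that each $P(y)_i$ is finite dimensional, since Lemma~\ref{finitely many y} bounds the $x$'s for fixed $y$. This weaker statement is exactly what $P \in \tC\gmod$ requires, so the argument there is fine. However, your final sentence (``lower boundedness of $P$ is precisely the finite-dimensionality already verified'') then needs one more line: when $M$ satisfies (L2), so does its quotient $T$, hence for each $j$ only finitely many $x$ have $W(x,j)\neq 0$; now Lemma~\ref{finitely many y} bounds the $y$'s as well, giving $\dim_k P_i < \infty$. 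This is exactly the extra step the paper spells out in its last paragraph.
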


\begin{proof}
By applying internal degree shift if necessary, one may assume that $n=0$. We claim that there is an epimorphism $\pi: V \to M$ in $\tC\gmod$ where $V$ is projective and $V_i=0$ for all $i<0$.

For each $x\in \Ob(\tC)$ and $i\ge 0$, we have the natural homomorphism
\begin{equation*}
(\tC(x,-)\otimes_{\tC(x,x)_0} M(x)_i )\langle i\rangle \longrightarrow M
\end{equation*}
whose image is the submodule of $M$ generated by $M(x)_i$. Taking direct sum, we obtain an epimorphism $\pi: V \to M$, where
\begin{equation*}
V = \bigoplus_{x\in \Ob(\tC),\, i\in\Z_+}  (\tC(x,-)\otimes_{\tC(x,x)_0} M(x)_i )\langle i\rangle .
\end{equation*}
Let $y\in\Ob(\tC)$ and $j\in\Z$. We have
\begin{equation*}
V(y)_j = \bigoplus_{x\in \Ob(\tC),\, 0\le i\le j} \tC(x,y)_{j-i} \otimes M(x)_i .
\end{equation*}
It follows by Lemma \ref{finitely many y} and condition (P1) that $\dim_{\bk} V(y)_j < \infty$. Hence, $V$ is a projective object in $\tC\gmod$, and $V_i=0$ for all $i<0$.

By Zorn's lemma, there exists a submodule $P$ of $V$ which is minimal with respect to the property that $\pi(P)=M$. It follows by a standard argument (see the proof of \cite[Proposition 1]{Sh}) that $P$ is a projective cover of $M$ in $\tC\gmod$. Moreover, $P_i = 0$ for all $i<0$. Since $\pi: V_0\to M_0$ is bijective, we must have $P_0=V_0$ and $\pi: P_0\to M_0$ is bijective. If $M$ is generated in degree $0$, then $\pi$ maps the submodule $P'$ of $P$ generated by $P_0$ surjectively onto $M$, so by minimality of $P$ we must have $P'=P$.

Suppose now that $M$ is lower bounded. Let $j\in \Z$. We have to show that $\dim_{\bk} P_j < \infty$. It suffices to show that $\dim_{\bk} V_j < \infty$. One has
\begin{equation*}
 V_j = \bigoplus_{i=0}^j \left( \bigoplus_{x,y\in \Ob(\tC)}  \tC(x,y)_{j-i} \otimes M(x)_i \right).
\end{equation*}
For each $i\le j$, there are only finitely many objects $x$ such that $M(x)_i\neq 0$, hence the result follows by Lemma \ref{finitely many y} and condition (P1).
\end{proof}

\begin{remark}
The category $\tC\gmod$ may not have enough projectives, see for example \cite[(1.3)]{Mazorchuk}.
\end{remark}

\begin{definition}
Let $\tC$ be a positively graded $\bk$-linear category and $M$ a locally finite $\tC$-module satisfying condition (L1). We define the \emph{syzygy} $\Omega M$ to be the kernel of the projective cover $P\to M$ of $M$ in $\tC\lgmod$.
\end{definition}

Since projective covers are unique up to isomorphism, one has:

\begin{corollary}
Let $\tC$ be a positively graded $\bk$-linear category and $M$ a locally finite $\tC$-module satisfying condition (L1). Then $M$ has a minimal projective resolution in $\tC\gmod$, and it is unique up to isomorphism. In particular, the syzygies $\Omega^i M$ for all $i\geqslant 0$ are well defined up to isomorphism in $\tC\gmod$. Moreover, if $M\in\tC\lgmod$, then its minimal projective resolution and $\Omega^i M$ for all $i\geqslant 0$ are also in $\tC\lgmod$.
\end{corollary}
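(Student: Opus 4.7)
The plan is to iterate Proposition \ref{projective-cover} and invoke the general fact that projective covers in an abelian category are unique up to isomorphism.

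First, I would observe that Proposition \ref{projective-cover} applies to $M$, yielding a projective cover $\pi_0: P^0 \to M$ in $\tC\gmod$ with $P^0_i = 0$ for $i < n$ and $\pi_0: P^0_n \to M_n$ bijective (where $n$ is the bound from (L1) for $M$). Setting $\Omega M := \ker \pi_0$, the kernel lies in $\tC\gmod$ (it is a subobject of $P^0$), and by comparing degrees $(\Omega M)_i = 0$ for $i \leqslant n$, so $\Omega M$ again satisfies (L1). Hence Proposition \ref{projective-cover} applies again, producing $\pi_1: P^1 \to \Omega M$. Iterating, set $\Omega^0 M := M$ and inductively $\Omega^{i+1} M := \ker \pi_{i+1}$, where $\pi_{i+1}: P^{i+1} \to \Omega^i M$ is the projective cover. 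Splicing the short exact sequences $0 \to \Omega^{i+1} M \to P^{i+1} \to \Omega^i M \to 0$ gives a projective resolution
\begin{equation*}
  \cdots \longrightarrow P^{i+1} \longrightarrow P^i \longrightarrow \cdots \longrightarrow P^0 \longrightarrow M \longrightarrow 0
\end{equation*}
in $\tC\gmod$, and this resolution is minimal in the standard sense that each map onto a syzygy is a projective cover.

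For uniqueness, I would invoke the standard fact that a projective cover, when it exists, is unique up to (non-canonical) isomorphism; applying this recursively shows that each $P^i$ and each $\Omega^i M$ is determined up to isomorphism in $\tC\gmod$, and the whole minimal projective resolution is unique up to isomorphism of complexes.

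Finally, for the lower bounded case, suppose $M \in \tC\lgmod$. The proposition already gives $P^0 \in \tC\lgmod$. Since $\Omega M$ is a graded submodule of $P^0$, it inherits both (L1) (as observed above) and (L2) (subspaces of finite-dimensional spaces are finite-dimensional), so $\Omega M \in \tC\lgmod$. Induction on $i$ then gives $\Omega^i M \in \tC\lgmod$ and $P^i \in \tC\lgmod$ for all $i \geqslant 0$.

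There is no real obstacle here: the statement is a purely formal consequence of Proposition \ref{projective-cover} together with uniqueness of projective covers. The only point requiring a brief check is that (L1) and local finiteness pass to syzygies, so that the proposition can be iterated.
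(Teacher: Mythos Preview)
Your proposal is correct and follows exactly the approach the paper intends: the paper's entire proof is the one-line remark ``Since projective covers are unique up to isomorphism, one has:'' preceding the corollary, and you have simply spelled out the iteration of Proposition~\ref{projective-cover} and the passage of (L1) and (L2) to syzygies that this remark leaves implicit. (There is a harmless indexing slip where your $\pi_0$ becomes $\pi_1$ when you restate the iteration formally, but the mathematics is unaffected.)
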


Let us recall the definitions of Koszul module and Koszul category.

\begin{definition}
Let $\tC$ be a positively graded $\bk$-linear category. A graded module $M \in \tC \lgmod$ is \textit{Koszul} if it has a \textit{linear} projective resolution
\begin{equation*}
\ldots \to P^{-n} \to P^{-(n-1)} \to \ldots \to P^{-1} \to P^0 \twoheadrightarrow M;
\end{equation*}
that is, $P^{-i}$ is generated in degree $i$ for all $i \geqslant 0$. The category $\tC$ is said to be a \emph{Koszul category} if for every $x \in \Ob \tC$, $\tC (x, x)_0$ (considered as a graded $\tC$-module concentrated in degree 0) is a Koszul module.
\end{definition}

\begin{remark} \normalfont
It follows from Proposition \ref{projective-cover} that a graded module $M \in \tC \lgmod$ is Koszul if and only if for every $i \geqslant 0$, the syzygy $\Omega^i M$ is generated in degree $i$.
\end{remark}

\section{Koszul theory for directed graded $\bk$-linear categories}  \label{Koszul theory for graded directed categories}

There is an extensive literature on Koszul theory and we refer the reader to \cite{Mazorchuk} for a list of some of them. The purpose of this section is to study Koszul property in the special case of a directed graded $\bk$-linear category.

\subsection{Ideals and coideals}
Recall that a directed graded $\bk$-linear category is a directed positively graded $\bk$-linear category satisfying conditions (P7) and (P8).

Let $\tC$ be a directed graded $\bk$-linear category. Note that by conditions (P3) and (P7), $\tC$ is skeletal.

We call $x \in \Ob(\tC)$ a \textit{minimal} object if for any $y \in \Ob(\tC)$, one has either $x \leqslant y$ or $x$ and $y$ are incomparable. Dually, we define \emph{maximal} objects. A full subcategory $\tD$ is an \emph{ideal} of $\tC$ if for every $x \in \Ob(\tD)$ and every $y \in \Ob(\tC)$ with $y \leqslant x$, one has $y \in \Ob(\tD)$. Dually, we define \emph{coideals} of $\tC$. For a full subcategory $\tD$ of $\tC$, the full subcategory of $\tC$ consisting of all objects in $\Ob(\tC) \setminus \Ob (\tD)$ is called the \emph{complement} of $\tD$ and is denoted by $\tD^c$.

\begin{lemma} \label{ideal and coideal}
Let $\tC$ be a directed graded $\bk$-linear category. Then:
\begin{enumerate}
\item The complement of an ideal is a coideal; dually, the complement of a coideal is an ideal.
\item The intersection of a convex subcategory and an ideal is still convex; dually, the intersection of a convex subcategory and a coideal is still convex.
\end{enumerate}
\end{lemma}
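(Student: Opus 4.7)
The plan is to verify both statements by direct chase through the defining conditions of ideals, coideals, and convex subcategories. Each part reduces to a short contrapositive or a two-step membership argument, so the main task is to organize the four dual statements cleanly rather than to overcome any real obstacle.

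For part (1), I would argue the first half by contrapositive. Given an ideal $\tD$, to show $\tD^c$ is a coideal, I would take $x \in \Ob(\tD^c)$ and $y \in \Ob(\tC)$ with $x \leqslant y$, and show $y \in \Ob(\tD^c)$. If instead $y \in \Ob(\tD)$, then the ideal property of $\tD$ applied to $x \leqslant y$ would force $x \in \Ob(\tD)$, contradicting $x \in \Ob(\tD^c)$. The dual statement is obtained by interchanging the roles of $\leqslant$ and $\geqslant$ throughout.

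For part (2), I would fix a convex full subcategory $\tE$ and an ideal $\tD$, and verify directly that $\tE \cap \tD$ satisfies the convexity condition. Given $x, y \in \Ob(\tE \cap \tD)$ and $z \in \Ob(\tC)$ with $x \leqslant z \leqslant y$, convexity of $\tE$ applied to $x, y \in \Ob(\tE)$ yields $z \in \Ob(\tE)$, while the ideal property of $\tD$ applied to $z \leqslant y$ with $y \in \Ob(\tD)$ yields $z \in \Ob(\tD)$. The coideal version is entirely analogous, using instead $x \in \Ob(\tD)$ together with $x \leqslant z$.

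There is no real obstacle; the only thing worth noting is that neither half uses conditions (P7) or (P8), nor even the grading — these are purely order-theoretic statements about a poset together with a distinguished collection of down-sets and up-sets, and the lemma is recorded here simply to fix terminology used in subsequent arguments.
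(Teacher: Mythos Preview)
Your proposal is correct and follows essentially the same argument as the paper's proof: both parts are handled by the same direct contrapositive and two-step membership checks, differing only in variable names. Your closing observation that the argument is purely order-theoretic and uses none of the graded structure is also accurate.
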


\begin{proof}
(1) Let $\tD$ be an ideal of $\tC$. Take $x \in \Ob (\tD^c)$ and consider any $y \in \Ob (\tC)$ such that $x \leqslant y$. By definition, if $y \notin \Ob(\tD^c)$, then $y \in \Ob (\tD)$. But since $\tD$ is an ideal, every object less than or equal to $y$ must be contained in $\Ob (\tD)$. In particular, $x \in \Ob (\tD)$, a contradiction. Hence, $\tD^c$ is a coideal. The second statement is dual.

(2) Let $\tE$ be a convex subcategory and let $\tD$ be an ideal. Let $x, z \in \Ob (\tD) \cap \Ob (\tE)$. Take any $y \in \Ob(\tC)$ such that $x \leqslant y \leqslant z$. Since $\tE$ is convex, one has $y \in \Ob (\tE)$. Since $y \leqslant z$, $z \in \Ob \tD$, and $\tD$ is an ideal, we have $y \in \Ob (\tD)$. Therefore, $y \in \Ob (\tD )\cap \Ob (\tE)$. Hence, the intersection of $\tD$ and $\tE$ is convex. The second statement is dual.
\end{proof}

\begin{remark} \normalfont
Actually, as the reader may see, the above definitions and lemma do not involve the $\bk$-linear structure of the category. That is, they hold for arbitrary small categories whose underlying sets are equipped with a poset structure.
\end{remark}

Now suppose we have an ideal $\tD$ of a directed graded $\bk$-linear category $\tC$. Let $\tE=\tD^c$, and denote by $i :\tE \to \tC$ and $j: \tD \to \tC$ the inclusion functors. We have the restriction functors
\begin{gather*}
i^! : \tC\gMod \to \tE\gMod, \quad M \mapsto M\circ i , \\
j^* : \tC\gMod \to \tD\gMod, \quad M \mapsto M\circ j .
\end{gather*}
We also have the lifting functors
\begin{gather*}
i_* : \tE\gMod \to \tC\gMod \quad\mbox{ and }\quad j_* : \tD\gMod \to \tC\gMod
\end{gather*}
which regard a graded $\tE$-module or a graded $\tD$-module as a graded $\tC$-module in the obvious way. That is, for a graded $\tE$-module $M$ (respectively, a graded $\tD$-module $N$), the value of $i_*M$ (respectively, $j_*N$) on an object $x$ is $M(x)$ (respectively, $N(x)$) whenever $x$ is an object in $\tE$ (respectively, $\tD$), and is 0 otherwise. By the directed structure of $\tC$, $i_*M$ and $j_*N$ are $\tC$-modules. Furtheremore, it is clear that for any $\tE$-module $M$ and any $\tD$-module $N$, one has isomorphisms
$i^! i_* M \cong M$ and $j^*j_* N \cong N$.

\begin{lemma} \label{property of recollement}
One has:
\begin{enumerate}
\item $i_{\ast}$, $i^!$, $j^{\ast}$, and $j_{\ast}$ are exact functors.
\item $i^!$ is a right adjoint functor to $i_*$, while $j^*$ is a left adjoint functor to $j_*$.
\item $i_{\ast}$ and $j^{\ast}$ preserve projective modules.
\end{enumerate}
\end{lemma}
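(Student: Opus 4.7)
The plan is to exploit the concrete descriptions of the four functors and then reduce part (2) to an abstract adjunction argument. For part (1), the pullbacks $i^!$ and $j^*$ are literal restrictions of a graded $\tC$-module to the full subcategories $\tE$ or $\tD$; since exactness of a sequence of graded modules is tested objectwise and nothing happens to the values at objects of $\tE$ or $\tD$, both of these functors are exact with no further argument.

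For the pushforwards $i_*$ and $j_*$ I would first make the ``obvious'' extension explicit, because this is the one place the ideal/coideal hypothesis is actually used. For a graded $\tE$-module $N$, set $(i_*N)(x) = N(x)$ when $x \in \Ob(\tE)$ and $(i_*N)(x) = 0$ otherwise, and send a morphism $\alpha : x \to y$ of $\tC$ to $N(\alpha)$ when both endpoints lie in $\tE$ and to $0$ otherwise. The only way this could fail to be functorial is if some nonzero morphism were forced to land in a zero summand where the composition relations become inconsistent; but $\tC$ is directed, so $\tC(x,y) \neq 0$ forces $x \leqslant y$, and upward closure of the coideal $\tE$ then guarantees $y \in \Ob(\tE)$ whenever $x \in \Ob(\tE)$, so no such configuration arises. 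The analogous argument with downward closure handles $j_*$. With the definitions in place, exactness is objectwise once again: at each $x \in \Ob(\tC)$ one either reproduces the original short exact sequence of the restricted module or produces the trivially exact sequence $0 \to 0 \to 0 \to 0 \to 0$.

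For part (2), rather than building projective preimages by hand I would simply invoke the adjunctions already recorded in the paragraph preceding the lemma: $i_* \dashv i^!$ and $j^* \dashv j_*$. The standard fact that a left adjoint to an exact functor preserves projectives (because the adjunction isomorphism turns out-lifting against an epi into $\Hom(P, -)$ of an epi, via exactness of the right adjoint) then immediately gives, using part (1), that $i_*$ and $j^*$ preserve projectives. I anticipate no genuine obstacle here — the paper itself flags the lemma as trivial — and the only place where the directedness and ideal/coideal hypotheses do any real work is the functoriality check for the two zero-extension pushforwards.
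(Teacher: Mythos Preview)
Your proposal is correct. The paper itself gives no proof at all, simply declaring the lemma ``trivial,'' so your argument is precisely the sort of standard unpacking the authors had in mind: objectwise exactness for all four functors (with the ideal/coideal hypothesis ensuring the zero-extensions are well defined), and then the adjunctions $i_* \dashv i^!$ and $j^* \dashv j_*$ recorded just before the lemma to conclude that the two left adjoints preserve projectives.
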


\begin{proof}
Part (1) is clear by the definitions of these functors. To show part (2), note that the directed structure of $\tC$ gives rise to a triangular matrix structure $(\tD, \tE, B)$, where we regard $\tD$ and $\tE$ as (non-unital) $\bk$-algebras, and
\begin{equation*}
B = \bigoplus _{x \in \tD, \, y \in \tE} \tC (x, y).
\end{equation*}
is a $(\tD, \tE)$-bimodule; see \cite{Li3}. Therefore, by a simple modification\footnote{The only trouble is that $\tC$ as a $\bk$-algebra has no identity. However, it still has a complete set of orthogonal idempotents.}  of \cite[Example 3.2 and Proposition 3.3]{Li3}, one can obtain a recollement of module categories, and the conclusion of part (2) follows from standard facts on recollements. Part (3) immediately follows from (1) and (2) since these two functors have exact right adjoint functors.
\end{proof}

\subsection{Restrictions to subcategories}

Let $\tC$ be a directed graded $\bk$-linear category.

Suppose $\tD$ is a graded $\bk$-linear subcategory of $\tC$ and $\iota: \tD \to \tC$ is the inclusion functor.  Then there is a pullback functor $\iota^{\ast}: \tC \gMod \to \tD \gMod$, sending $M \in \tC \gMod$ to $M \circ \iota$. The functor $\iota^{\ast}$ restricts to a functor $\tC \lgmod \to \tD \lgmod$. When $\tD$ is a subcategory of $\tC$ and $\iota$ is the inclusion functor, we denote $\iota^{\ast}$ by $\trest$.

The following proposition establishes a bridge between the Koszul property of directed graded $\bk$-linear categories with infinitely many objects and the Koszul property of directed graded $\bk$-linear categories with only finitely many objects.

\begin{proposition} \label{restriction of Koszul modules}
Let $\tC$ be a directed graded $\bk$-linear category. Let $M \in \tC \lgmod$. Then the following statements are equivalent:
\begin{enumerate}
\item $M$ is a Koszul $\tC$-module;
\item $M \trest$ is a Koszul $\tD$-module for every ideal $\tD$ of $\tC$;
\item $M \downarrow _{\tV} ^{\tC}$ is a Koszul $\tV$-module for every convex full subcategory $\tV$ containing $\supp (M_0)$.
\item $M \downarrow _{\tV} ^{\tC}$ is a Koszul $\tV$-module for every finite convex full subcategory $\tV$ containing $\supp(M_0)$.
\end{enumerate}
\end{proposition}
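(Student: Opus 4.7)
The strategy is to establish the cycle of implications $(1)\Rightarrow(2)\Rightarrow(3)\Rightarrow(4)\Rightarrow(1)$, in which $(3)\Rightarrow(4)$ is immediate, and in which the technical heart is that the pullback along the inclusion of an ideal or of a coideal is exact (by Lemma~\ref{property of recollement}) and preserves linear projective resolutions, provided the supports of the generators lie in the correct place.

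For $(1)\Rightarrow(2)$, I will apply the pullback $j^{\ast}$ along the ideal inclusion $j\colon\tD\hookrightarrow\tC$ to a linear projective resolution of $M$. The observation that $j^{\ast}\tC(x,-)$ equals $\tD(x,-)$ for $x\in\Ob(\tD)$ and vanishes otherwise (since $\tC(x,y)\neq 0$ forces $x\leqslant y$, and $\tD$ is downward closed) immediately implies that the resolution restricts to a linear projective resolution of $M\trest$ over $\tD$. For $(2)\Rightarrow(3)$, I take $\tD$ to be the downward closure of the given convex $\tV$ in $\tC$, i.e.\ the smallest ideal containing $\tV$; convexity of $\tV$ in $\tC$ then forces $\tV$ to be a coideal of $\tD$. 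By (2), $M\trest$ is Koszul over $\tD$, so it remains to establish the sub-lemma: if $N$ is Koszul over a directed graded category $\tE$ and $\tW$ is a coideal of $\tE$ with $\supp(N_{0})\subseteq\tW$, then $N\downarrow_{\tW}^{\tE}$ is Koszul over $\tW$. The sub-lemma is proved by induction on $i$: if the generators of $Q^{-i}$ lie in $\tW$, then coideality of $\tW$ forces $\supp(Q^{-i})\subseteq\tW$, hence $\supp(\Omega^{i+1}N)\subseteq\tW$, and so the generators of $Q^{-(i+1)}$ lie in $\tW$ as well. Because $\tW$ is a full subcategory of $\tE$, each $Q^{-i}\downarrow_{\tW}^{\tE}$ is then projective over $\tW$ and generated in degree $i$, and exactness of restriction yields the required linear projective resolution of $N\downarrow_{\tW}^{\tE}$.

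For $(4)\Rightarrow(1)$, I will show by induction on $i$ that $\Omega^{i}M$ is generated in degree $i$. The base case $i=0$ reduces, for each $y$ at which one wishes to express $M(y)$ in terms of $M_{0}$, to Koszulity of $M\downarrow_{\tV}^{\tC}$ with $\tV$ the convex hull of $\supp(M_{0})\cup\{y\}$; this hull is finite by (P8). For the inductive step, assume $\Omega^{k}M$ is generated in degree $k$ for all $k<i$, fix $m\in(\Omega^{i}M)(x)_{j}$ with $j>i$, and enlarge the convex hull to
\[
\tV \;=\; \text{convex hull of } \supp(M_{0})\cup\{x\}\cup S_{0}\cup S_{1}\cup\cdots\cup S_{i-1},
\]
where $S_{k}=\supp((\Omega^{k}M)_{k})$ is finite because $\Omega^{k}M\in\tC\lgmod$, and $\tV$ is finite by (P8). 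The inductive hypothesis guarantees that each $P^{-k}$ with $k\leqslant i-1$ is generated in degree $k$ at objects in $S_{k}\subseteq\tV$, so $P^{-k}\downarrow_{\tV}^{\tC}$ is projective over $\tV$ and serves as the minimal projective cover of $\Omega^{k}(M\downarrow_{\tV}^{\tC})$; iterating gives $\Omega^{k}(M\downarrow_{\tV}^{\tC})=(\Omega^{k}M)\downarrow_{\tV}^{\tC}$ for every $k\leqslant i$. Koszulity of $M\downarrow_{\tV}^{\tC}$ from (4) now writes $m$ as a $\tV$-linear combination of degree-$i$ elements of $\Omega^{i}(M\downarrow_{\tV}^{\tC})$, which lifts to a $\tC$-linear combination in $\Omega^{i}M$ since $\tV$ is full in $\tC$.

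The main obstacle I expect is the bookkeeping in $(4)\Rightarrow(1)$: arranging a single finite convex $\tV$ on which restriction commutes with the syzygy operation through level $i$. This is precisely where both condition (P8) and the finiteness of each $S_{k}$ (guaranteed by $M\in\tC\lgmod$) are essential; without them the interaction of the resolution with the support of $m$ could not be confined to a single finite convex subcategory.
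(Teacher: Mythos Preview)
Your proof is correct and follows the same cycle $(1)\Rightarrow(2)\Rightarrow(3)\Rightarrow(4)\Rightarrow(1)$ as the paper, but the organization differs in one notable way. The paper begins by replacing $\tC$ with the minimal coideal $\tE$ containing $\supp(M_0)$; after this reduction every convex $\tV\supseteq\supp(M_0)$ automatically becomes an \emph{ideal} of $\tE$, so both $(2)\Rightarrow(3)$ and $(4)\Rightarrow(1)$ reduce directly to the ideal case already handled by $j^{\ast}$ and Lemma~\ref{property of recollement}. You avoid this global reduction: for $(2)\Rightarrow(3)$ you pass through the downward closure $\tD$ of $\tV$ (an ideal of $\tC$) and then observe that $\tV$ is a coideal of $\tD$, proving the coideal sub-lemma separately; for $(4)\Rightarrow(1)$ you argue directly from fullness of $\tV$, using that $\tC(s,-)\downarrow_{\tV}^{\tC}=\tV(s,-)$ whenever $s\in\Ob(\tV)$, to see that each $P^{-k}\downarrow_{\tV}^{\tC}$ is again the minimal projective cover. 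The paper's single preliminary reduction makes the subsequent implications more uniform, while your approach is slightly more self-contained at each step; both rely on the same finiteness inputs ($M\in\tC\lgmod$ for the $S_k$, and condition (P8) for the convex hull).
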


\begin{proof}
First of all, if $M$ is not generated in degree 0, it is easy to see that (1)--(4) are false. Thus, we may assume that $M$ is generated in degree 0.

Let $\tE$ be the minimal coideal of $\tC$ containing $\supp (M_0)$. Explicitly, $\Ob(\tE)$ is the set of all $y \in \Ob (\tC)$ such that $x\leqslant y$ for some $x \in \Ob(\supp(M_0))$.

It is easy to see that $M$ and all its syzygies are supported on $\mathcal{E}$. In other words, given a linear projective resolution $\Pb \to M$, applying the functor $i^! \cong \downarrow _{\tE} ^{\tC}$, we get a linear projective resolution $i^! \Pb \to i^! M$. Conversely, given a linear projective resolution of $i^!M$, applying the functor $i_{\ast}$, we get a linear projective resolution of $i_{\ast} i^! M \cong M$. Consequently, $M$ is Koszul if and only if $M \downarrow _{\tE} ^{\tC}$ is Koszul. Moreover, by Lemma \ref{ideal and coideal}, $\tV \cap \tE$ is a finite convex full category of $\tE$ containing $\supp(M_0)$, and it is straightforward to check that $\tE \cap \tD$ is an ideal of $\tE$. Therefore, without loss of generality we may assume that $\tC = \tE$.

$(1) \Rightarrow (2)$: Since $M$ is a Koszul $\tC$-module, we can find a (minimal) linear projective resolution $\Pb \to M$. Applying the functor $j^{\ast}$ defined above to this resolution, which according to Lemma \ref{property of recollement} is exact and preserves projective modules, we obtain a linear projective resolution $j^{\ast} \Pb \to j^{\ast} M$. Note that $j^{\ast}$ is isomorphic to $\trest$. The conclusion follows.

$(2) \Rightarrow (3)$: Under the assumption that $\tC = \tE$, we claim that $\tV$ is actually an ideal of $\tC$. Indeed, by our definition $\tC = \tE$ has \emph{enough minimal objects}; that is, for every $y \in \Ob (\tC)$, there is a minimal object $x \in \Ob (\tC)$ such that $x \leqslant y$. Note that all these minimal objects are contained in $\supp(M_0)$, and hence contained in $\tV$. Therefore, if there are objects $u \in \Ob (\tC)$ and $v \in \Ob (\tV)$ with $u \leqslant v$, we can find a minimal object $w \in \Ob (\tC)$ with $w \leqslant u$, so $w \leqslant u \leqslant v$. But both $v$ and $w$ are contained in $\tV$, which is convex. Thus $u \in \Ob (\tV)$ as well. This proves our claim. Now the conclusion follows from the argument of (1).

$(3) \Rightarrow (4)$: Trivial.

$(4) \Rightarrow (1)$: If $M$ is not Koszul, we want to find a finite convex full subcategory $\tV$ containing $\supp(M_0)$ such that $M \downarrow _{\tV} ^{\tC}$ is not Koszul. Let $\Pb \to M$ be a minimal projective resolution and define
\begin{equation*}
s = \inf \{ i \in \Z_+ \mid \Omega^i M \text{ is not generated in degree } i \}.
\end{equation*}
This $s$ exists since $M$ is not Koszul. Therefore, we can find certain $t > s$ and $z \in \Ob (\tC)$ such that $(\Omega^s M)(z)_t \neq 0$, and it is not contained in the submodule generated by $\bigoplus _{i < t} (\Omega^s M)_i$.

Now define $\tV$ to be the minimal convex subcategory containing
\begin{equation*}
\supp(P^0 / \bigoplus _{i > t} P^0_i) \bigcup \supp(P^{-1} / \bigoplus _{i > t} P^{-1}_i) \bigcup \ldots \bigcup \supp (P^{-(s-1)} / \bigoplus _{i > t} P^{-(s-1)}_i).
\end{equation*}
This is a finite category by condition (P8) since the above set is a finite. Moreover, as explained before, $\tV$ is actually an ideal of $\tC$. Note that $z \in \Ob(\tV)$.

Again, applying $\downarrow _{\tV} ^{\tC} \cong j^{\ast}$ one has
\begin{equation*}
j^{\ast} P^{-(s-1)} \to \ldots \to j^{\ast} P^0 \twoheadrightarrow j^{\ast} M.
\end{equation*}
Note that for $0 \leqslant i \leqslant s-1$, by our construction of $\tV$ and recursion one can show that $\Omega ^i (j^{\ast} M)$ is generated in degree $i$ for $i \leqslant s-1$, so $j^{\ast} P^{-i}$ is a projective cover of $\Omega ^i (j^{\ast} M)$. Moreover, for $i \leqslant s$, one has $j^{\ast} \Omega^i M \cong \Omega^i (j^{\ast} M)$. However, $\Omega ^s (j^{\ast} M)$ is not generated in degree $s$ since
\begin{equation*}
(\Omega ^s (j^{\ast} M))(z)_t \cong ( j^{\ast} \Omega^s M)(z)_t = (\Omega^s M)(z)_t
\end{equation*}
is not contained in the $\tV$-submodule generated by $(j^{\ast} \Omega^s M)_s = (\Omega^s M)_s$. Consequently, $j^{\ast} M$ is not a Koszul $\tV$-module.
\end{proof}

Correspondingly, one has the following result which contains Theorem \ref{main-theorem-1}:

\begin{theorem} \label{restriction of Koszul categories}
Let $\tC$ be a directed graded $\bk$-linear category. Then the followings are equivalent:
\begin{enumerate}
\item $\tC$ is a Koszul category;
\item every ideal of $\tC$ is a Koszul category;
\item every coideal of $\tC$ is a Koszul category;
\item every convex full subcategory of $\tC$ is a Koszul category;
\item every finite convex full subcategory of $\tC$ is a Koszul category.
\end{enumerate}
\end{theorem}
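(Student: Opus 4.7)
The plan is to deduce this from Proposition \ref{restriction of Koszul modules} by applying the latter to each simple module $\tC(x,x)_0$, whose degree-$0$ support is just the singleton $\{x\}$, and then chasing implications in a small diagram. The main structural observation is that ideals, coideals, and finite convex full subcategories are all convex full subcategories (for coideals: if $y \le z$ lie in a coideal $\tE$ and $y \le w \le z$, then $w \ge y$ forces $w \in \tE$), so (4) will imply (2), (3), and (5) tautologically. The reverse implications (2)$\Rightarrow$(1), (3)$\Rightarrow$(1) come for free because $\tC$ is both an ideal and a coideal of itself.

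The two substantive steps are (1)$\Rightarrow$(4) and (5)$\Rightarrow$(1). For (1)$\Rightarrow$(4), fix a convex full subcategory $\tV$ and an object $x\in \Ob(\tV)$; I want to show the $\tV$-module $\tV(x,x)_0$ is Koszul. Note $\supp\bigl((\tC(x,x)_0)_0\bigr)=\{x\}\subseteq \Ob(\tV)$, so Proposition \ref{restriction of Koszul modules} (in the direction (1)$\Rightarrow$(3)) applied to $M=\tC(x,x)_0$ gives that $M\downarrow^{\tC}_{\tV}$ is a Koszul $\tV$-module. Since $\tV$ is a full subcategory containing $x$, one has $\tC(x,x)_0=\tV(x,x)_0$, and inspecting the functors confirms $M\downarrow^{\tC}_{\tV}=\tV(x,x)_0$ (concentrated in degree $0$), finishing this implication.

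For (5)$\Rightarrow$(1), fix $x\in \Ob(\tC)$ and set $M=\tC(x,x)_0$. Again $\supp(M_0)=\{x\}$. Every finite convex full subcategory $\tV$ of $\tC$ containing $x$ is, by hypothesis (5), a Koszul category, so in particular $\tV(x,x)_0$ is a Koszul $\tV$-module. As above, this module coincides with $M\downarrow^{\tC}_{\tV}$. The implication (4)$\Rightarrow$(1) of Proposition \ref{restriction of Koszul modules} then yields that $M$ is a Koszul $\tC$-module; ranging over $x$ shows $\tC$ is Koszul.

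There is no serious obstacle here: all of the hard work was already invested in Proposition \ref{restriction of Koszul modules}, and the only points requiring a bit of care are (a) the identification of the restriction of the simple $\tC$-module $\tC(x,x)_0$ with $\tV(x,x)_0$ whenever $\tV$ is a full subcategory containing $x$, and (b) checking that coideals are indeed convex so that (4) subsumes (3). After those checks, the argument amounts to closing the loop (1)$\Rightarrow$(4)$\Rightarrow$(2),(3),(5) and (2),(3),(5)$\Rightarrow$(1).
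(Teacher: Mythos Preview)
Your proof is correct and follows essentially the same strategy as the paper: both apply Proposition \ref{restriction of Koszul modules} to the simple modules $M=\tC(x,x)$ and use that $\tV(x,x)=\tC(x,x)\downarrow^{\tC}_{\tV}$ for any full subcategory $\tV$ containing $x$. The one small difference is in the handling of condition (3): the paper treats coideals by a separate direct argument (for a coideal $\tD$, the minimal projective resolution of $\tC(x,x)$ is supported on $\tD$, so $i_*$ transports it to a linear resolution over $\tD$), whereas you simply observe that coideals are convex and so (4)$\Rightarrow$(3) is tautological. Your route is slightly cleaner here and avoids re-invoking the support machinery; the paper's route, on the other hand, makes the coideal case self-contained and illustrates why coideals behave well without passing through (4). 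Either way the substance is the same.
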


\begin{proof}
Let $\tD$ be a full subcategory of $\tC$. Note that $\tD$ is a Koszul category if and only if for every object $x \in \Ob (\tD)$, the graded $\tD$-module $\tD(x,x) = \tC (x, x)$ is a Koszul $\tD$-module. Let $M = \tC (x, x)$ and apply the previous proposition. One immediately deduces the equivalences of (1), (2), (4), and (5).

Obviously, (3) implies (1) since $\tC$ is a coideal of itself. If $\tD$ is a coideal, then for every $x \in \Ob \tD$, every term in a minimal projective resolution of the graded $\tC$-module $\tD (x, x)$ is supported on $\tD$. The functor $i_{\ast}$ identify $\tD$-modules with $\tC$-modules supported on $\tD$. Therefore, the $\tD$-module $\tD (x, x)$ has a minimal linear projective resolution if and only if the $\tC$-module $\tD (x, x)$ has a minimal linear projective resolution, so the Koszul property of $\tC$ implies the Koszul property of $\tD$.
\end{proof}

By reducing to the case of finite categories, we have:

\begin{proposition} \label{opposite category}
Let $\tC$ be a directed graded $\bk$-linear category. If $\tC$ is a Koszul category, then its opposite category $\tC ^{\op}$ is a Koszul category.
\end{proposition}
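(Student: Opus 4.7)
The plan is to reduce the statement to the case of finite directed graded $k$-linear categories via Theorem \ref{restriction of Koszul categories}, and then invoke the classical fact that Koszulity of a finite-dimensional positively graded $k$-algebra with semisimple degree-zero part is preserved under passing to the opposite algebra.

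First I would verify that $\tC^{\op}$, equipped with the reversed partial order, is again a directed graded $k$-linear category. Conditions (P1)--(P8) are manifestly invariant under reversal of arrows: (P4) uses that the opposite of a semisimple $k$-algebra is semisimple; (P5) and (P8) are symmetric in $x$ and $y$; and (P6) about generation in degrees $0$ and $1$ is a two-sided condition. Moreover, a full subcategory $\tV$ is convex in $\tC$ if and only if $\tV^{\op}$ is convex in $\tC^{\op}$, since $x\leqslant y\leqslant z$ in $\tC$ is precisely $z\leqslant y\leqslant x$ in $\tC^{\op}$; this correspondence preserves finiteness. By Theorem \ref{restriction of Koszul categories}, $\tC^{\op}$ is Koszul if and only if every finite convex full subcategory $\tV^{\op}$ of $\tC^{\op}$ is Koszul. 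Assuming $\tC$ is Koszul, Theorem \ref{restriction of Koszul categories} gives that each finite convex full $\tV$ of $\tC$ is Koszul, so it suffices to show that $\tV^{\op}$ is Koszul whenever $\tV$ is a finite Koszul directed graded $k$-linear category.

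For the finite case, I would associate to $\tV$ the finite-dimensional positively graded $k$-algebra $A=\bigoplus_{i\ge 0}\tV_i$, whose degree-zero part $A_0$ is semisimple by (P4); then $\tV\gmod$ is equivalent to the category of locally finite $\Z$-graded $A$-modules, and Koszulity of $\tV$ translates to classical Koszulity of $A$. The remaining algebraic input is the standard fact that such an $A$ is Koszul if and only if $A^{\op}$ is Koszul. One way to see this is through the $\mathrm{Tor}$ characterization: $A$ is Koszul precisely when $\mathrm{Tor}^A_{i,j}(A_0,A_0)=0$ for $i\ne j$ (with $j$ the internal degree), and the symmetry $\mathrm{Tor}^A_i(M,N)\cong\mathrm{Tor}^A_i(N,M)$ together with the identification of right $A$-modules with left $A^{\op}$-modules yields $\mathrm{Tor}^A_{i,j}(A_0,A_0)\cong\mathrm{Tor}^{A^{\op}}_{i,j}(A_0,A_0)$, transferring Koszulity from $A$ to $A^{\op}$. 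I do not expect a genuine obstacle here: the reduction is bookkeeping confirming that convexity, finiteness, and (P1)--(P8) are self-dual, and the algebraic input is a well-known result in the Koszul theory of finite-dimensional graded algebras.
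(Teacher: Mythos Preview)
Your proposal is correct and follows essentially the same approach as the paper: reduce via Theorem \ref{restriction of Koszul categories} to finite convex full subcategories, identify these with finite-dimensional positively graded algebras, and invoke the classical fact that Koszulity passes to the opposite algebra. The paper cites \cite[Proposition 2.2.1]{BGS} for this last step, whereas you sketch the $\mathrm{Tor}$-diagonal argument; otherwise the arguments are the same.
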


\begin{proof}
If $\tC$ is Koszul, then every finite convex full subcategory $\tV$ is Koszul. But $\tV$ can be considered as a finite dimensional graded algebra. By \cite[Proposition 2.2.1]{BGS}, $\tV ^{\op}$ is also Koszul. Since there is an obvious bijective correspondence between such convex subcategories $\tV$ of $\tC$ and $\tV ^{\op}$ of $\tC ^{\op}$, we deduce by Proposition \ref{restriction of Koszul categories} that
$\tC ^{\op}$ is a Koszul category.
\end{proof}

\subsection{Change of endomorphisms}
We shall give the proof of Theorem \ref{main-theorem-2} in this subsection.

\begin{theorem} \label{change endomorphisms}
Let $\tC$ be a directed graded $\bk$-linear category, and let $\tD$ be a graded $\bk$-linear subcategory of $\tC$. Suppose that they have the same objects and $\tC(x, y) = \tD(x,y)$ for $x, y \in \Ob \tC$ with $x \neq y$. For $M \in \tC \lgmod$, one has:
\begin{enumerate}
\item If $\tC$ is a Koszul category and $M$ is a Koszul $\tC$-module, then $M \trest$ is a Koszul $\tD$-module.
\item If $\tD$ is a Koszul category and $M \trest$ is a Koszul $\tD$-module, then $M$ is a Koszul $\tC$-module.
\end{enumerate}
\end{theorem}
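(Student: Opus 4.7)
The plan is to reduce to the case where $\tC$ (and hence $\tD$) is finite via Proposition \ref{restriction of Koszul modules} and Theorem \ref{restriction of Koszul categories}: the finite convex full subcategories of $\tC$ and $\tD$ correspond bijectively (same objects, same non-identity morphisms), and $\trest$ commutes with further restriction to a common convex subcategory. In the finite case, set $A = \bigoplus_{x,y} \tC(x,y)$ and $B = \bigoplus_{x,y} \tD(x,y)$; then $B \subseteq A$ with $A_{>0} = B_{>0}$ and $B_0 \subseteq A_0$ a semisimple subalgebra, so $A_0 = B_0 \oplus C$ as $(B_0,B_0)$-bimodules, with $C = \bigoplus_x C_x$ and $C_x = \tC(x,x)_0 / \tD(x,x)_0$.

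The key construction is to compare the projective covers $P^0 = \bigoplus_x \tC(x,-) \otimes_{\tC(x,x)_0} M(x)_0$ of $M$ in $\tC\gmod$ and $Q^0 = \bigoplus_x \tD(x,-) \otimes_{\tD(x,x)_0} M(x)_0$ of $M\trest$ in $\tD\gmod$ (with $M(x)_0$ regarded as a $\tD(x,x)_0$-module by restriction in the latter). The natural $\tD$-linear surjection $Q^0 \twoheadrightarrow P^0\trest$ coming from further tensoring, together with the snake lemma applied to the evident commutative diagram with identity on $M\trest$, produces the short exact sequence
\[
0 \longrightarrow K \longrightarrow \Omega_\tD(M\trest) \longrightarrow \Omega_\tC(M)\trest \longrightarrow 0
\]
in $\tD\gmod$ with $K = \Omega_\tD(P^0\trest)$. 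A direct computation writes generators of $K$ in the form $\alpha a \otimes v - \alpha \otimes a v$ ($\alpha \in \tD$, $a \in C_x$, $v \in M(x)_0$); combined with condition (P6), this yields $K$ generated in degree $1$ (since any $\alpha \in \tD_i$ with $i\ge 2$ factors as $\beta\alpha'$ with $|\alpha'|=1$, so its contribution is $\beta \cdot (\alpha' a\otimes v - \alpha'\otimes av)$).

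The two assertions then follow by iterating this construction. For (2), $M\trest$ Koszul gives $\Omega_\tD(M\trest)$ generated in degree $1$, and combined with $K$ generated in degree $1$, the quotient $\Omega_\tC(M)\trest$ is generated in degree $1$. Since generation degrees are invariant under restriction (because $\tC_{>0}=\tD_{>0}$), $\Omega_\tC M$ is generated in degree $1$ over $\tC$; applying the construction recursively to $\Omega_\tC^j M$ and using $\tD$ Koszul to ensure the higher error terms $K_j$ remain generated in degree $j+1$, one concludes $M$ is Koszul over $\tC$. Statement (1) follows by the reverse argument: $M$ Koszul over $\tC$ makes each $\Omega_\tC^j(M)\trest$ generated in degree $j$, and combined with the linear $K_j$ at each step, $\Omega_\tD^j(M\trest)$ is generated in degree $j$, giving $M\trest$ Koszul. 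The main obstacle is controlling the higher correction terms $K_j$; their linearity propagates through the induction via a careful analysis that uses the Koszulity hypothesis on the ambient category ($\tC$ for (1), $\tD$ for (2)) to show that the restrictions $P^j\trest$ of the higher projectives in the $\tC$-resolution are themselves Koszul over $\tD$, which ensures that $K_j = \Omega_\tD(P^j\trest)$ is generated in the correct degree at each level.
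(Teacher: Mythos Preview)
Your reduction to finite convex full subcategories via Proposition \ref{restriction of Koszul modules} and Theorem \ref{restriction of Koszul categories} is correct and matches the paper exactly; the paper then simply invokes \cite[Theorems 5.12 and 5.13]{Li2} for the finite case, whereas you attempt a direct argument. The short exact sequence
\[
0 \longrightarrow K \longrightarrow \Omega_{\tD}(M\trest) \longrightarrow (\Omega_{\tC} M)\trest \longrightarrow 0
\]
and the computation that $K$ is generated in degree $1$ (using $\tC_{>0}=\tD_{>0}$ and (P6)) are both correct and are essentially the first step of the argument in \cite{Li2}.

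The gap is in the iteration. Write $N_j=(\Omega_{\tC}^{\,j}M)\trest$. Your sequence at level $j$ reads
\[
0 \longrightarrow K_j \longrightarrow \Omega_{\tD}(N_j) \longrightarrow N_{j+1} \longrightarrow 0,
\]
and you correctly observe that $K_j$ is generated in degree $j+1$. But for statement (1) you need $\Omega_{\tD}^{\,j+1}(M\trest)$ generated in degree $j+1$, which is \emph{not} the same as $\Omega_{\tD}(N_j)$: the latter is one $\tD$-syzygy of a mixed object $N_j$, not the $(j{+}1)$-st $\tD$-syzygy of $M\trest$. Knowing that the extension of $K_j$ by $N_{j+1}$ is generated in degree $j+1$ does not, by itself, say anything about $\Omega_{\tD}^{\,j+1}(M\trest)$. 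For statement (2) the problem is dual: to conclude $N_{j+1}$ is generated in degree $j+1$ you need $\Omega_{\tD}(N_j)$ generated in degree $j+1$, but your induction only gives that $N_j$ is generated in degree $j$, not that its first $\tD$-syzygy sits in degree $j+1$; the latter would require $N_j$ to be Koszul over $\tD$, which is a strictly stronger statement that you have not established.

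You identify ``controlling $K_j$'' as the main obstacle, but in fact $K_j$ generated in degree $j+1$ is automatic from your own degree-$1$ argument applied to $\Omega_{\tC}^{\,j}M$; the genuine obstacle is relating $\Omega_{\tD}(N_j)$ to $\Omega_{\tD}^{\,j+1}(N_0)$. There are two natural ways to close this: either show that each $K_j$ is \emph{projective} over $\tD$ (so the sequences split and one gets $\Omega_{\tD}^{\,j}(M\trest)\cong K_{j-1}\oplus N_j$ inductively), or strengthen the inductive hypothesis to ``$N_j$ is Koszul over $\tD$'' and prove it via an $\Ext$ argument using the Koszulity of $P^j\trest$ and of $N_j$ simultaneously. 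Your final paragraph gestures at the second route but does not supply either the Koszulity of $P^j\trest$ or the ``kernel of a degree-preserving surjection of Koszul modules is Koszul'' lemma that would be needed; both require real work (this is essentially what \cite{Li2} does).
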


\begin{proof}
By Proposition \ref{restriction of Koszul modules}, we can reduce to the case of finite categories. Indeed, if $\tV$ is a finite convex full subcategory of $\tC$ containing $\supp(M_0)$, then $\tV \cap \tD$ is a finite convex full subcategory of $\tD$ containing $\supp((M \trest)_0) = \supp (M_0)$, and
\begin{equation*}
\tV \leftrightarrow \tV \cap \tD
\end{equation*}
is a bijective correspondence. Furthermore, if $\tC$ is a Koszul category, so is $\tV$. Similarly, $\tV \cap \tD$ is a Koszul category when $\tD$ is.

By Proposition \ref{restriction of Koszul modules}, the Koszul property of $M$ implies the Koszul property of $M \downarrow ^{\C} _{\mathcal{V}}$ for each such $\tV$. Note that statement (1) holds for directed graded $\bk$-linear categories with only finitely many objects by \cite[Theorem 5.12]{Li2}. (In Theorems 5.12, 5.13, and 5.15 of \cite{Li2}, it was assumed that $\D(x, x)=k$ for $x \in \Ob \tC$  but the same proofs work in our case.) Thus, $(M \downarrow ^{\tC} _{\tV}) \downarrow ^{\tV} _{\tV \cap \tD}$ is Koszul for every such $\tV$. But
\begin{equation*}
M \downarrow ^{\tC} _{\tV} \downarrow ^{\tV} _{\tV \cap \tD} = M \downarrow _{\tV \cap \tD} ^{\tC} = M \trest \downarrow _{\tD \cap \tV} ^{\tD},
\end{equation*}
so $M \trest$ is still Koszul restricted to every $\tD \cap \tV$, which implies the Koszul property of $M \trest$ again by Proposition \ref{restriction of Koszul modules}.

Conversely, if $M \trest$ is Koszul, by Proposition \ref{restriction of Koszul modules}, so is $M \trest \downarrow _{\tD \cap \tV} ^{\tD} = M \downarrow ^{\tC} _{\tV} \downarrow ^{\tV} _{\tV \cap \tD}$ for every such $\tV$. Note that statement (2) holds for every finite category $\tV$ by \cite[Theorem 5.13]{Li2}. Thus every $M \downarrow _{\tV} ^{\tC}$ is Koszul. Again by Proposition \ref{restriction of Koszul modules}, $M$ is Koszul, too.
\end{proof}

We can now prove Theorem \ref{main-theorem-2}.

\begin{proof}[Proof of Theorem \ref{main-theorem-2}]
If $\tC$ is Koszul, then for every $x \in \Ob(\tC)$, $\tC (x, x)$ is a Koszul $\tC$-module. By (1) in the previous theorem, $\tC(x, x) \downarrow _{\tD} ^{\tC}$ is a Koszul $\tD$-module. But
\begin{equation*}
\tC(x, x) \downarrow _{\tD} ^{\tC} \cong \bigoplus _{i=1} ^{\dim_{\bk} \tC (x, x)} \tD (x,x).
\end{equation*}
Therefore, $\tD (x, x) = k1_x $ is a Koszul $\tD$-module. Consequently, $\tD$ is a Koszul category.

Conversely, if $\tD$ is a Koszul category, then for every $x \in \Ob(\tD)$, $\tD (x, x) = k1_x$ is a Koszul $\tD$-module, so is $\tC(x, x) \downarrow _{\tD} ^{\tC}$. By (2) in the previous theorem, $\tC (x, x)$ is a Koszul $\tC$-module. Consequently, $\tC$ is a Koszul category.
\end{proof}

\subsection{Quadratic categories}  \label{subsection on quadratic categories}

In this subsection, we show that if a directed graded $\bk$-linear category $\tC$ is Koszul, then it is quadratic. Note that results in this subsection (as well as the next subsection) can be deduced from \cite{Mazorchuk} using Morita theory. Indeed, since Koszulity is not a property of an algebra (or a $\bk$-linear category) but rather of its module category, Morita-equivalent algebras or $\bk$-linear categories are Koszul (or not) simultaneously. We note that in \cite{Mazorchuk}, an additional assumption is imposed on $\tC$: for every object $x \in \tC$, assume $\tC(x, x) = \bk$; however, one can always construct from $\tC$ (in our setting) a Morita-equivalent $\bk$-linear category $\overline{\tC}$ such that $\overline{\tC}$ satisfies this assumption. Nevertheless, we give detailed proofs of these results because: firstly, we can always reduce to the case of finite convex categories, a special advantage of directed categories; secondly, the main purpose of this paper is not to develop a general Koszul theory, but to describe a useful combinatorial criterion implying the Koszulity and apply it to many specific examples in representation stability theory -- if we reduce to the case described in \cite{Mazorchuk} using Morita theory, we will lose the nice combinatorial structure of $\tC$.

Let $\tC$ be a directed graded $\bk$-linear category. Note that $\tC_1$ is a $(\tC_0, \tC_0)$-bimodule. Thus we can define a tensor algebra
\begin{equation*}
\tC_0 [\tC_1] = \tC_0 \oplus \tC_1 \oplus (\tC_1 \otimes _{\tC_0} \tC_1) \oplus \ldots
\end{equation*}
We construct a category $\hat{\tC}$ which has the same objects as $\tC$ as follows: for $x, y \in \Ob \tC$, let $\hat {\tC} (x, y) = 1_y \big ( \tC_0 [\tC_1] \big ) 1_x$. We call $\hat {\tC}$ a \emph{free cover} of $\tC$.

\begin{lemma} \label{free cover}
Let $\tC$ be a directed graded $\bk$-linear category. Then the free cover $\hat {\tC}$ of $\tC$ is a directed graded $\bk$-linear category. Moreover, there is a degree-preserving quotient functor $\pi: \hat{\tC} \to \tC$ which is the identity map on the set $\Ob(\hat{\tC})$.
\end{lemma}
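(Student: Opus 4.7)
The plan is to verify that $\hat{\tC}$, equipped with the same partial order on objects as $\tC$, satisfies each of (P1)--(P8) together with the directedness condition, and then to define $\pi$ as the extension to $\tC_0[\tC_1]$ of the composition maps of $\tC$. First I would unpack the definition: for $n\geqslant 1$,
\[
\hat{\tC}(x,y)_n = \bigoplus_{x=x_0, x_1, \ldots, x_n = y} \tC(x_{n-1},x_n)_1 \otimes_{\tC(x_{n-1},x_{n-1})_0} \cdots \otimes_{\tC(x_1,x_1)_0} \tC(x_0,x_1)_1,
\]
while $\hat{\tC}(x,y)_0 = 1_y \tC_0 1_x$ agrees with $\tC(x,y)_0$. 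With this description, (P2) and (P6) are built into the construction of the tensor algebra, (P3) follows from (P3) for $\tC$ applied in degree $0$, and (P4) reduces at once to (P4) for $\tC$.

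Next I would handle directedness together with (P7). Directedness of $\tC$ combined with (P7) for $\tC$ implies that $\tC(a,b)_1\neq 0$ forces $a<b$ strictly. Consequently a summand in the decomposition above is nonzero only when $x=x_0<x_1<\cdots<x_n=y$, which simultaneously yields directedness of $\hat{\tC}$ and the vanishing $\hat{\tC}(x,x)_n=0$ for $n\geqslant 1$ demanded by (P7). Condition (P8) is inherited verbatim from $\tC$ because the partial order is unchanged. For the finiteness conditions: (P5) for $\hat{\tC}$ coincides with (P5) for $\tC$ since the degree-$1$ parts agree; and an induction on $n$, using (P5) to bound the number of chains $x_0,\ldots,x_n$ starting at $x$ and (P1) to bound each tensor factor, yields (P1) for $\hat{\tC}$.

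Finally, I would define $\pi$ to be the identity on objects and to send a tensor $\alpha_n\otimes\cdots\otimes\alpha_1$ to the composition $\alpha_n\cdots\alpha_1$ computed in $\tC$. This is well-defined on tensor products over $\tC_0$ because composition in $\tC$ is $\tC_0$-bilinear, functorial because composition is associative, and degree-preserving because $\tC_i\cdot\tC_j\subseteq\tC_{i+j}$; surjectivity in each degree is exactly (P6) for $\tC$. The proof contains no real obstacle and is essentially bookkeeping; the only subtlety worth tracking is the interaction between the tensor products over the possibly non-trivial semisimple algebras $\tC(x_i,x_i)_0$ and the various finiteness conditions, which is precisely what the $\tC_0$-bilinearity of composition together with (P5) handle.
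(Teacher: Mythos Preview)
Your approach is essentially the same as the paper's: verify (P1)--(P8) and directedness for $\hat{\tC}$ using the tensor-algebra description, then define $\pi$ via iterated composition. The paper declares (P2)--(P8) ``immediate'' and spends its effort only on (P1), whereas you spell out more of the routine checks.

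There is one point where your argument for (P1) does not quite close as written. Your induction on $n$, using (P5) and (P1) for $\tC$, shows that each graded piece $\hat{\tC}(x,y)_n$ is finite dimensional; but (P1) demands that the \emph{total} space $\hat{\tC}(x,y)=\bigoplus_n \hat{\tC}(x,y)_n$ be finite dimensional, so you still need $\hat{\tC}(x,y)_n=0$ for $n$ large. This follows from ingredients you have already recorded: since the contributing chains are strictly increasing, they lie in the convex hull of $\{x,y\}$, which by (P8) has only finitely many objects, bounding $n$. The paper makes exactly this move, phrasing it as passing to the finite convex hull $\tV$ of $\{x,y\}$ and observing that $\tV_1^{\otimes n}=0$ for $n$ large, so that $\tV_0[\tV_1]$ is finite dimensional. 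Once you insert this use of (P8), your argument and the paper's coincide.
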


\begin{proof}
Obviously, $\hat{\tC}$ is directed with respect to the same partial order $\leqslant$. Also, the tensor structure of $\tC_0 [\tC_1]$ gives a natural grading on $\hat {\tC}$, where $\hat {\tC}_i=\tC_i$ for $i = 0$ and $i = 1$. Thus, conditions (P2)--(P8) are immediate. To prove condition (P1), take any two objects $x, y \in \tC$; we want to show that $\dim_{\bk} \hat {\tC} (x, y) < \infty$.

Let $\tV$ be the convex hull of $\{ x, y\}$ in $\hat {\tC}$. It is clear that $\hat {\tC} (x, y) = \tV (x, y)$. Moreover, the set of morphisms in $\tV$ is
\begin{equation*}
\tV_0 [\tV_1] = \tV_0 \oplus \tV_1 \oplus (\tV_1 \otimes _{\tV_0} \tV_1) \oplus \ldots
\end{equation*}
But by condition (P8), $\tV$ is a category with finitely many objects, so $\tV_0$ and $\tV_1$ are finite dimensional. Moreover, one has $\tV_1 ^{\otimes n} = 0$ for all sufficiently large $n$. Hence, $\tV_0 [\tV_1]$ is finite dimensional, and so is $\hat {\tC} (x, y) = \tV (x, y) = 1_y \big ( \tV_0 [\tV_1] \big ) 1_x$.

The definition of $\pi$ is straightforward. It is the identity map restricted to the sets of objects. Restricted to the sets of morphisms, it is induced by multiplication maps. Clearly, $\pi$ is a full functor, and is degree-preserving.
\end{proof}

Let $K$ be the kernel of $\pi: \hat {\tC} \to \tC$, which by definition is the subspace of $\C_0[\C_1]$ spanned by morphisms $\alpha$ in $\hat {\tC}$ such that $\pi (\alpha) = 0$. It is a $(\hat {\tC}, \hat {\tC})$-bimodule.

\begin{definition}
We say that $\tC$ is a \emph{quadratic} category if the kernel $K$ as a $(\hat {\tC}, \hat {\tC})$-bimodule has a set of generators contained in $\hat {\tC}_2 = \tC_1 \otimes _{\tC_0} \tC_1$; or equivalently, $K$ as a graded $(\hat {\tC}, \hat {\tC})$-bimodule is generated in degree 2.
\end{definition}

\begin{proposition}
The directed graded $\bk$-linear category $\tC$ is quadratic if and only if each convex full subcategory with finitely many objects is quadratic.
\end{proposition}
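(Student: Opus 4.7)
The plan is to show that the question of quadraticity is controlled locally by the bimodule structure of $K$, and then to move between $\tC$ and its finite convex full subcategories by exploiting directedness together with condition (P8).

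The crucial preliminary step is to verify the following identification. For any convex full subcategory $\tV$ of $\tC$ and any $x,y \in \Ob(\tV)$, one has $\hat{\tV}(x,y) = \hat{\tC}(x,y)$, and consequently $K_{\tV}(x,y) = K(x,y)$, where $K_{\tV}$ denotes the kernel of the quotient $\hat{\tV} \to \tV$. The reason is that a spanning element of $\hat{\tC}(x,y)_n$ is a composition $\alpha_n \cdots \alpha_1$ of degree-$1$ morphisms with intermediate objects $x = z_0, z_1, \dots, z_n = y$; since $\tC$ is directed, $x \leqslant z_1 \leqslant \cdots \leqslant z_{n-1} \leqslant y$, so convexity of $\tV$ forces each $z_i \in \Ob(\tV)$, and the whole composition already lives in $\hat{\tV}$. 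The same remark applies to monomials in the bimodule $K$: any product $\beta \cdot \gamma \cdot \delta$ with $\beta \in \hat{\tC}$, $\gamma \in K$, $\delta \in \hat{\tC}$ that begins at $x$ and ends at $y$ has every intermediate object sandwiched between $x$ and $y$ in the partial order.

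Given this identification, both implications become formal. For the forward direction, suppose $\tC$ is quadratic and let $\tV$ be a finite convex full subcategory. For any $n \geqslant 3$ and any $\alpha \in K_{\tV}(x,y)_n = K(x,y)_n$, we can write $\alpha$ as a $k$-linear combination of triple products $\beta \cdot \gamma \cdot \delta$ with $\gamma \in K_2$, because $K$ is generated in degree $2$ over $\hat{\tC}$. By the previous paragraph, each such product lies in $\hat{\tV} \cdot K_{\tV,2} \cdot \hat{\tV}$, so $K_{\tV}$ is generated in degree $2$ as a bimodule over $\hat{\tV}$. For the backward direction, assume every finite convex full subcategory is quadratic and let $\alpha \in K(x,y)_n$ with $n \geqslant 3$. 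Take $\tV$ to be the convex hull of $\{x,y\}$, which is finite by condition (P8). Then $\alpha \in K_{\tV}(x,y)_n$, and by assumption it decomposes as a combination of products through $K_{\tV,2} = K_2$; this decomposition is automatically a decomposition inside $\hat{\tC}$ via the inclusion $\hat{\tV} \hookrightarrow \hat{\tC}$, so $K$ is generated in degree $2$.

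The only nontrivial technical point, and the one I expect to be the main place where care is needed, is the identification $\hat{\tV}(x,y) = \hat{\tC}(x,y)$: one must be sure that the tensor-algebra description of $\hat{\tC}$ really does only involve compositions through intermediate objects constrained by the directed order, rather than through arbitrary objects of $\Ob(\tC)$. Once this is established, the bimodule-generation statement transfers in both directions without further subtlety, and the proposition follows.
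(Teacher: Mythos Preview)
Your proof is correct and follows essentially the same approach as the paper: both arguments hinge on the identification $\hat{\tV}(x,y)=\hat{\tC}(x,y)$ for $x,y\in\Ob(\tV)$ (equivalently $K^{\tV}=1_{\tV}K1_{\tV}$ in the paper's idempotent notation), which holds precisely because convexity and directedness force all intermediate objects of a degree-$1$ chain from $x$ to $y$ to lie in $\tV$. The paper packages the forward direction as $1_{\tV}(\hat{\tC}K_2\hat{\tC})1_{\tV}=\hat{\tV}K^{\tV}_2\hat{\tV}$ and the backward direction as $K=\bigcup_{\tV}K^{\tV}$, which is exactly your argument in slightly different dress.
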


\begin{proof}
Suppose that $\tC$ is quadratic. Let $\tV$ be a convex full subcategory with finitely many objects. Let $1_{\tV}$ be $\sum _{x \in \Ob(\tV)} 1_x$. Restricted to objects in $\tV$, the quotient functor $\pi: \hat {\tC} \to \tC$ gives rise to a quotient functor $\pi_{\tV}: \hat {\tV} \to \tV$, whose kernel $K^{\tV}$ is precisely $1_{\tV} K 1_{\tV}$, where $K$ is the kernel of $\pi$. Since $\tC$ is quadratic, we have $K = \hat{\tC} K_2 \hat{\tC}$. Therefore, $K^{\hat{\tV}} = 1_{\tV} (\hat{\tC} K_2 \hat{\tC}) 1_{\tV}$. However, for every morphism $\alpha$ in $\hat{\tC}$ but not in $\hat{\tV}$, one has $1_{\tV} (\hat{\tC} \alpha \hat{\tC}) 1_{\tV} = 0$. This implies
\begin{equation*}
1_{\tV} (\hat{\tC} K_2 \hat{\tC}) 1_{\tV} = 1_{\tV} \big ( \hat{\tC} (K_2 \cap \hat{\tV}) \hat{\tC} \big) 1_{\tV} = 1_{\tV} \hat{\tC} 1_{\tV} (K_2 \cap \hat{\tV}) 1_{\tV} \hat{\tC}  1_{\tV} = \hat{\tV} K^{\hat{\tV}} _2 \hat{\tV}.
\end{equation*}
Therefore, $K^{\hat{\tV}}$ is generated in degree 2 as graded $(\hat {\tV}, \hat {\tV})$-bimodule, so $\hat {\tV}$ is quadratic.

Conversely, if every such $\tV$ is quadratic, then one has
\begin{equation*}
K^{\hat{\tV}} = \hat{\tV} K^{\hat{\tV}} _2 \hat{\tV} \subseteq \hat{\tC} K^{\hat{\tV}} _2 \hat{\tC}.
\end{equation*}
Since $K$ is the union of $K^{\hat{\tV}}$, it is generated by the union of all $K^{\hat{\tV}} _2$. That is, $\tC$ is quadratic.
\end{proof}

\begin{proposition} \label{Koszul categories are quadratic}
If a directed graded $\bk$-linear category $\tC$ is Koszul, then it is quadratic.
\end{proposition}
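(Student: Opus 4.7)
The plan is to reduce to the case of finite convex full subcategories, where we can invoke the classical fact that a Koszul finite dimensional graded algebra is quadratic, and then assemble these local statements using the previous proposition.

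First, let $\tV$ be any finite convex full subcategory of $\tC$. By Theorem \ref{restriction of Koszul categories}, the Koszulity of $\tC$ implies that $\tV$ is a Koszul category. Since $\tV$ has only finitely many objects and each hom-space is finite dimensional by (P1), the associated graded algebra $\bigoplus_{i} \tV_i$ is a finite dimensional positively graded $k$-algebra whose degree-$0$ part is semisimple, and Koszulity of $\tV$ in our categorical sense coincides with Koszulity of this algebra in the sense of \cite{BGS}. Therefore, by the classical result \cite[Proposition 2.3.1]{BGS} (or its standard reformulation for basic Koszul algebras), this algebra is quadratic, which in our language says precisely that $\tV$ is a quadratic category.

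Now apply the preceding proposition: since every finite convex full subcategory $\tV$ of $\tC$ is quadratic, $\tC$ itself is quadratic. Concretely, if $K$ denotes the kernel of the quotient functor $\pi:\hat{\tC}\to\tC$ and $K^{\hat{\tV}}$ denotes the corresponding kernel for each finite convex $\tV$, then
\begin{equation*}
K^{\hat{\tV}} \;=\; \hat{\tV}\, K^{\hat{\tV}}_2 \,\hat{\tV} \;\subseteq\; \hat{\tC}\, K_2 \,\hat{\tC}
\end{equation*}
for every such $\tV$, and because $K$ is exhausted by the $K^{\hat{\tV}}$ as $\tV$ ranges over the finite convex full subcategories of $\tC$ (using (P8) to express any morphism of $\hat{\tC}$ inside such a $\tV$), we conclude $K=\hat{\tC}\,K_2\,\hat{\tC}$, proving that $\tC$ is quadratic.

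The only nontrivial step is the invocation of the classical finite-dimensional result, and the only care needed is to check that our categorical notion of Koszulity and quadraticity really specializes to the algebraic notions used in \cite{BGS} when the category is finite; this is routine given the conventions (P1)--(P8) and Proposition \ref{projective-cover}, which guarantees existence and minimality of projective covers in the setting where \cite{BGS} operates.
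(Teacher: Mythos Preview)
Your proof is correct and follows essentially the same approach as the paper: reduce to finite convex full subcategories via Theorem~\ref{restriction of Koszul categories} and the preceding proposition, then invoke the classical finite-dimensional result from \cite{BGS}. The paper's proof is just a two-line version of yours (citing \cite[Corollary~2.3.3]{BGS} rather than Proposition~2.3.1), and your second paragraph merely re-derives what the preceding proposition already established.
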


\begin{proof}
By the previous proposition and Proposition \ref{restriction of Koszul categories}, it suffices to prove the theorem for all convex full subcategories with only finitely many objects. But in this case, the result is well known, see \cite[Corollary 2.3.3]{BGS}.
\end{proof}

Let $\tC$ be a directed graded $\bk$-linear category. We shall define a directed graded $\bk$-linear category $\tC^!$, called the \emph{quadratic dual category} of $\tC$. The construction is well-known, see for example \cite[Section 4.1]{Mazorchuk}. We repeat it here for the convenience of the reader.

\begin{notation}
If $x,y \in \Ob(\tC)$, and $V$ is a left $\tC(y,y)$ and right $\tC(x,x)$ bimodule, we denote by $\DD V$ the left $\tC(x,x)$ and right $\tC(y,y)$ bimodule $\Hom_{\tC(y,y)} (V, \tC(y,y))$.
\end{notation}

Let $\DD \tC_1 = \bigoplus_{x,y\in \Ob(\tC)} \DD \tC(x,y)_1$. One can define a tensor algebra
\begin{equation*}
\tC_0 [\DD \tC_1] = \tC_0 \oplus \DD \tC_1 \oplus (\DD \tC_1 \otimes _{\tC_0} \DD \tC_1) \oplus \ldots
\end{equation*}
and hence a category $\check {\tC}$ with the same objects as $\tC$ by $\check {\tC} (x, y) = 1_y \big ( \tC_0 [\DD \tC_1] \big ) 1_x$. As in the proof of Lemma \ref{free cover}, one can show that $\check {\tC}$ is a directed graded $\bk$-linear category.

For $x, y, z \in \Ob \tC$, applying the contravariant functor $\DD$ to the composition map
\begin{equation*}
\tC (y, z)_1 \otimes _{\tC (y, y)} \tC (x, y)_1 \to \tC(x, z)_2 ,
\end{equation*}
we obtain (see \cite[Section 2.7]{BGS})
\begin{equation*}
\DD \tC (x, z)_2 \to \DD \big( \tC(y, z)_1 \otimes _{\tC (y, y)} \tC(x, y)_1 \big ) \cong \DD \tC (x, y)_1 \otimes _{\tC (y, y)} \DD \tC (y, z)_1.
\end{equation*}
Let $I$ be the $(\check {\tC}, \check{\tC})$-bimodule generated by images of all these dual maps. Finally, we define $\tC^!$ by
\begin{equation*}
\Ob(\tC^!)=\Ob(\tC) \quad\mbox{ and }\quad \tC^! (x, y) = \check {\tC} (x, y) / 1_y I 1_x .
\end{equation*}
The categories $\check {\tC}$ and $\tC^!$ are directed with respect to the opposite partial order $\leqslant ^{\op}$, rather than $\leqslant$.

\begin{remark}
It is well known that for any finite dimensional semisimple $\bk$-algebra $A$, if $M$ is an $A$-module, then there is an isomorphism $\Hom_A(M,A) \cong \Hom_{\bk}(M, \bk)$; see, for example, \cite[Proposition 2.7]{Broue}.
\end{remark}

\subsection{Koszul duality}

This subsection is a summary of some well-known results of Koszul duality theory following \cite{BGS} and \cite{Mazorchuk}. Although \cite{Mazorchuk} assumes that $\tC_0 (x, x) = k 1_x$ for $x \in \Ob \tC$, their proofs for the following results still hold under the weaker assumption that $\tC _0 (x,x)$ is a finite dimensional semisimple algebra for $x \in \Ob \tC$ (see \cite{BGS}).

Let $\tC$ be a directed graded $\bk$-linear category.

\begin{notation}[Cohomological degree shift]
As usual, we denote the shift functor on derived categories by $[1]$.
\end{notation}

The Yoneda category $\tY$ of $\tC$ is the graded $\bk$-linear category defined as follows. It has the same objects as $\tC$. For $x, y \in \Ob \tY$, we let (see \cite[Section 2.3]{Drozd} or \cite[Section 4.3]{Mazorchuk}):
\begin{align*}
\tY (x, y) & = \bigoplus _{i\in \Z} \left( \bigoplus_{j\in \Z} D (\tC \lgmod) (\tC (x, x), \tC (y, y) \langle j\rangle [i] ) \right) \\
& = \bigoplus_{i\in \Z} \left( \bigoplus_{j\in \Z} \Ext _{\tC \lgmod} ^i (\tC (x,x), \tC (y,y)\langle j \rangle) \right).
\end{align*}
In particular, if $\tC$ is Koszul, then
\begin{equation*}
\tY (x, y) = \bigoplus_{i \geqslant 0} \Ext _{\tC \lgmod} ^i (\tC (x,x), \tC (y,y)\langle i \rangle) .
\end{equation*}

\begin{proposition} \label{yoneda vs quadratic dual}
\cite[Proposition 17]{Mazorchuk}
Let $\tC$ be a directed graded $\bk$-linear category which is Koszul.
 Then there is an isomorphism of graded $\bk$-linear categories $\tY \cong (\tC^!)^{\op}$.
\end{proposition}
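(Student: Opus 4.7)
The plan is to reduce the isomorphism to the classical finite-dimensional Koszul duality theorem (\cite[Theorem 2.10.1]{BGS}, whose proof is valid for graded $k$-linear categories with semisimple degree-zero part, matching our (P4)) by exploiting the locality of both $\tY$ and $\tC^!$ under restriction to finite convex full subcategories. Fix $x,y\in\Ob(\tC)$. By (P8), the convex hull $\tV_0$ of $\{x,y\}$ is finite; let $\tV$ be any finite convex full subcategory of $\tC$ containing $\tV_0$. The space $\tC^!(x,y)$ is computed inside $\tV$: by construction, $\check{\tC}(x,y)=1_y\bigl(\tC_0[\mathrm{D}\tC_1]\bigr)1_x$ is spanned by tensor products along directed chains $x=z_0\leqslant z_1\leqslant\cdots\leqslant z_r=y$, whose intermediate $z_s$ all lie in $\tV_0\subseteq\tV$ by directedness and convexity; the generators of $1_yI1_x$ likewise involve only such morphisms, so $\tC^!(x,y)=\tV^!(x,y)$.

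For the Yoneda side, Koszulity of $\tC$ provides a minimal linear projective resolution $\Pb\to\tC(x,x)$ in $\tC\lgmod$. Each $P^{-i}$ decomposes as a finite direct sum of shifted representables $\tC(z,-)\langle i\rangle\otimes_{\tC(z,z)_0}W_z$ with $z\in\supp(P^{-i}_i)$; applying $\Hom_{\tC\lgmod}(-,\tC(y,y)\langle i\rangle)$ picks out only summands indexed by $z$ with a nontrivial map to $\tC(y,y)\langle i\rangle$, which by (P3) and directedness forces $x\leqslant z\leqslant y$ and hence $z\in\Ob(\tV)$. Since $\tC(z,-)\!\downarrow_{\tV}^{\tC}=\tV(z,-)$ is projective in $\tV\lgmod$ for each $z\in\Ob(\tV)$, the restricted complex $\Pb\!\downarrow_{\tV}^{\tC}\to\tV(x,x)$ is still a minimal linear projective resolution, and the induced map on Ext groups is an isomorphism
\[
\Ext^i_{\tC\lgmod}\!\bigl(\tC(x,x),\tC(y,y)\langle i\rangle\bigr)\;\cong\;\Ext^i_{\tV\lgmod}\!\bigl(\tV(x,x),\tV(y,y)\langle i\rangle\bigr).
\]
By Theorem~\ref{restriction of Koszul categories}, $\tV$ is Koszul, so the finite-dimensional Koszul duality theorem yields an isomorphism $\tY_\tV\cong(\tV^!)^{\op}$ of graded $k$-linear categories in which the Yoneda composition corresponds to the opposite of the multiplication in $\tV^!$.

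Finally, letting $\tV$ vary over finite convex full subcategories of $\tC$ (directed by inclusion), the identifications of morphism spaces above are natural in $\tV$, and any composition involving a triple of objects $\{x,y,z\}$ in $\tY$ or $(\tC^!)^{\op}$ can be computed inside a single $\tV$ containing the convex hull of $\{x,y,z\}$. Assembling the finite-case isomorphisms yields the desired isomorphism $\tY\cong(\tC^!)^{\op}$ of graded $k$-linear categories. The main obstacle is not in the reduction itself but in the underlying matching of Yoneda composition of extensions with the multiplication in $\mathrm{D}\tC_1$-tensors modulo quadratic relations, including the reversal of order converting $\tC^!$ to $(\tC^!)^{\op}$; this is precisely the substance of the classical BGS theorem, which I invoke rather than reprove, with the locality argument ensuring the matching is preserved as one assembles the pieces over all of $\tC$.
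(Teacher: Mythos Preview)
Your approach is correct but takes a genuinely different route from the paper. The paper invokes \cite[Proposition~17]{Mazorchuk} directly at the level of the full (possibly infinite) category $\tC$: Mazorchuk--Ovsienko--Stroppel establish an isomorphism between $\Ext_{\tC}^{\mathrm{Lin}}(\mathrm{L})$ (the full subcategory of $D(\tC\lgmod)$ on objects $\tC(x,x)\langle i\rangle[i]$, which is a $\Z$-cover of $\tY$) and a $\Z$-cover of $(\tC^!)^{\op}$, and the paper simply observes that quotienting by the compatible $\Z$-actions recovers $\tY\cong(\tC^!)^{\op}$; no reduction to finite subcategories is needed. Your approach instead exploits the locality of both $\tY(x,y)$ and $\tC^!(x,y)$ under restriction to finite convex full subcategories, reduces to the classical finite-dimensional Koszul duality of \cite{BGS}, and glues. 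This is more in keeping with the strategy used elsewhere in Section~\ref{Koszul theory for graded directed categories}, and your Ext-restriction step essentially reproduces the lemma the paper proves immediately after this proposition. The paper's route is shorter but relies on the observation that Mazorchuk's argument goes through with semisimple (rather than one-dimensional) $\tC(x,x)_0$; your route trades this for a gluing step whose correctness rests on the naturality of the BGS isomorphism in $\tV$, which you acknowledge and invoke but do not verify in detail.
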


\begin{proof}
Following the notations in  \cite[Proposition 17]{Mazorchuk}, one has a natural $\Z$-action on $\Ext _{\tC} ^{\mathrm{Lin}} (\mathrm{L})$ (the full subcategory of $D (\tC \lgmod)$ consisting of all objects of the form $\tC (x, x)  \langle i \rangle [i]$ for $x \in \Ob \tC$ and $i \in \Z$), and a natural $\Z$-action on $((\tC^!) ^{\Z}) ^{\op}$. Moreover, these two actions are compatible. Taking the quotient categories modulo these $\Z$-actions, we recover $\tY$ and $(\tC^!)^{\op}$. Thus, the isomorphism of $\Ext _{\tC} ^{\mathrm{Lin}} (\mathrm{L})$ and $(\tC^!) ^{\Z}) ^{\op}$ in  \cite[Proposition 17]{Mazorchuk} induces an isomorphism of their quotients by the $\Z$-actions.
\end{proof}

We shall need the following lemma.

\begin{lemma}
Let $\tC$ be a directed graded $\bk$-linear category which is Koszul. Let $M \in \tC \lgmod$ be a Koszul $\tC$-module and let $\tV$ be a finite full convex subcategory of $\tC$ containing $\supp (M_0)$. Then, for each $i \geqslant 0$, one has
\begin{equation*}
1_{\tV} \Ext _{\tC \gmod} ^{i} (M, \tC_0\langle i \rangle) \cong \Ext _{\tV \gmod}^i  (1_{\tV} M, \tV_0\langle i \rangle) ,
\end{equation*}
where $1_{\tV} = \sum_{x\in \Ob(\tV)} 1_x$. (On the left hand side, $1_{\tV}$ is an element of $\tY_0$, whereas on the right hand side, it is an element of $\tC_0$.)
\end{lemma}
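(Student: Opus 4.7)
The plan is to reduce both sides of the claimed isomorphism to Ext groups computed over a common subcategory $\tV\cap\tE$, where $\tE$ denotes the minimal coideal of $\tC$ containing $\supp(M_0)$. The main tools are the two adjunctions from Section \ref{Koszul theory for graded directed categories}: $i_*\dashv i^!$ for a coideal inclusion $i$ and $j^*\dashv j_*$ for an ideal inclusion $j$. By Lemma \ref{property of recollement}, $i_*$ and $j^*$ preserve projective modules while $i^!,\,j^*,\,i_*,\,j_*$ are all exact, so both adjunctions descend to natural isomorphisms on Ext groups (via projective resolutions in the left-hand variable).

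For the left-hand side, since $M$ is Koszul (hence generated in degree $0$) with $\supp(M_0)\subseteq\Ob(\tV)\subseteq\Ob(\tE)$, directedness of $\tC$ forces $M$ to be supported on $\tE$; thus $M=i_*(i^!M)$ for the inclusion $i\colon\tE\to\tC$. For $y\in\Ob(\tE)$, the simple $\tC$-module $\tC(y,y)_0\langle i\rangle$ is the pushforward $i_*(\tE(y,y)_0\langle i\rangle)$, while for $y\notin\Ob(\tE)$ its image under $i^!$ vanishes. The coideal adjunction thus yields $1_\tV\Ext^i_{\tC\gmod}(M,\tC_0\langle i\rangle)=1_{\tV\cap\tE}\Ext^i_{\tE\gmod}(i^!M,\tE_0\langle i\rangle)$. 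As argued in the proof of Proposition \ref{restriction of Koszul modules}, convexity of $\tV$ together with the fact that every object of $\tE$ lies above some element of $\supp(M_0)\subseteq\Ob(\tV)$ implies that $\tV\cap\tE$ is an ideal of $\tE$. Letting $j\colon\tV\cap\tE\to\tE$ denote this inclusion, each simple $\tE(y,y)_0\langle i\rangle$ for $y\in\Ob(\tV\cap\tE)$ equals $j_*((\tV\cap\tE)(y,y)_0\langle i\rangle)$, and the ideal adjunction gives $1_{\tV\cap\tE}\Ext^i_{\tE\gmod}(i^!M,\tE_0\langle i\rangle)\cong\Ext^i_{(\tV\cap\tE)\gmod}(1_{\tV\cap\tE}M,(\tV\cap\tE)_0\langle i\rangle)$.

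For the right-hand side, I would apply the analogous reasoning to the pair $(\tV,\tV\cap\tE)$: since $\tE$ is a coideal of $\tC$, $\tV\cap\tE$ is a coideal of $\tV$, and since $M$ is supported on $\tE$, the restricted module $1_\tV M$ is supported on $\tV\cap\tE$. The coideal adjunction for the inclusion $k\colon\tV\cap\tE\to\tV$ then yields $\Ext^i_{\tV\gmod}(1_\tV M,\tV_0\langle i\rangle)\cong\Ext^i_{(\tV\cap\tE)\gmod}(1_{\tV\cap\tE}M,(\tV\cap\tE)_0\langle i\rangle)$, matching the expression obtained for the left-hand side.

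The main conceptual step is recognizing $\tV\cap\tE$ as the common subcategory that is simultaneously an ideal of $\tE$ (when viewed from above) and a coideal of $\tV$ (when viewed from below), allowing both sides of the lemma to collapse onto it via the same adjunction machinery. The remaining work is bookkeeping: tracking which inclusions are ideals versus coideals, and verifying that the target simple modules are identified as pushforwards of simples in the subcategories. As an aside, the Koszul hypotheses on $\tC$ and $M$ do not appear to enter the argument beyond ensuring that $M$ is generated in degree $0$; this suggests that a slightly stronger statement may hold. Alternatively, one could prove the lemma more directly by an induction showing that $(\Omega_\tC^i M)(y)=(\Omega_\tV^i(1_\tV M))(y)$ for every $y\in\Ob(\tV)$, using the same convexity argument to show that the generators of $P^{-i}$ contributing at $y\in\Ob(\tV)$ all lie in $\Ob(\tV)$.
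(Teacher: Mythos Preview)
Your argument is correct and takes a genuinely different route from the paper. The paper's proof uses the Koszul hypothesis directly: since $M$ is Koszul, $\Omega^i M$ is generated in degree $i$, so $\Ext^i_{\tC\gmod}(M,\tC_0\langle i\rangle)\cong\Hom_{\tC\gmod}((\Omega^i M)_i,\tC_0\langle i\rangle)$, and the computation then reduces to manipulations of degree-$i$ pieces together with the identification $\Omega^i(1_\tV M)\cong 1_\tV\,\Omega^i M$ coming from exactness and projective-preservation of the restriction functor to $\tV$. Your approach instead routes both sides through the common subcategory $\tV\cap\tE$ using only the abstract adjunctions $i_*\dashv i^!$ and $j^*\dashv j_*$, never invoking linearity of the resolution. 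This is more conceptual and, as you correctly observe, shows that the Koszul hypotheses are not needed for this lemma beyond $M$ being generated in degree $0$ (so that $M$ is supported on $\tE$). The paper's argument is more hands-on and makes the link to syzygies explicit, which is convenient for the surrounding theorem; your alternative suggestion at the end---comparing syzygies directly via $1_\tV\,\Omega^i_\tC M\cong\Omega^i_\tV(1_\tV M)$---is essentially the paper's method.
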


\begin{proof}
By taking a minimal linear projective resolution $\Pb \to M$, we deduce that
\begin{equation*}
\Ext ^i _{\tC \gmod} (M, \tC_0\langle i \rangle) \cong \Hom _{\tC \gmod} (\Omega ^i M, \tC_0\langle i \rangle).
\end{equation*}
Note that $\Omega^i M$ is generated in degree $i$. Therefore,
\begin{equation*}
\Hom _{\tC \gmod} (\Omega ^i M, \tC_0\langle i \rangle) \cong \Hom _{\tC \gmod} ((\Omega ^i M)_i, \tC_0\langle i \rangle).
\end{equation*}
Thus,
\begin{align*}
1_{\tV} \Ext ^i _{\tC \gmod} (M, \tC_0\langle i \rangle) & \cong 1_{\tV} \Hom _{\tC \gmod} ((\Omega ^i M)_i, \tC_0 \langle i \rangle)\\
& \cong \Hom_{\tC \gmod} ((\Omega ^i M)_i, \tV_0\langle i \rangle)\\
& \cong \Hom_{\tC \gmod} (1_{\tV} (\Omega ^i M)_i, \tV_0\langle i \rangle)\\
& \cong \Hom _{\tV \gmod} (1_{\tV} (\Omega ^i M)_i, \tV_0\langle i \rangle).
\end{align*}

As explained in the proof of Proposition \ref{restriction of Koszul modules}, the restriction functor $\downarrow _{\tV} ^{\tC}$ is exact and preserves projective modules, so we get a minimal projective resolution $1_{\tV} \Pb \to 1_{\tV} M$. Consequently, $\Omega^i (1_{\tV} M) \cong 1_{\tV} \Omega ^i M$. Combining this with the previous isomorphism, we have:
\begin{align*}
1_{\tV} \Ext ^i _{\tC \gmod} (M, \tC_0\langle i \rangle) &\cong \Hom _{\tV \gmod} (\Omega ^i (1_{\tV} M)_i, \tV_0\langle i \rangle)\\
& \cong \Ext _{\tV \gmod} ^i (1_{\tV} M, \tV_0\langle i \rangle) .
\end{align*}
\end{proof}

We have the following result, which is well-known when the category is finite.

\begin{theorem}
Let $\tC$ be a directed graded $\bk$-linear category which is Koszul. Then its Yoneda category $\tY$ is Koszul, and the Yoneda category of $\tY \cong (\tC^!)^{\rm{op}}$ is isomorphic to $\tC$. Moreover, the functors
\begin{equation*}
\mathrm{E} = \bigoplus_{i \geqslant 0}\Ext^{i} _{\tC \gmod} (-, \tC_0\langle i \rangle)
\quad \mbox{ and } \quad \mathrm{F} = \bigoplus_{i\geqslant 0} \Ext^{i} _{\tY \gmod} (-, \tY_0\langle i \rangle)
\end{equation*}
give anti-equivalences between the full subcategory of Koszul $\tC$-modules in $\tC \lgmod$ and the full subcategory of Koszul $\tY$-modules in $\tY \lgmod$.
\end{theorem}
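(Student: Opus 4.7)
The plan is to leverage Proposition \ref{restriction of Koszul modules}, Theorem \ref{restriction of Koszul categories}, and the lemma immediately preceding the statement to reduce the theorem to the case of a finite directed graded Koszul category, where it is the classical Koszul duality of \cite[Theorem 2.10.1]{BGS}. The key observation is that both the Yoneda construction and the Ext-functors $\mathrm{E}$ and $\mathrm{F}$ commute, up to natural isomorphism, with restriction to a finite convex full subcategory $\tV$ containing the relevant support.

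First I would establish that $\tY$ is Koszul. Fix $x \in \Ob(\tY) = \Ob(\tC)$ and let $\tV$ be any finite convex full subcategory of $\tC$ containing $x$. By Theorem \ref{restriction of Koszul categories}, $\tV$ is Koszul, so by \cite[Theorem 2.10.1]{BGS} its Yoneda category, which I will denote $\tY_{\tV}$, is a finite Koszul category whose own Yoneda category is isomorphic to $\tV$. The preceding lemma, together with the definition of $\tY$, identifies $1_{\tV} \tY 1_{\tV}$ with $\tY_{\tV}$; thus every finite convex full subcategory of $\tY$ (with the opposite order) is the Yoneda category of a finite convex full subcategory of $\tC$, and in particular is Koszul. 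Theorem \ref{restriction of Koszul categories} then implies that $\tY$ itself is Koszul. Running the same argument with $\tY$ in place of $\tC$ and assembling the local isomorphisms $\tY_{\tY_{\tV}} \cong \tV$ coherently, the Yoneda category of $\tY$ is isomorphic to $\tC$.

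For the anti-equivalence claim, let $M \in \tC\lgmod$ be a Koszul $\tC$-module. Proposition \ref{restriction of Koszul modules} reduces the Koszulity of $\mathrm{E}(M)$ over $\tY$ to the Koszulity of the restriction $1_{\tV} \mathrm{E}(M)$ over $1_{\tV} \tY 1_{\tV} = \tY_{\tV}$ for each finite convex $\tV$ containing $\supp(M_0)$. The preceding lemma rewrites $1_{\tV} \mathrm{E}(M)$ as $\mathrm{E}_{\tV}(1_{\tV} M)$, where $\mathrm{E}_{\tV}$ is the classical Koszul duality functor for $\tV$, and $1_{\tV} M$ is a Koszul $\tV$-module by Proposition \ref{restriction of Koszul modules}. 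Classical BGS theory then gives that $\mathrm{E}_{\tV}(1_{\tV} M)$ is Koszul over $\tY_{\tV}$. The analogous statement holds for $\mathrm{F}$, and the natural isomorphisms $\mathrm{F} \circ \mathrm{E} \cong \Id$ and $\mathrm{E} \circ \mathrm{F} \cong \Id$ on Koszul modules are obtained by checking them on each $\tV$ and gluing.

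The main obstacle I anticipate is to verify that the local identifications of $1_{\tV} \tY 1_{\tV}$ with $\tY_{\tV}$, and of $1_{\tV} \mathrm{E}(M)$ with $\mathrm{E}_{\tV}(1_{\tV} M)$, are compatible with further restriction to smaller convex subcategories, so that they assemble into honest natural isomorphisms of graded categories and of functors rather than merely objectwise isomorphisms. This is essentially a bookkeeping check about the functoriality of the Yoneda and Ext constructions under restriction, but it underpins every gluing step and is what upgrades the classical finite-dimensional Koszul duality to the infinite directed setting.
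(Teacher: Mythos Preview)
Your approach is essentially the paper's: reduce to finite convex full subcategories via Proposition \ref{restriction of Koszul modules}, Theorem \ref{restriction of Koszul categories}, and the preceding lemma, invoke classical BGS Koszul duality there, and glue. The paper does exactly this for the Koszulity of $\mathrm{E}M$ and for $\mathrm{F}\mathrm{E}\cong\Id$, so your plan is sound.

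Two points where the paper differs slightly from your outline. First, the paper does not glue the local isomorphisms $\tY_{\tY_{\tV}}\cong\tV$ to obtain the double-Yoneda statement; instead it invokes Proposition \ref{yoneda vs quadratic dual}, which gives $\tY\cong(\tC^!)^{\op}$ globally and hence, applied twice, yields the Yoneda category of $\tY$ isomorphic to $\tC$ directly. This bypasses precisely the coherence check you flag as the main obstacle for that part of the argument. Second, the paper explicitly verifies that $\mathrm{E}M\in\tY\lgmod$ by observing that $(\mathrm{E}M)_i$ and $(\Omega^iM)_i$ are supported on the same objects; you should include this, since Proposition \ref{restriction of Koszul modules} requires $\mathrm{E}M$ to be lower bounded before it can be applied. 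Otherwise your proposal matches the paper's proof.
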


\begin{proof}
We use Proposition \ref{restriction of Koszul modules}, Theorem \ref{restriction of Koszul categories}, and the previous lemma to reduce to the case of finite categories.

Let $M \in \tC \lgmod$ be a Koszul $\tC$-module. Then $\mathrm{E} M$ is a graded $\tY$-module. Using the isomorphism
\begin{equation*}
\Ext ^i _{\tC \gmod} (M, \tC_0\langle i \rangle) \cong \Hom _{\tC \gmod} ((\Omega ^i M)_i, \tC_0\langle i \rangle) ,
\end{equation*}
one sees that $(\Omega^i M)_i$ and $(\mathrm{E} M)_i$ are supported on the same objects. In particular, $\dim_{\bk} (\mathrm{E} M)_i < \infty$ for $i \geqslant 0$ since $\Omega^i M \in \tC \lgmod$. Thus, $\mathrm{E} M \in \tY \lgmod$.

Let $\tV$ be a finite convex full subcategory of $\tC$ containing $\supp (M_0)$. By Theorem \ref{restriction of Koszul categories}, the category $\tV$ is Koszul. The Yoneda category $\tY_{\tV}$ of $\tV$ is a finite convex full subcategory of $\tY$ containing $\supp ((\mathrm{E} M)_0) = \supp(M_0)$. By Proposition \ref{restriction of Koszul modules}, $M \downarrow _{\tV} ^{\tC}$ is a Koszul $\tY$-module. Therefore, by above lemma, one has
\begin{align*}
\mathrm{E} M \downarrow _{\tY _{\tV}} ^{\tY} & = \bigoplus_{i\geqslant 0} 1_{\tV} \Ext _{\tC \gmod} ^{i} (M, \tC_0\langle i \rangle)\\
& \cong \bigoplus_{i\geqslant 0} \Ext _{\tV \gmod} ^{i} (1_{\tV}M, \tV_0\langle i \rangle),
\end{align*}
which is a Koszul $\tY _{\tV}$-module since $\tV$ is a finite category, see for example \cite[Theorem 1.2.5]{BGS}. Note that $\tV \mapsto \tY _{\tV}$ defines a bijective correspondence between finite convex full subcategories containing $\supp (M_0)$. Therefore, by Proposition \ref{restriction of Koszul modules}, $\mathrm{E}M$ is a Koszul $\tY$-module. In particular, taking $M$ to be $\tC (x, x)$ for $x \in \Ob \tC$, one deduces that $\tY$ is a Koszul category. It follows from Proposition \ref{yoneda vs quadratic dual} that the Yoneda category of $\tY$ is isomorphic $\tC$.

To show $M \cong \mathrm{FE} M$, we define $\tV_n$, for $n\ge 0$,  to be the convex hull of
\begin{equation*}
\supp (\bigoplus _{i=0}^n  M_i) \cup \supp (\bigoplus _{i=0}^n  (\mathrm{FE} M)_i).
\end{equation*}
Since the category $\tV_n$ contains only finitely many objects, it follows (for example from \cite[Theorem 4.1]{Li2}) that there is an isomorphism $\varphi_n: 1_{\tV_n} M  \cong 1_{\tV_n} (\mathrm{FE} M)$. Moreover, these isomorphisms are compatible with the restriction functors.
Thus, there is an isomorphism $\varphi: M \to \mathrm{FE} M$ such that the restriction of $\varphi$ to $\tV_n$ is $\varphi_n$ for $n\ge 0$. Similarly, one has $N \cong \mathrm{EF} N$ for any Koszul module $N\in \tY\lgmod$.
\end{proof}

Following \cite{BGS}, we let $C^{\uparrow} (\tC \gmod)$ be the category of complexes $M^{\bullet}$ of graded $\tC$-modules such that there exist $r, s \in \Z$ satisfying
\begin{equation*}
M^i_j = 0 \text{ if } i>r \text{ or } i+j < s.
\end{equation*}
Dually, we define $C^{\downarrow} (\tC \gmod)$ be the category of complexes $M^{\bullet}$ of graded $\tC$-modules such that there exist $r, s \in \Z$ satisfying
\begin{equation*}
M^i_j = 0 \text{ if } i<r \text{ or } i+j > s.
\end{equation*}
Let $D^{\uparrow} (\tC \gmod)$ and $D^{\downarrow} (\tC \gmod)$ be their corresponding derived categories.

We omit the proof of the following derived equivalence which is the same as \cite[Theorem 30]{Mazorchuk} (see also \cite[Theorem 2.12.1]{BGS}).

\begin{theorem}  \label{koszul derived equivalence}
Let $\tC$ be a directed graded $\bk$-linear category. If $\tC$ is Koszul, then there exists an equivalence of triangulated categories
\begin{equation*}
 \mathrm{K}: D^{\downarrow} (\tC \gmod)  \longrightarrow D^{\uparrow} (\tC^! \gmod) .
\end{equation*}
\end{theorem}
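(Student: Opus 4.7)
The plan is to adapt the classical construction of the Koszul duality functor from \cite[Section 2.12]{BGS} and \cite[Theorem 30]{Mazorchuk} to the categorical setting at hand, the main point being to verify that all constructions remain well-defined when $\Ob(\tC)$ is infinite. Throughout, I will rely on the fact that $\tC$ is quadratic (Proposition \ref{Koszul categories are quadratic}), so that the pairing between $\tC_1$ and $\DD\tC_1$ interacts compatibly with the quadratic relations and their duals.

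First I would define $\mathrm{K}$ on a single graded $\tC$-module $M\in\tC\gmod$ by setting, for each $z\in \Ob(\tC)$,
\begin{equation*}
\mathrm{K}(M)^{-n}(z) = \bigoplus_{y\in \Ob(\tC)} \tC^!(y,z)_n \otimes_{\tC(y,y)_0} M(y),
\end{equation*}
regarded as a graded $\tC^!$-module via left multiplication, with internal grading shifted so that the summand indexed by $M(y)_{j-n}$ sits in internal degree $j$. The differential $d: \mathrm{K}(M)^{-n}\to \mathrm{K}(M)^{-n+1}$ is given by contraction against the canonical element of $\tC_1 \otimes_{\tC_0} \DD\tC_1$ determined by the identity map on $\tC_1$ (using condition (P4) and the fact that $\DD\tC_1$ is dual over each $\tC(y,y)_0$). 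The identity $d^2=0$ is precisely the statement that the image of the composition $\tC_1 \otimes_{\tC_0}\tC_1 \to \tC_2$ annihilates the relations $I$ defining $\tC^!$, which is built into the definition of $\tC^!$ in Subsection \ref{subsection on quadratic categories}. One then extends $\mathrm{K}$ to complexes by totalization.

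Next I would show that $\mathrm{K}$ actually lands in $C^{\uparrow}(\tC^!\gmod)$ and passes to the derived category. The bound $M^i_j = 0$ for $i+j>s$ in $C^{\downarrow}(\tC\gmod)$ translates under $\mathrm{K}$ into the bound $i+j<s'$ of $C^{\uparrow}(\tC^!\gmod)$ thanks to the degree shift built into the definition. To see $\mathrm{K}$ sends quasi-isomorphisms to quasi-isomorphisms, I would reduce to showing that it sends the acyclic complex arising from the minimal linear projective resolution of $\tC_0(x,x)$ to an acyclic complex in $\tC^! \gmod$; this is exactly the content of Koszulity, since by Proposition \ref{yoneda vs quadratic dual} the projective resolutions of simple modules are built from $\tC^!$. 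A quasi-inverse $\mathrm{K}'$ is constructed in the same way starting from a Koszul $\tC^!$-module, using $(\tC^!)^! \cong \tC$, and the compositions $\mathrm{K}'\mathrm{K}$ and $\mathrm{K}\mathrm{K}'$ are shown to be isomorphic to the identity by the standard computation on linear complexes of projectives.

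The hard part, and the only place where we genuinely diverge from the finite case, is ensuring that all direct sums defining $\mathrm{K}(M)^{-n}(z)$ are locally finite, so that $\mathrm{K}(M)$ indeed belongs to $\tC^!\gmod$ and the homotopy arguments converge. This is handled as follows. By Proposition \ref{Koszul categories are quadratic} and the construction of $\tC^!$, the category $\tC^!$ is itself a directed graded $k$-linear category (with respect to $\leqslant^{\op}$), hence satisfies conditions (P1)--(P8); in particular, Lemma \ref{finitely many y} applies to $\tC^!$, so for each $z$ and $n$ there are only finitely many $y$ with $\tC^!(y,z)_n \neq 0$. Combined with condition (L2) on $M$, each homogeneous component of $\mathrm{K}(M)^{-n}(z)$ is a finite direct sum of finite-dimensional spaces, hence finite-dimensional. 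With these finiteness checks in place, the rest of \cite[Theorem 30]{Mazorchuk} and \cite[Theorem 2.12.1]{BGS} carries over verbatim, establishing the required triangulated equivalence.
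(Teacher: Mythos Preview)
Your proposal is correct and follows the same approach as the paper; indeed, the paper itself omits the proof entirely, stating only that it ``is the same as \cite[Theorem 30]{Mazorchuk} (see also \cite[Theorem 2.12.1]{BGS})'' and referring to \cite[(5.6)]{Mazorchuk} for the construction. Your outline of the functor $\mathrm{K}$, the verification of $d^2=0$ via the quadratic relations, and the local-finiteness check using Lemma~\ref{finitely many y} applied to $\tC^!$ (which is again directed graded, with respect to $\leqslant^{\op}$) are exactly the ingredients those references supply, so there is nothing substantively different to compare.
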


We refer the reader to \cite[(5.6)]{Mazorchuk} for the construction of the equivalence (see also the proof of  \cite[Theorem 2.12.1]{BGS}).

\begin{remark}
The notations for $D^\uparrow$ and $D^\downarrow$ in \cite{Mazorchuk} are opposite to the ones in \cite{BGS}; we have followed the notations in \cite{BGS}. Thus, the above functor $\mathrm{K}$ is the functor which \cite{Mazorchuk} denotes by $\mathrm{K}'$.
\end{remark}

\section{Koszulity  criterion in type $\Ai$} \label{type A-infinity}

\emph{In this section, we assume that $\tC$ is a directed graded $\bk$-linear category of type $\Ai$.}

\subsection{Koszul duality for directed graded $\bk$-linear categories of type $\Ai$}

 We say that a $\tC$-module or a $\tC^!$-module $M$ is \emph{generated in position $x$} if $M$ is generated by $M(x)$. For example, for any $x\in \Z_+$, the graded $\tC$-module $\tC(x,-)$ is generated in position $x$.

\begin{lemma} \label{degree and position}
(1) Let $M$ be a graded $\tC$-module which is generated in degree $i$ for some $i\in\Z$, and suppose that $M_i \subset M(x)$ for some $x\in \Z_+$. Then $M_{i+n}=M(x+n)$ for all $n\in \Z$.

(2) Let $N$ be a graded $\tC^!$-module which is generated in degree $i$ for some $i\in\Z$, and suppose that $N_i \subset N(x)$ for some $x\in \Z_+$. Then $N_{i+n}=N(x-n)$ for all $n\in \Z$.

\end{lemma}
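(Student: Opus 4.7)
The central observation is that in a type-$\Ai$ category, the object index in $\Z_+$ and the internal degree are rigidly linked: $\tC(x,y)$ is nonzero (and concentrated in degree $y-x$) only when $y\geqslant x$. Thus, any module generated by a single $M(x)_i$ inherits an equally rigid correspondence between position and degree. The same will be true on the dual side once one verifies the analogous rigidity for $\tC^!$. My plan is therefore: (a) prove part (1) by tracking how morphisms in $\tC$ act on the generators in $M(x)_i$; (b) verify that $\tC^!(x,y)_j$ is nonzero only when $y=x-j$; (c) deduce part (2) by the same bookkeeping as (a).

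For part (1), the hypotheses give $M = \tC \cdot M_i = \tC(x,-)\cdot M(x)_i$. A morphism $\alpha\in\tC(x,y)$ lives in degree $y-x$ and vanishes for $y<x$, so $\alpha\cdot M(x)_i \subseteq M(y)_{i+(y-x)}$. Hence $M(y)=0$ for $y<x$, and for $y\geqslant x$ the module $M(y)$ is concentrated in the single degree $i+(y-x)$. Splitting
\begin{equation*}
M_{i+n} \;=\; \bigoplus_{y\in\Z_+} M(y)_{i+n},
\end{equation*}
the only summand that can be nonzero is the $y=x+n$ term, which yields $M_{i+n}=M(x+n)$ (with the convention $M(x+n)=0$ when $x+n\notin\Z_+$, in which case both sides vanish).

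For part (2), I first establish the dual rigidity. By the construction in Section \ref{subsection on quadratic categories}, $\tC^!$ is a quotient of $\check{\tC} = \tC_0[\DD\tC_1]$. Since $\tC(a,b)_1\neq 0$ forces $b=a+1$, the generators of $\check{\tC}_1$ are the dual bimodules $\DD\tC(a,a+1)_1$, which by construction live in $\check{\tC}(a+1,a)_1$. A morphism of degree $j$ in $\check{\tC}(x,y)$, and hence in $\tC^!(x,y)$, is therefore a $k$-linear combination of compositions of $j$ such generators along a chain $y=y_0,\,y_0+1,\,\ldots,\,y_0+j=x$, so $\tC^!(x,y)_j\neq 0$ forces $y=x-j$. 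With this in hand, the argument of part (1) applies verbatim, except that morphisms in $\tC^!(x,y)$ now shift position downward by $x-y$ while shifting degree upward by the same amount; hence $N(y)=0$ for $y>x$, and $N(x-n)$ is concentrated in degree $i+n$ for $n\geqslant 0$, giving $N_{i+n}=N(x-n)$ for all $n\in\Z$.

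The only step with any real content is the verification of the rigidity of $\tC^!$, and even this is immediate from the structure of $\tC$ as a type-$\Ai$ category together with the explicit construction of the quadratic dual. No obstacle is anticipated; the lemma is essentially a bookkeeping statement encoding the fact that in type $\Ai$ (and its dual) position and degree determine one another once a single generator is fixed.
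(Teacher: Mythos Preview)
Your argument is correct and follows essentially the same approach as the paper. The paper's proof of (1) is the terse chain $M_{i+n}=\tC_n M_i\subset\tC_n M(x)=\tC(x,x+n)M(x)\subset M(x+n)$, followed by noting that the two decompositions $\bigoplus_n M_{i+n}$ and $\bigoplus_n M(x+n)$ both equal $M$, forcing equality termwise; you instead first pin down each $M(y)$ as concentrated in degree $i+(y-x)$ and then read off $M_{i+n}$, which is the same idea unpacked. For (2) the paper simply asserts $\tC^!_n N(x)=\tC^!(x,x-n)N(x)$ and says ``similar''; your explicit verification via the tensor-algebra description of $\check{\tC}$ is exactly what justifies that assertion.
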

\begin{proof}
(1) One has:
\begin{equation*}
M_{i+n} = \tC_n M_i \subset \tC_n M(x) = \tC(x,x+n) M(x) \subset M(x+n) .
\end{equation*}
But
\begin{equation*}
\bigoplus_{n\in\Z} M_{i+n} = \bigoplus_{n\in\Z} M(x+n) ,
\end{equation*}
so $M_{i+n}=M(x+n)$ for each $n$.

(2) The proof is similar to (1). Note that $\tC^!_n N(x) = \tC^!(x,x-n) N(x)$.
\end{proof}

\begin{lemma} \label{koszul by position}
Let $M\lgmod$ be a lower bounded $\tC$-module generated in degree 0 and suppose that $M_0=M(x)$ for some $x\in \Z_+$. Then $M$ is Koszul if and only if for every $n \geqslant 0$, the syzygy $\Omega^n M$ is generated in position $x+n$.
\end{lemma}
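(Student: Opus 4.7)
The plan is to exploit the type $\Ai$ grading via Lemma~\ref{degree and position}(1): once we know a lower bounded graded module is generated in a single degree with all degree-$i$ generators concentrated at a single position $x$, then $M(y)$ is automatically concentrated in degree $y-x$, so ``degree'' and ``position'' shift together lockstep. The strategy is to prove, by induction on $n\geqslant 0$, that both conditions (Koszulity up to step $n$, and generation of $\Omega^n M$ in position $x+n$) are equivalent to the single statement that $(\Omega^n M)_n = (\Omega^n M)(x+n)$, so that they collapse into each other.

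\textbf{Base case.} Since $M_0 = M(x)$ by hypothesis, Lemma~\ref{degree and position}(1) applies (with $i=0$) and gives $M_n = M(x+n)$ for all $n$, so each $M(y)$ is concentrated in degree $y-x$. In particular $M$ is generated in degree $0$ iff it is generated in position $x$.

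\textbf{Inductive step.} Suppose that $\Omega^{n-1}M$ satisfies $(\Omega^{n-1}M)_{n-1} = (\Omega^{n-1}M)(x+n-1)$, and that this space is nonzero. The projective cover $P^{-(n-1)}\to \Omega^{n-1}M$ has its degree-$(n-1)$ part isomorphic to $(\Omega^{n-1}M)_{n-1}$ and is generated there; combining this with Lemma~\ref{degree and position}(1) again, $P^{-(n-1)}(y)$ is concentrated in degree $y-x$ for every $y\geqslant x+n-1$. Since $\Omega^n M$ is a submodule of $P^{-(n-1)}$, the piece $(\Omega^n M)(y)$ is itself concentrated in degree $y-x$. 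Taking $y=x+n$ yields $(\Omega^n M)(x+n)\subset (\Omega^n M)_n$, and conversely $(\Omega^n M)_n \subset P^{-(n-1)}_n = P^{-(n-1)}(x+n)$, giving $(\Omega^n M)_n = (\Omega^n M)(x+n)$.

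Consequently, a set of generators for $\Omega^n M$ in degree $n$ is the same thing as a set of generators in position $x+n$, so ``$\Omega^n M$ is generated in degree $n$'' and ``$\Omega^n M$ is generated in position $x+n$'' are equivalent. In either direction (Koszul $\Rightarrow$ position, or position $\Rightarrow$ Koszul), this equivalence feeds the induction: knowing one of the two properties at step $n$ lets us invoke Lemma~\ref{degree and position}(1) for $\Omega^n M$ and thereby enter the inductive step for $n+1$.

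\textbf{Main obstacle.} There is no real obstacle beyond bookkeeping; the content of the lemma is really the observation that, in type $\Ai$, position and homological degree are rigidly tied via the grading. The only subtle point is to set up the induction so that the concentration of $(\Omega^n M)_n$ at position $x+n$ is established together with, rather than as a consequence of, the degree-of-generation statement, so that both directions of the iff can be run from the same computation.
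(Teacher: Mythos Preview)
Your overall strategy is the same as the paper's, and the key computation---showing that concentration $(\Omega^{n-1}M)_{n-1}=(\Omega^{n-1}M)(x+n-1)$ propagates to step $n$ via the projective cover---is exactly what the paper isolates as its ``Claim.''  There is, however, a genuine gap in your inductive step as written: from the hypothesis $(\Omega^{n-1}M)_{n-1}=(\Omega^{n-1}M)(x+n-1)$ alone you \emph{cannot} conclude that $P^{-(n-1)}$ is generated in degree $n-1$.  Proposition~\ref{projective-cover} only gives that $P^{-(n-1)}_i=0$ for $i<n-1$ and that $P^{-(n-1)}_{n-1}\cong(\Omega^{n-1}M)_{n-1}$; to get $P^{-(n-1)}$ generated in degree $n-1$ you must also assume that $\Omega^{n-1}M$ is generated in degree $n-1$.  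Without that, Lemma~\ref{degree and position}(1) does not apply to $P^{-(n-1)}$ and the rest of the paragraph does not go through.

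Your final paragraph makes clear that you intend to carry the generation hypothesis along the induction (supplied by whichever side of the iff you are proving), so the fix is easy: state the inductive hypothesis as ``$\Omega^{n-1}M$ is generated in degree $n-1$ \emph{and} $(\Omega^{n-1}M)_{n-1}=(\Omega^{n-1}M)(x+n-1)$.''  This is precisely how the paper phrases its Claim.  With that correction your argument is complete; the paper then handles the ``not Koszul $\Rightarrow$ not position-generated'' direction by contrapositive (take the minimal $m$ where degree-generation fails, run the Claim up to $m$, and observe position-generation fails there too), which is logically equivalent to your direct two-sided induction.
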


\begin{proof}
Recall that $M$ is Koszul if and only if for every $n \geqslant 0$, the syzygy $\Omega^n M$ is generated in degree $n$. Let us first prove the following statement.

\medskip

{\it Claim}: If $\Omega^n M$ is generated in degree $n$ and $(\Omega^n M)_n = (\Omega^n M)(x+n)$, then $(\Omega^{n+1} M)_{n+1} = (\Omega^{n+1} M)(x+n+1)$.

\medskip

To prove the claim, let $\pi: P\to \Omega^n M$ be a projective cover of $\Omega^n M$. By Proposition \ref{projective-cover}, $P$ is generated in degree $n$, and $\pi: P_n \to (\Omega^n M)_n$ is bijective. Since $(\Omega^n M)_n = (\Omega^n M)(x+n)$, it follows that $P_n \subset P(x+n)$, and hence by Lemma \ref{degree and position} one has $P_{n+1} = P(x+n+1)$ and $(\Omega^n M)_{n+1} = (\Omega^n M)(x+n+1)$. Therefore,
\begin{align*}
(\Omega^{n+1} M)_{n+1} &= \Ker (\pi : P_{n+1}\to (\Omega^n M)_{n+1}) \\ &= \Ker (\pi : P(x+n+1) \to (\Omega^n M)(x+n+1) ) \\ &= (\Omega^{n+1} M)(x+n+1).
\end{align*}

Now we return to the proof of the lemma. If $M$ is Koszul, then it follows  by induction on $n$ using the above claim that $\Omega^n M$ is generated in position $x+n$ for all $n\ge 0$. If $M$ is not Koszul, there is a \emph{minimal} $m \ge 1$ such that $\Omega^m M$ is not generated by $(\Omega^m M)_m$, and it follows by induction on $n$ using the above claim that $(\Omega^n M)_n = (\Omega^n M)(x+n)$ for $n=1,\ldots, m$. This implies that $\Omega^m M$ is not generated by $(\Omega^m M)(x+m)$.
\end{proof}

\begin{notation}
For any $\tC$-module $M$, let
\begin{equation*}
\ini (M) = \inf \{x \in \Z_+ \mid M(x) \neq 0 \},
\end{equation*}
where by convention $\inf \emptyset = \infty$.

For any $\tC^!$-module $N$, let
\begin{equation*}
\ini (N) = \sup \{x \in \Z_+ \mid N(x) \neq 0 \},
\end{equation*}
where by convention $\sup \emptyset = -\infty$.
\end{notation}

\begin{lemma} \label{initial position}
(1) Let $M \in \tC \lgmod$ be a lower bounded $\tC$-module. Then one has
\begin{equation*}
\ini(\Omega^{n+1} M) \geqslant \ini(\Omega^n M) + 1 \quad\mbox{ for each }n\in \Z_+.
\end{equation*}
(2) Let $N \in \tC \lgmod$ be a lower bounded $\tC^!$-module. Then one has
\begin{equation*}
\ini (\Omega^{n+1} N) \leqslant \ini (\Omega^n N) -1  \quad\mbox{ for each }n\in \Z_+.
\end{equation*}
\end{lemma}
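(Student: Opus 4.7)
The plan is to handle both parts by the same mechanism, exploiting the very rigid support structure of representable modules in type $\Ai$: each indecomposable projective $\tC(z,-)\otimes_{\tC(z,z)_0} W$ is supported precisely on positions $\geqslant z$ (since $\tC(z,y)=0$ for $y<z$), and dually each indecomposable projective $\tC^!(z,-)\otimes_{\tC^!(z,z)_0}W$ is supported precisely on positions $\leqslant z$. Combined with minimality of a projective cover, this forces the syzygy to strictly shrink the extremal position.

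For part (1), I would set $x:=\ini(\Omega^n M)$ and first argue that the top of $\Omega^n M$ at position $x$ coincides with all of $(\Omega^n M)(x)$. Indeed, any element of the radical at position $x$ would have to come from the image of some $(\Omega^n M)(y)$ with $y<x$ under a morphism of positive degree, and these source spaces vanish by definition of $\ini$. Invoking Proposition~\ref{projective-cover}, the projective cover $\pi:P\twoheadrightarrow \Omega^n M$ decomposes (using (P4) and semisimplicity of each $\tC(z,z)_0$) into summands of the form $\tC(z,-)\otimes_{\tC(z,z)_0}W_z\langle j\rangle$ whose generators map to a $k$-basis of the top. In particular only positions $z$ with $(\Omega^n M)(z)\neq 0$ can appear, so every generator lies at some $z\geqslant x$. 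Consequently $P(y)=0$ for $y<x$, and at $y=x$ only the summand generated at $x$ itself contributes, so $P(x)=W_x\to(\Omega^n M)(x)$ is bijective. Therefore $\Omega^{n+1}M(y)\subset P(y)$ vanishes for $y<x$ and $\Omega^{n+1}M(x)=\Ker(P(x)\to(\Omega^n M)(x))=0$, yielding $\ini(\Omega^{n+1}M)\geqslant x+1$.

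For part (2), the argument is formally identical once one tracks the reversed partial order on $\tC^!$ and recalls that for $\tC^!$-modules $\ini(N)=\sup\{x:N(x)\neq 0\}$. Setting $x:=\ini(\Omega^n N)$, the radical of $\Omega^n N$ at position $x$ is generated by images from positions $>x$ (the ``lower'' direction in $\leqslant^{\op}$), which vanish by definition of $\ini$; hence the top at $x$ equals $(\Omega^n N)(x)$. A projective cover then has generators only at positions $\leqslant x$, giving $P(y)=0$ for $y>x$ and $P(x)\to(\Omega^n N)(x)$ bijective, from which $\ini(\Omega^{n+1}N)\leqslant x-1$.

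I don't foresee any genuine obstacle here; everything reduces to the trivial fact that a representable module in type $\Ai$ is supported on exactly the half-line determined by its generator, plus the standard minimality property of projective covers. The only subtlety is a purely bookkeeping one: remembering to flip the order in the $\tC^!$ case and to interpret $\ini$ as a supremum rather than an infimum.
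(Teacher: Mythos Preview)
Your proposal is correct and takes essentially the same approach as the paper: both arguments reduce to showing that the projective cover $\pi:P\to\Omega^n M$ has $P(y)=0$ for $y<x:=\ini(\Omega^n M)$ and is bijective at position $x$. The paper does this in one line by invoking minimality of $P$ directly (a strictly smaller submodule would still surject onto $\Omega^n M$), whereas you reach the same conclusion more structurally by decomposing $P$ into representable summands and tracking where the generators lie; part~(2) is handled identically in both.
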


\begin{proof}
(1) Let $\pi: P\to \Omega^n M$ be the projective cover of $\Omega^n M$. Let $x=\ini (\Omega^n M)$. Since $P$ has no strictly smaller submodule whose image is $\Omega^n M$, it follows that $\pi: P(x)\to (\Omega^n M)(x)$ must be bijective. Hence $(\Omega^{n+1} M)(x)=0$.

(2) The proof is completely similar to (1).
\end{proof}

A graded $\tC$-module or a graded $\tC^!$-module $M$ is said to be \emph{finite dimensional} if $\bigoplus_{i\in\Z} M_i$ is finite dimensional. We denote by $\tC\fdmod$ (respectively $\tC^!\fdmod$) the category of graded $\tC$-modules (respectively graded $\tC^!$-modules) which are finite dimensional.

We note that for any graded $\tC^!$-module $N$:
\begin{equation*}
\mbox{ $N$ is lower bounded } \Leftrightarrow \mbox{ $N$ is finitely generated } \Leftrightarrow \mbox{ $N$ is finite dimensional. }
\end{equation*}

\begin{corollary} \label{finite projective dim}
Let $N$ be a finite dimensional graded $\tC^!$-module. Then $N$ has finite projective dimension.
\end{corollary}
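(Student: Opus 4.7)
The plan is to iterate Lemma~\ref{initial position}(2) until the syzygy must vanish, exploiting the fact that $\ini$ of any nonzero graded $\tC^!$-module is a non-negative integer.

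First, since $N$ is finite dimensional, it is in particular lower bounded, so by the corollary following Proposition~\ref{projective-cover} the syzygies $\Omega^n N$ are well-defined and remain in $\tC^!\lgmod$ for every $n \geqslant 0$. If $N = 0$ there is nothing to prove, so assume $N \neq 0$ and set $s = \ini(N)$; because $N$ is supported on a nonempty finite subset of $\Z_+$, the integer $s$ is non-negative and finite.

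Next, applying Lemma~\ref{initial position}(2) inductively yields $\ini(\Omega^n N) \leqslant s - n$ for every $n \geqslant 0$. Taking $n = s+1$ gives $\ini(\Omega^{s+1} N) \leqslant -1$. But $\ini$ of a $\tC^!$-module is defined as a supremum over a subset of $\Z_+$, with the convention $\sup\emptyset = -\infty$; hence its value is either a non-negative integer (if the module is nonzero) or $-\infty$ (if the module is zero). The bound $\ini(\Omega^{s+1} N) \leqslant -1$ therefore forces $\Omega^{s+1} N = 0$. Consequently the projective cover $P^{-s} \twoheadrightarrow \Omega^s N$ has trivial kernel, so $\Omega^s N$ is projective, and $N$ admits a finite projective resolution of length at most $s = \ini(N)$.

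There is essentially no obstacle in this argument: it is a direct application of Lemma~\ref{initial position}(2) combined with the bounded-below structure of $\Z_+$. The conceptual content is that this lower-boundedness breaks the apparent symmetry between parts (1) and (2) of Lemma~\ref{initial position}: syzygies of a $\tC^!$-module are pushed toward smaller positions and are eventually forced out of $\Z_+$, forcing them to vanish, whereas syzygies of a $\tC$-module march upward in $\Z_+$ indefinitely and give no analogous finiteness statement.
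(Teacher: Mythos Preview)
Your proof is correct and follows essentially the same approach as the paper: the paper's proof is the single line ``By Lemma~\ref{initial position}, one has $\Omega^n N = 0$ for all $n$ sufficiently large,'' and your argument simply makes this explicit by tracking $\ini(\Omega^n N)$ down from $s=\ini(N)$ until it falls below $0$. Your more careful accounting even yields the explicit bound $\mathrm{pd}\,N \leqslant \ini(N)$, which the paper leaves implicit.
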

\begin{proof}
By Lemma \ref{initial position}, one has $\Omega^n N = 0$ for all $n$ sufficiently large.
\end{proof}

The next corollary is immediate from Theorem \ref{koszul derived equivalence} and Corollary \ref{finite projective dim}; see the proof of \cite[Theorem 2.12.6]{BGS}.

\begin{corollary}  \label{koszul duality functor on bounded derived categories}
Suppose that $\tC$ is Koszul. Then the equivalence in Theorem \ref{koszul derived equivalence} induces an equivalence of triangulated categories
\begin{equation*}
 \mathrm{K} : D^b(\tC\fdmod) \longrightarrow D^b(\tC^!\fdmod).
\end{equation*}
\end{corollary}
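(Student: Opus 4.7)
The plan is to verify that the equivalence $\mathrm{K}$ of Theorem \ref{koszul derived equivalence} restricts to an equivalence between the bounded derived categories of finite-dimensional modules. The strategy is to check on both sides that the relevant bounded subcategories are preserved by $\mathrm{K}$ and $\mathrm{K}^{-1}$, with the key inputs being the explicit behavior of $\mathrm{K}$ on simples and indecomposable projectives together with the finite projective dimension statement of Corollary \ref{finite projective dim}.

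First I would observe that a finite dimensional graded $\tC$-module, viewed as a complex concentrated in cohomological degree $0$, lies in $D^\downarrow(\tC\gmod)$, since both $i$ and $i+j$ are bounded. Hence there is a natural fully faithful triangulated embedding $D^b(\tC\fdmod)\hookrightarrow D^\downarrow(\tC\gmod)$, whose image is the thick triangulated subcategory generated, under cohomological and grading shifts, by the semisimple modules $\tC(x,x)_0$ for $x\in\Z_+$, since every finite dimensional module is an iterated extension of the simple summands of these. The analogous embedding and generation statement hold for $D^b(\tC^!\fdmod)\hookrightarrow D^\uparrow(\tC^!\gmod)$.

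Next I would compute $\mathrm{K}$ and $\mathrm{K}^{-1}$ on these generators. From the explicit construction of $\mathrm{K}$ in \cite[(5.6)]{Mazorchuk} (and its finite analogue in \cite[Section 2.12]{BGS}), $\mathrm{K}$ sends $\tC(x,x)_0$ to a grading and cohomological shift of the indecomposable projective $\tC^!(x,-)$, and dually $\mathrm{K}^{-1}$ sends $\tC^!(x,-)$ to a shift of $\tC(x,x)_0$. The crucial observation is that $\tC^!(x,-)$ is finite dimensional: since $\tC^!$ is directed with respect to the opposite order, Lemma \ref{degree and position}(2) shows $\tC^!(x,-)$ is supported at positions $\leqslant x$; condition (P8) forces only finitely many such positions to occur; and condition (P1) makes each component finite dimensional (this is precisely the equivalence ``finitely generated $\Leftrightarrow$ finite dimensional'' noted just before Corollary \ref{finite projective dim}). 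Hence $\mathrm{K}(\tC(x,x)_0)\in D^b(\tC^!\fdmod)$, and by the thick subcategory argument $\mathrm{K}(D^b(\tC\fdmod))\subseteq D^b(\tC^!\fdmod)$.

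For the reverse inclusion I would invoke Corollary \ref{finite projective dim}: each generator $\tC^!(x,x)_0$ of $D^b(\tC^!\fdmod)$ admits a \emph{finite} projective resolution by indecomposable projectives $\tC^!(y,-)$. Applying $\mathrm{K}^{-1}$ term by term and using that $\mathrm{K}^{-1}(\tC^!(y,-))$ is a shift of $\tC(y,y)_0$, one obtains a bounded complex of shifted semisimple generators in $D^\downarrow(\tC\gmod)$, which lies in $D^b(\tC\fdmod)$; hence $\mathrm{K}^{-1}(D^b(\tC^!\fdmod))\subseteq D^b(\tC\fdmod)$. Combined with the previous inclusion and the fact that $\mathrm{K}$ is already an equivalence on the ambient derived categories, this yields the desired equivalence. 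The main subtlety, and where I expect the real work to lie, is the asymmetry between the two sides: the naive image $\tC(x,-)$ of $\tC^!(x,x)_0$ under $\mathrm{K}^{-1}$ is \emph{not} finite dimensional (in the $\cFI$ example it has infinitely many nonzero components), so Corollary \ref{finite projective dim} is indispensable for replacing the simple by its finite projective resolution before transporting across $\mathrm{K}^{-1}$.
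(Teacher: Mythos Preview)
Your proposal is correct and follows essentially the same approach as the paper, which simply cites Corollary~\ref{finite projective dim} and refers to the proof of \cite[Theorem 2.12.6]{BGS}; you have spelled out precisely what that citation entails, correctly identifying that the asymmetry between the two sides is resolved by the finite projective dimension of $\tC^!$-modules. One small point you may wish to justify more carefully is the full faithfulness of the embedding $D^b(\tC\fdmod)\hookrightarrow D^\downarrow(\tC\gmod)$: since $\tC\fdmod$ does not have enough projectives, this is not entirely formal, but it follows for instance from the observation that $\tC\fdmod$ has enough injectives (the graded duals $\tC(-,x)^*$, which are finite dimensional since $\tC$ is of type $\Ai$), so that $\Ext$-groups computed in $\tC\fdmod$ and in $\tC\gmod$ agree.
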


\begin{remark}
We point out that the functor $K$ in the above theorem commutes with the cohomological degree shift functor (defined in Notation 3.14) in the derived category, but does not commute with the internal degree shift functor (defined in Notation 2.4); see \cite[Theorem 22 (ii)]{Mazorchuk}. Moreover, the category of finitely generated graded $\tC$-modules is not contained in $D^{\downarrow}(\tC\gmod)$. In \cite[Theorem 2.12.6]{BGS}, in their notations, a finitely generated $A$-module is always finitely generated over $\bk$; on the other hand, in our case, a finitely generated $\tC^!$-module is always finite dimensional.
\end{remark}

\subsection{Koszulity criterion}
In this subsection, we prove our main result, Theorem \ref{main-theorem-3}. We remind the reader that $\tC$ is a directed graded $\bk$-linear category of type $\Ai$.

If a $\bk$-linear functor $\iota: \tD \to \tC$ is injective on the set of objects and faithful, and $M$ is a $\tC$-module, we denote by $M\rest$ the $\tD$-module $M\circ\iota$ and call it the \emph{restriction} of $M$ to $\tD$.

\begin{definition}  \label{genetic functor}
We call a functor $\iota: \tD \to \tC$ a \emph{genetic functor} if it is a $\bk$-linear functor that satisfies the following conditions:

\begin{itemize}
\item[(F1)]
$\tD$ is a directed graded $\bk$-linear category of type $\Ai$;

\item[(F2)]
$\iota$ is faithful and $\iota(x)=x+1$ for all $x\in \Z_+$;

\item[(F3)]
for each $x\in \Z_+$, the $\tD$-module $\tC(x,-)\rest$ is projective and generated in positions $\leqslant x$.
\end{itemize}
\end{definition}

\begin{remark}
Suppose that $M$ is a module of a directed graded $\bk$-linear category of type $\Ai$. We say that $M$ is generated in positions $\leqslant x$ if $\bigcup_{y\leqslant x} M(y)$ is a set of generators of $M$.
\end{remark}

We refer the reader to \cite[Definition 2.7]{CEFN} for an example of a genetic functor from the category $\cFI$ to itself, see \cite[Proposition 2.12]{CEFN}.\footnote{The positive degree shift functor $S_{+a}$ in \cite[Definition 2.8]{CEFN} for $a=1$ is the restriction functor $\rest$.} The key observation of our present paper is that conditions (F1)--(F3) when $\tD=\tC$ allow one to prove by an induction argument that $\tC$ is Koszul.

The following lemma explains our choice of name for the functor.
\begin{lemma}
Let $\iota: \tD \to \tC$ be a genetic functor. If $P\in \tC\gmod$ is a projective graded $\tC$-module generated in positions $\leqslant x$ for some $x\in\Z_+$, then $P\rest$ is a projective graded $\tD$-module generated in positions $\leqslant x$.
\end{lemma}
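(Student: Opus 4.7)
The plan is to decompose $P$ into shifted representables and then apply condition (F3) summand by summand. By the projective-cover construction in the proof of Proposition \ref{projective-cover}, every projective object in $\tC\gmod$ is isomorphic to a direct sum of modules of the form
\begin{equation*}
P_{y,j,W} \;=\; \tC(y,-)\langle j\rangle \otimes_{\tC(y,y)_0} W,
\end{equation*}
where $y\in\Z_+$, $j\in\Z$, and $W$ is a $\tC(y,y)_0$-module; so write $P \cong \bigoplus_\alpha P_{y_\alpha, j_\alpha, W_\alpha}$. The first step is to show that every $y_\alpha \leqslant x$.

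For this, note that since $\tC$ is directed and of type $\Ai$, one has $P_{y_\alpha, j_\alpha, W_\alpha}(z) = 0$ whenever $z < y_\alpha$. If some $y_\alpha > x$, the corresponding summand vanishes at every position $\leqslant x$. But if $S \subset \bigcup_{y\leqslant x} P(y)$ is a generating set for $P$, then the image of $S$ under the surjective projection onto this summand must generate it, and that image is necessarily zero, forcing the summand itself to vanish. Hence we may assume $y_\alpha \leqslant x$ for every nonzero summand.

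Next, I would verify that restriction commutes with the construction:
\begin{equation*}
P_{y,j,W}\rest \;\cong\; (\tC(y,-)\rest)\langle j\rangle \otimes_{\tC(y,y)_0} W,
\end{equation*}
because pullback along $\iota$ is $k$-linear and commutes both with internal degree shift and with tensoring pointwise on the right by a fixed $\tC(y,y)_0$-module. Condition (F3) says $\tC(y,-)\rest$ is projective in $\tD\gmod$ and generated in positions $\leqslant y$; since $\tC(y,y)_0$ is semisimple by (P4), tensoring with $W$ preserves projectivity, and the internal shift preserves both projectivity and the position-generation property.

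Assembling the direct sum, $P\rest$ becomes a direct sum of projective $\tD$-modules, hence projective in $\tD\gmod$ (local finiteness is automatic since each $(P\rest)_i$ embeds into $P_i$), and each summand is generated in positions $\leqslant y_\alpha \leqslant x$, so $P\rest$ itself is generated in positions $\leqslant x$. The substantive work lies in the decomposition step combined with the directedness argument pinning down $y_\alpha \leqslant x$; the remaining verifications are formal.
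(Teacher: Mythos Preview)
Your proof is correct and follows essentially the same approach as the paper. The paper's argument is a terse one-liner: since $P$ is (a direct summand of) a direct sum of representables $\tC(y,-)$ with $y\leqslant x$, and (F3) guarantees each $\tC(y,-)\rest$ is projective and generated in positions $\leqslant y$, the conclusion follows; you have simply spelled out the decomposition and the bound $y_\alpha \leqslant x$ more explicitly.
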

\begin{proof}
Let $y\leqslant x$. By condition (F3), the restriction of any direct summand of $\tC(y,-)$ to $\tD$ is projective and generated in positions $\leqslant y$. The lemma follows.
\end{proof}

\begin{lemma} \label{degree shift}
Let $\iota: \tD \to \tC$ be a genetic functor and let $M$ be a $\tC$-module. Suppose that $\ini(M)\geqslant 1$. If the $\tD$-module $M\rest$ is generated in position $x$ for some $x\in\Z_+$, then the $\tC$-module $M$ is generated in position $x+1$.
\end{lemma}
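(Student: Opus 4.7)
The plan is to first pin down the support of $M$ using directedness, and then translate the $\tD$-generation statement about $M\rest$ into a $\tC$-generation statement about $M$ by carefully tracking the index shift $\iota(y)=y+1$. I would begin by using the fact that $\tD$ is directed and of type $\Ai$: since $M\rest$ is generated by $(M\rest)(x) = M(x+1)$, the smallest $\tD$-submodule of $M\rest$ containing $(M\rest)(x)$ is supported only in positions $\geqslant x$, because every morphism of $\tD$ from $x$ lands in an object $\geqslant x$. Hence $(M\rest)(y) = 0$ for $y<x$, i.e.\ $M(y+1)=0$ for $y<x$, which gives $M(z)=0$ for $1\leqslant z\leqslant x$. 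Combined with $\ini(M)\geqslant 1$, which forces $M(0)=0$, we conclude that $M(z)=0$ for every $z\leqslant x$.

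Next, let $N$ be the $\tC$-submodule of $M$ generated by $M(x+1)$. I want to show $N(z) = M(z)$ for every $z\geqslant x+1$, since $M$ vanishes in lower positions by the previous step. The case $z=x+1$ is immediate. For $z\geqslant x+2$, write $z = y+1$ with $y\geqslant x+1 > x$. Using that $M\rest$ is generated by $(M\rest)(x) = M(x+1)$ as a $\tD$-module, every element of $(M\rest)(y) = M(y+1) = M(z)$ admits an expression $\sum_i \iota(\alpha_i)\cdot m_i$ with $\alpha_i \in \tD(x,y)$ and $m_i\in M(x+1)$. Because $\iota$ is a functor with $\iota(x)=x+1$ and $\iota(y)=y+1$, each $\iota(\alpha_i)$ lies in $\tC(x+1,z)$, so this expression exhibits the element as lying in the image of $\tC(x+1,z)\otimes M(x+1)\to M(z)$, hence in $N(z)$. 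This gives $M(z)\subseteq N(z)$, and the reverse inclusion is automatic.

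There is no real obstacle here beyond bookkeeping: the only subtlety is keeping straight that a position $y$ in $\tD$ corresponds to position $\iota(y)=y+1$ in $\tC$, so $\tD$-generation in position $x$ becomes $\tC$-generation in position $x+1$. The hypothesis $\ini(M)\geqslant 1$ is needed solely to kill $M(0)$, which restriction cannot see because $\iota$ misses the object $0\in\Ob(\tC)$.
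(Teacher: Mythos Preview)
Your proof is correct and follows essentially the same approach as the paper's: both arguments use that $(M\rest)(y)=M(y+1)$ and that the $\tD$-action on $M\rest$ is via $\iota$, so $\tD$-generation in position $x$ gives $M(y+1)=\iota(\tD(x,y))M(x+1)\subset\tC(x+1,y+1)M(x+1)$. The paper compresses this into a single displayed line (leaving the vanishing at position $0$ and at positions $1,\ldots,x$ implicit), whereas you spell out the support analysis and the role of $\ini(M)\geqslant 1$ explicitly.
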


\begin{proof}
For any $y\in \Z_+$, one has $M(y+1)=M\rest(y)$, and hence
\begin{equation*}
M(y+1) = \iota(\tD(x,y))M(x+1) \subset \tC(x+1,y+1) M(x+1) .
\end{equation*}
\end{proof}

\begin{lemma}  \label{crucial lemma}
Let $\iota: \tD \to \tC$ be a genetic functor. Let $M \in \tC \lgmod$ be a lower bounded $\tC$-module which is generated in position $x\in \Z_+$. Then one has:
\begin{equation*}
(\Omega M) \rest \cong \Omega (M \rest) \oplus Q
\end{equation*}
where $Q$ is a projective graded $\tD$-module generated in position $x$. In particular, $(\Omega M) \rest$ is generated in position $x$ if and only if $\Omega (M \rest)$ is generated in position $x$.
\end{lemma}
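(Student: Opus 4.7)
The plan is to build $Q$ as a complementary projective summand of $P\rest$, where $P$ denotes the projective cover of $M$ in $\tC\lgmod$, and then to pin down the location of its generators.

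Because $M$ is generated by $M(x)$ and $\tC(x,x)$ is concentrated in degree $0$ (by the type-$\Ai$ assumption together with condition (P7)), the projective cover of $M$ in $\tC\lgmod$ is explicitly
\[
P \;=\; \tC(x,-)\otimes_{\tC(x,x)} M(x),
\]
so the canonical surjection $\pi:P\to M$ evaluates at position $x$ to the identity map on $M(x)$; in particular $(\Omega M)(x)=0$. Decomposing $M(x)$ into simple $\tC(x,x)$-modules (possible by (P4)) realizes $P\rest$ as a direct sum of direct summands of $\tC(x,-)\rest$. By hypothesis (F3), $\tC(x,-)\rest$ is a projective $\tD$-module generated in positions $\leqslant x$; these two properties are inherited by direct summands and direct sums, so $P\rest$ itself is a projective $\tD$-module generated in positions $\leqslant x$.

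Next, let $P'\to M\rest$ be the projective cover in $\tD\lgmod$, furnished by Proposition \ref{projective-cover}. Projectivity of $P\rest$ lifts the surjection $P\rest\to M\rest$ to a map $P\rest\to P'$, and essentiality of the projective cover forces this lift to be surjective; since $P'$ is projective, it splits, yielding
\[
P\rest\;\cong\;P'\oplus Q
\]
as graded $\tD$-modules, with $Q$ projective and mapping to zero in $M\rest$. Applying the exact restriction functor to $0\to\Omega M\to P\to M\to 0$ and identifying the kernel of $P\rest\to M\rest$ via this splitting gives $(\Omega M)\rest\cong\Omega(M\rest)\oplus Q$.

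Finally, as a direct summand of $P\rest$, $Q$ is generated in positions $\leqslant x$. On the other hand, $Q$ vanishes below position $x$: it vanishes at every $y<x-1$ because $P\rest(y)=P(y+1)=0$ there, and it vanishes at $y=x-1$ because $Q(x-1)$ is a direct summand of $(\Omega M)\rest(x-1)=(\Omega M)(x)=0$. Hence $Q$ is supported at positions $\geqslant x$ while being generated in positions $\leqslant x$, so $Q$ is generated in position $x$. The "in particular" assertion follows at once from the direct-sum decomposition. The substantive point is the vanishing $(\Omega M)(x)=0$, made possible by the clean type-$\Ai$ description of $P$; once that is in hand the rest is formal bookkeeping with the splitting and (F3).
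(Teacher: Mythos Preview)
Your proof is correct and follows essentially the same route as the paper's: restrict the short exact sequence $0\to\Omega M\to P\to M\to 0$, split off the projective cover of $M\rest$ from $P\rest$ to obtain $(\Omega M)\rest\cong\Omega(M\rest)\oplus Q$ with $Q$ a projective summand of $P\rest$, then use the vanishing of $\Omega M$ at position $x$ to force $Q$ to be generated in position exactly $x$. The only cosmetic differences are that the paper cites Lemma~\ref{initial position} for $\ini(\Omega M)\geqslant x+1$ rather than arguing it directly from the explicit description of $P$, and it invokes the preceding lemma for the projectivity and generation bound on $P\rest$ rather than unpacking (F3) inline as you do.
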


\begin{proof}
Let $P \to M$ be the projective cover of $M$. Then $P$ is generated in position $x$. Applying the restriction functor $\rest$ to the short exact sequence
\begin{equation*}
0 \to \Omega M \to P \to M \to 0 ,
\end{equation*}
we obtain a short exact sequence
\begin{equation*}
0 \to (\Omega M) \rest \to P \rest \to M \rest \to 0.
\end{equation*}
Hence,
\begin{equation*}
(\Omega M) \rest \cong \Omega (M \rest) \oplus Q,
\end{equation*}
where $Q$ is isomorphic to a direct summand of $P \rest$. But $P \rest$ is a projective graded $\tD$-module generated in positions $\leqslant x$, so $Q$ is also a projective graded $\tD$-module generated in positions $\leqslant x$.

However, by Lemma \ref{initial position},
\begin{equation*}
\ini (\Omega M) \geqslant \ini (M) +1 = x+1,
\end{equation*}
which implies
\begin{equation*}
\ini ((\Omega M) \rest) \geqslant x.
\end{equation*}
Thus, $Q(y)=0$ for all $y<x$, so $Q$ is generated in position $x$.
\end{proof}

\begin{proposition}  \label{restriction is koszul implies it is koszul}
Let $\iota: \tD \to \tC$ be a genetic functor. Let $M \in \tC \lgmod$ be a lower bounded $\tC$-module which is generated in degree 0 and suppose that $M_0 = M(x)$ for some $x \geqslant 1$. Then $M$ is a Koszul $\tC$-module if $M \rest$ is a Koszul $\tD$-module.
\end{proposition}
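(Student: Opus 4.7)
The plan is to combine Lemma \ref{koszul by position}, which characterizes Koszulity of $M$ by the condition that $\Omega^n M$ is generated in position $x+n$ for every $n \geqslant 0$, with an induction driven by Lemma \ref{crucial lemma} and Lemma \ref{degree shift}. Note first that since $M$ is generated in degree $0$ with $M_0 = M(x)$, the module $M\rest$ is generated in degree $0$ with $(M\rest)_0 = M\rest(x-1) = M(x)$. So the hypothesis that $M\rest$ is Koszul combined with Lemma \ref{koszul by position} gives that $\Omega^n(M\rest)$ is generated in position $x-1+n$ for all $n \geqslant 0$.

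I would proceed by simultaneous induction on $n$ to establish the two statements: (i) $\Omega^n M$ is generated in position $x+n$; (ii) $(\Omega^n M)\rest \cong \Omega^n(M\rest) \oplus Q_n$, where $Q_n$ is a projective $\tD$-module. The base case $n=0$ is trivial (with $Q_0=0$). For the inductive step, assume (i) and (ii) hold at level $n$. By (i), the module $\Omega^n M$ is generated in position $x+n$, so Lemma \ref{crucial lemma} applies and yields
\begin{equation*}
(\Omega^{n+1} M)\rest \cong \Omega\bigl((\Omega^n M)\rest\bigr) \oplus Q,
\end{equation*}
with $Q$ a projective $\tD$-module generated in position $x+n$. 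Plugging in (ii) for level $n$ and using that $\Omega$ of a projective vanishes and distributes over direct sums of minimal projective covers, one obtains $(\Omega^{n+1} M)\rest \cong \Omega^{n+1}(M\rest) \oplus Q$, giving (ii) at level $n+1$ with $Q_{n+1}=Q$.

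For (i) at level $n+1$, observe that by the Koszulity of $M\rest$ the summand $\Omega^{n+1}(M\rest)$ is generated in position $x+n$, while $Q_{n+1}$ is generated in position $x+n$ by construction; hence $(\Omega^{n+1} M)\rest$ is generated in position $x+n$. On the other hand, Lemma \ref{initial position} gives $\ini(\Omega^{n+1} M) \geqslant \ini(M) + n+1 \geqslant x+n+1 \geqslant 2 \geqslant 1$, so Lemma \ref{degree shift} applies and upgrades the generation statement for $(\Omega^{n+1} M)\rest$ to: $\Omega^{n+1} M$ is generated in position $x+n+1$. This closes the induction, and an application of Lemma \ref{koszul by position} to $M$ then concludes that $M$ is Koszul.

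The main technical obstacle is the bookkeeping in step (ii): one needs to know that each $Q_n$ is projective (so that $\Omega Q_n=0$) in order to iterate Lemma \ref{crucial lemma} cleanly, and to track simultaneously the generating positions of the projective summands and of $\Omega^n(M\rest)$ so they match up at position $x+n$ before invoking Lemma \ref{degree shift}. The shift by one between positions in $\tC$ and $\tD$ (via $\iota(y)=y+1$) is what makes these indices fit together, and the assumption $x\geqslant 1$ is exactly what guarantees that Lemma \ref{degree shift} remains applicable at every stage.
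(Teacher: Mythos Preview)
Your proof is correct and follows essentially the same approach as the paper: both arguments use Lemma \ref{koszul by position} to reduce to showing $\Omega^n M$ is generated in position $x+n$, then induct using Lemma \ref{crucial lemma} together with Lemma \ref{degree shift}, the key identity being $(\Omega^n M)\rest \cong \Omega^n(M\rest) \oplus (\text{projective generated in position } x+n-1)$. Your simultaneous induction on (i) and (ii) is a slightly cleaner packaging of what the paper obtains by applying Lemma \ref{crucial lemma} at each level $r<n$ and telescoping, but the underlying argument is the same.
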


\begin{proof}
Suppose that $M \rest$ is a Koszul $\tD$-module. Then it is generated in degree 0 and one has
\begin{equation*}
(M \rest)_0 = M_0 = M(x) = M \rest (x-1).
\end{equation*}
Thus, by Lemma \ref{koszul by position}, for each $n \geqslant 0$, the syzygy $\Omega^n  (M \rest)$ is generated in position $x+n-1$.

To show that $M$ is Koszul, it suffices (by Lemma \ref{koszul by position}) to prove that $\Omega ^n M$ is generated in position $x+n$ for all $n\in\Z_+$. Clearly, this holds for $n=0$. Now let $n>0$. Suppose, for induction, that $\Omega^{r} M$ is generated in position $x+r$ for all $r < n$.

Applying Lemma \ref{crucial lemma} to the $\tC$-module $\Omega^{r}M$ for $r=0,\ldots,n-1$, one has
\begin{equation} \label{Qr}
(\Omega^{r+1} M)\rest \cong \Omega( (\Omega^r M) \rest) \oplus Q^r
\end{equation}
where $Q^r$ is a projective graded $\tC$-module generated in position $x+r$. Then, applying $\Omega^{n-r-1}$ to both sides of (\ref{Qr}) for $r=0,\ldots,n-2$, one has
\begin{equation} \label{omega-n}
\Omega^{n-r-1}( (\Omega^{r+1} M) \rest) \cong \Omega^{n-r} ( (\Omega^r M)\rest ) .
\end{equation}
It follows from (\ref{omega-n}) that
\begin{equation} \label{omega-nnn}
\Omega( (\Omega^{n-1} M)\rest ) \cong \Omega^n( M\rest ) .
\end{equation}
Hence, from (\ref{Qr}) for $r=n-1$, we have
\begin{align*}
(\Omega^n M)\rest & \cong \Omega ( (\Omega^{n-1} M)\rest ) \oplus Q^{n-1} \\
& \cong \Omega^n( M\rest ) \oplus Q^{n-1} \qquad \mbox{ (using (\ref{omega-nnn}))}.
\end{align*}
Since $\Omega^n( M\rest )$ and $Q^{n-1}$ are both generated in position $x+n-1$, it follows that $(\Omega^n M)\rest$ is generated in position $x+n-1$. But $\ini(\Omega^n M)\geqslant 1$, so
by Lemma  \ref{degree shift}, we deduce that $\Omega^n M$ is generated in position $x+n$.
\end{proof}

\begin{proposition} \label{G0 is koszul}
Let $\iota: \tD \to \tC$ be a genetic functor. Then $\tC(0,0)$ is a Koszul $\tC$-module.
\end{proposition}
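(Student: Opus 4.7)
The plan is to deduce this from Proposition \ref{restriction is koszul implies it is koszul} by taking one syzygy step and shifting the grading. Note first that $\tC(0,0)$ admits $\tC(0,-)$ as its projective cover in $\tC\gmod$, so $\Omega\tC(0,0)$ is the submodule of $\tC(0,-)$ consisting of its positive-degree components; since $\tC$ is of type $\Ai$, this equals the submodule of $\tC(0,-)$ supported at positions $\geqslant 1$. Set $N := (\Omega \tC(0,0))\langle -1\rangle$.

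First I would verify that $N$ satisfies the hypotheses of Proposition \ref{restriction is koszul implies it is koszul}. By Proposition \ref{projective-cover}, $\Omega\tC(0,0)$ is generated in degree $1$, so $N$ is generated in degree $0$; its degree-$1$ component equals $\tC(0,1)$, which sits at position $1$, so $N_0 = N(1)$ with $x = 1 \geqslant 1$. Next, since $\iota$ shifts positions up by one, restriction to $\tD$ amounts to discarding position $0$; as $\Omega\tC(0,0)$ agrees with $\tC(0,-)$ at all positions $\geqslant 1$, this yields the identification $(\Omega\tC(0,0))\rest = \tC(0,-)\rest$ of graded $\tD$-modules, and hence $N\rest \cong \tC(0,-)\rest\langle -1 \rangle$. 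By condition (F3), this is a projective graded $\tD$-module, and after the degree shift it is generated in degree $0$; any such projective is trivially a Koszul module, its linear projective resolution being concentrated in cohomological degree $0$.

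Applying Proposition \ref{restriction is koszul implies it is koszul} then gives that $N$ is a Koszul $\tC$-module, so for each $n \geqslant 0$ the syzygy $\Omega^n N = \Omega^{n+1}\tC(0,0)\langle -1\rangle$ is generated in degree $n$. Thus $\Omega^{n+1}\tC(0,0)$ is generated in degree $n+1$ for all $n \geqslant 0$; combined with the trivial fact that $\tC(0,0)$ is generated in degree $0$, every syzygy of $\tC(0,0)$ sits in the correct degree, and $\tC(0,0)$ is Koszul. The main obstacle is that Proposition \ref{restriction is koszul implies it is koszul} does not apply directly to $\tC(0,0)$: since $\iota$ takes values in positions $\geqslant 1$, we have $\tC(0,0)\rest = 0$, and the required $x \geqslant 1$ is unavailable. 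Taking one syzygy step is the minimal maneuver that lifts the module into positions $\geqslant 1$, and condition (F3) is then exactly what makes the restricted object projective and so a fortiori Koszul.
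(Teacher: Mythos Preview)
Your argument is correct and mirrors the paper's exactly: pass to $N=(\Omega\tC(0,0))\langle-1\rangle$, identify $N\rest$ with the projective $\tC(0,-)\rest\langle-1\rangle$ via (F3), and invoke Proposition~\ref{restriction is koszul implies it is koszul}. One minor point: Proposition~\ref{projective-cover} only gives $(\Omega\tC(0,0))_0=0$, not that $\Omega\tC(0,0)$ is generated in degree~$1$; that follows instead from condition (P6) (since $\tC_1\cdot\tC(0,-)_i=\tC(0,-)_{i+1}$), or, as the paper does it, from (F3) together with Lemma~\ref{degree shift}.
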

\begin{proof}
Let $W=\tC(0,0)$ considered as a graded $\tC$-module. The projective cover of $W$ is $\tC(0,-) \to W$. Since $W \rest = 0$, one has
\begin{equation*}
  (\Omega W)\rest \cong \tC(0,-)\rest.
\end{equation*}
Hence, $(\Omega W)\rest$ is a projective graded $\tD$-module generated in position 0. By Lemma \ref{degree shift}, $\Omega W$ is generated in position 1. Note that $(\Omega W)(1) = (\Omega W)_1$. So $(\Omega W)\rest$ is a projective graded $\tD$-module generated in degree 1.

Let $M=(\Omega W) \langle -1 \rangle$. Then $M(1) = M_0$ and so $M$ is generated in degree 0. Moreover, $M\rest$ is a projective graded $\tD$-module generated in degree 0, so $M\rest$ is a Koszul $\tD$-module. It follows by Proposition \ref{restriction is koszul implies it is koszul} that $M$ is a Koszul $\tC$-module, and hence $W$ is a Koszul $\tC$-module.
\end{proof}

We can now prove Theorem \ref{main-theorem-3}, which is part of the following result.

\begin{theorem} \label{self embedding and Koszul}
Let $\iota: \tD \to \tC$ be a genetic functor. Then $\tC$ is Koszul when any one of the following hold:
\begin{enumerate}
\item $\tD$ is Koszul;
\item $\tD=\tC$.
\end{enumerate}
\end{theorem}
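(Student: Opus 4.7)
The plan is to prove by induction on $x \in \Z_+$ that $\tC(x,x)$, viewed as a graded $\tC$-module concentrated in degree $0$, is a Koszul $\tC$-module; this gives the Koszulity of $\tC$ directly from the definition. The base case $x=0$ is precisely Proposition \ref{G0 is koszul}, so all the work lies in the inductive step.

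For the inductive step, fix $x \geqslant 1$ and set $W = \tC(x,x)$. Since $W$ is generated in degree $0$ with $W_0 = W(x)$ and $x \geqslant 1$, Proposition \ref{restriction is koszul implies it is koszul} reduces the task to showing that $W\rest$ is a Koszul $\tD$-module. A direct inspection shows that $W\rest$ is supported only at the object $x-1 \in \Ob(\tD)$, where it takes the value $\tC(x,x)$, and the action of $\tD(x-1,x-1) = \tD(x-1,x-1)_0$ factors through the $k$-algebra homomorphism $\tD(x-1,x-1) \to \tC(x,x)$ induced by $\iota$.

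The key step is then to exploit condition (P4): since $\tD(x-1,x-1)$ is semisimple, the $\tD(x-1,x-1)$-module $\tC(x,x)$ decomposes as a finite direct sum of simple modules, each of which appears as a direct summand of $\tD(x-1,x-1)$ itself (viewed as the left regular representation). Promoting back to graded $\tD$-modules supported at $x-1$, this exhibits $W\rest$ as a direct summand of a finite direct sum of copies of the graded $\tD$-module $\tD(x-1,x-1)$. Because minimal projective resolutions split under direct sums, the class of Koszul modules is closed under direct summands, so it suffices to show that $\tD(x-1,x-1)$ is itself a Koszul $\tD$-module.

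In case (1) this is part of the hypothesis that $\tD$ is Koszul. In case (2) we have $\tD = \tC$, and $\tC(x-1,x-1)$ is Koszul as a $\tC$-module by the inductive hypothesis (applied at $x-1 < x$). In either case Proposition \ref{restriction is koszul implies it is koszul} then yields that $W = \tC(x,x)$ is a Koszul $\tC$-module, completing the induction. The main subtlety I expect is the semisimple decomposition step, which requires keeping track of the fact that (P4) does not assume $\tC(x,x)_0 = k$; beyond this, the argument is bookkeeping on top of the preceding propositions.
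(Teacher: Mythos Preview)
Your argument is correct and follows essentially the same route as the paper's proof: induct on $x$, use Proposition~\ref{G0 is koszul} for the base case, and for $x\geqslant 1$ reduce via Proposition~\ref{restriction is koszul implies it is koszul} to showing that $W\rest$ is Koszul, which holds because it is a direct summand of a finite power of $\tD(x-1,x-1)$. The paper simply asserts this last point, whereas you spell out that it follows from the semisimplicity of $\tD(x-1,x-1)_0$ (condition (P4)) together with the fact that every simple module of a finite-dimensional semisimple algebra occurs in the left regular representation; this is exactly the justification, and your treatment of the two cases is identical to the paper's.
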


\begin{proof}
For each $x\in\Z_+$, let $W^x = \tC(x,x)$ regarded as a graded $\tC$-module, and let $U^x = \tD(x,x)$ regarded as a graded $\tD$-module. We already know, by Proposition \ref{G0 is koszul}, that $W^0$ is Koszul. It remains to show that $W^x$ is Koszul for $x\geqslant 1$.

Suppose $x\geqslant 1$. By Proposition \ref{restriction is koszul implies it is koszul}, it suffices to prove that $W^x \rest$ is Koszul. However, $W^x \rest$ is a direct summand of $(U^{x-1})^{\oplus a_x}$ for some $a_x\geqslant 1$.

In case (1), since $U^{x-1}$ is Koszul, it follows that $W^x\rest$ is Koszul and hence $W^x$ is Koszul.

In case (2), since $U^{x-1}=W^{x-1}$ and $W^0$ is Koszul, it follows by an induction argument that $W^x\rest$ is Koszul and hence $W^x$ is Koszul.
\end{proof}

\section{Combinatorial conditions} \label{combinatorial}

The purpose of this section is to give combinatorial conditions which imply the existence of a genetic functor $\iota: \tC \to \tC$. We shall verify these conditions for several examples.

\emph{In this section, we assume that $\bk$ is a field of characteristic 0.}

\subsection{Combinatorial conditions}  \label{subsection on combinatorial conditions}
A category is called an \emph{EI category} when  all endomorphisms in the category are isomorphisms.\footnote{As far as we are aware, the notion of EI category was defined by tom Dieck \cite{Dieck} and W. L\"{u}ck \cite{Luck}.}

In this subsection, suppose that $\C$ is an EI category satisfying the following conditions:

\begin{itemize}
\item[(E1)]
$\Ob(\C)=\Z_+$ or $\Z_+\setminus\{0\}$;

\item[(E2)]
$\C(x,y)$ is an empty set if $x>y$;

\item[(E3)]
$\C(x,y)$ is a nonempty finite set if $x\leqslant y$;

\item[(E4)]
for $x\leqslant y\leqslant z$, the composition map
\begin{equation*}
\C(y,z) \times \C(x,y) \to \C(x,z)
\end{equation*}
is surjective.
\end{itemize}
The $\bk$-linearization $\tC$ of $\C$ is graded with $\tC(x,y)$ concentrated in degree $y-x$ for $x\leqslant y$. It is clear that $\tC$ is isomorphic to a directed graded $\bk$-linear category of type $\Ai$.

To avoid confusion, we shall use the following notation.
\begin{notation}
Let $\II\in \C(1,1)$ be the identity morphism $1_1$ of $1\in\Ob(\C)$.
\end{notation}

\begin{proposition} \label{factorization}
Suppose $\C$ satisfies the following conditions:
\begin{itemize}
\item[(C1)]
There is a monoidal structure $\odot$ on $\C$ with
\begin{equation*}
x \odot y = x+y \quad \mbox{ for all } x,y\in\Ob(\C) .
\end{equation*}

\item[(C2)]
For all $x,y \in \Ob(\C)$, the map
\begin{equation*}
\C(x,y) \to \C(1+x, 1+y), \quad f \mapsto \II \odot f
\end{equation*}
is injective.

\item[(C3)]
Each morphism $f \in \C(x,1+y)$ has a factorization into a composition of
\begin{equation*}
f_1: x \to  1+z \qquad \mbox{ and } \qquad \II \odot f_2 : 1+z \to 1+y
\end{equation*}
where $z\leqslant x$ and $f_2:z\to y$;  moreover, for each $f$, if $z$ is the minimal integer for which such a factorization exists, then $f_2$ is unique given $f_1$, and $f_1$ is unique up to automorphisms of $z$.

\item[(C4)]
If a morphism $f \in \C(x,1+y)$ does not possess a factorization described in condition (C3) with $z<x$, then for any morphism $f_3 \in \C(y,y')$, the composition $(\II\odot f_3)\circ f\in \C(x,1+y')$ also does not possess a factorization described in (C3) with $z<x$.
\end{itemize}
Let $\iota: \tC \to \tC$ be the $\bk$-linear functor defined by $x\mapsto 1+x$ for any object $x$ and $f\mapsto \II\odot f$ for any morphism $f$ of $\C$. Then $\iota: \tC \to \tC$ is a genetic functor, and $\tC$ is Koszul.
\end{proposition}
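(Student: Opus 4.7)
The strategy is to verify that $\iota$ is a genetic functor in the sense of Definition~\ref{genetic functor}, after which Theorem~\ref{self embedding and Koszul}(2) immediately yields the Koszulity of $\tC$. Condition (F1) is already recorded in the paragraph preceding the proposition: under (E1)--(E4) the $k$-linearization of $\C$ is a directed graded $k$-linear category of type $\Ai$. Condition (F2) is also immediate: (C1) ensures $\iota$ is a well-defined $k$-linear functor with $\iota(x) = 1+x$, and (C2) is precisely the faithfulness of $\iota$ on hom-spaces. The substantive work is (F3): for each $x \in \Z_+$, the restricted $\tC$-module $\tC(x,-)\rest$, whose value at $y$ is $\tC(x, 1+y)$ with action $h \cdot f = (\II \odot h) \circ f$, must be shown to be projective and generated in positions $\leqslant x$.

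The plan for (F3) is to construct an explicit direct-sum decomposition. For each $z \leqslant x$, let
\[
V_z \;:=\; \{f_1 \in \C(x, 1+z) : \text{the minimal factorization value of } f_1 \text{ in (C3) equals } z\},
\]
let $\operatorname{Aut}_\C(z)$ act on $V_z$ on the left by $\sigma \cdot f_1 = (\II \odot \sigma) \circ f_1$, and set $W_z := kV_z \otimes_{k\operatorname{Aut}_\C(z)} \tC(z, -)$. There is a natural $\tC$-module map $\Phi_z \colon W_z \to \tC(x,-)\rest$ sending $f_1 \otimes g \mapsto (\II \odot g) \circ f_1$, and summing gives $\Phi \colon \bigoplus_{z \leqslant x} W_z \to \tC(x,-)\rest$. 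Surjectivity of $\Phi$ is immediate from (C3). The essential dimension bookkeeping is also (C3), via the bijection $(V_z \times \C(z,y))/\operatorname{Aut}_\C(z) \cong \C(x, 1+y)_z$ together with $\C(x, 1+y) = \sqcup_z \C(x, 1+y)_z$, which (together with Burnside counting in characteristic $0$) shows $\dim \bigoplus_z W_z(y) = \dim \tC(x, 1+y)$. Hence $\Phi$ is an isomorphism.

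To promote $\Phi$ to a genuine $\tC$-module isomorphism rather than merely a vector-space one, I would stratify $\tC(x,-)\rest$ by minimal factorization value, setting $G_z\tC(x,-)\rest(y)$ to be the span of morphisms of minimal value $\leqslant z$. Condition (C3) alone yields that each $G_z$ is a $\tC$-submodule (post-composition by $\II \odot h$ never raises the minimal value), so we obtain an ascending filtration $G_0 \subseteq G_1 \subseteq \cdots \subseteq G_x = \tC(x,-)\rest$. Condition (C4) is then the key input: applied at the top level, it asserts that for $f_1 \in V_x$ the composition $(\II \odot g) \circ f_1$ retains minimal value $x$, so $k\C(x,1+-)_x$ is itself a sub-$\tC$-module (not merely a quotient), complementary to $G_{x-1}$, and the (C3) uniqueness identifies it with $W_x$ as $\tC$-modules. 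This gives a splitting $\tC(x,-)\rest = W_x \oplus G_{x-1}$. Iterating the same peeling-off argument on each lower stratum (invoking (C4) for the category restricted to $\{0, 1, \dots, z\}$, which inherits (C1)--(C4) as a convex sub-situation) yields $\tC(x,-)\rest \cong \bigoplus_{z \leqslant x} W_z$.

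Each $W_z$ is projective because, in characteristic $0$, the group algebra $k\operatorname{Aut}_\C(z)$ is semisimple, so $W_z$ is a direct summand of $\tC(z,-)^{\oplus |V_z|}$. Since each $W_z$ is generated in position $z$, the sum is generated in positions $\leqslant x$, proving (F3). The main obstacle is the propagation of (C4) to every level of the filtration: as stated, (C4) most directly controls only the $z = x$ stratum, so showing that the lower strata also split off cleanly requires combining (C4) with the uniqueness clause of (C3) to exclude any unexpected collapse. Once (F3) is established, Theorem~\ref{self embedding and Koszul}(2) applied with $\tD = \tC$ and the genetic endofunctor $\iota$ completes the proof that $\tC$ is Koszul.
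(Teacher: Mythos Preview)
Your approach is essentially the paper's, but you are missing one elementary observation that makes the argument much cleaner and renders your filtration discussion unnecessary. By condition (E2), the set $\C(x,1+z)$ is empty unless $x\leqslant 1+z$; combined with $z\leqslant x$ this forces $z\in\{x-1,x\}$. So your decomposition $\bigoplus_{z\leqslant x}W_z$ has at most two nonzero summands, and the paper records exactly this as the isomorphism $\tC(x,-)\rest\cong \tC(x,-)^{\oplus m_x}\oplus \tC(x-1,-)^{\oplus n_x}$, where $m_x$ and $n_x$ are the numbers of $G_z$-orbits on $V_x$ and $V_{x-1}=G_x$ respectively (your free $kG_z$-module $kV_z$ and the paper's choice of orbit representatives $\beta_r,\gamma_s$ are the same data). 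There is no ``iteration to lower strata'' to worry about, and your remark about restricting the category to $\{0,\ldots,z\}$ is both unnecessary and not quite meaningful.

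Two further points. First, your map $\Phi$ is already a $\tC$-module homomorphism: for $h\in\C(y,y')$ one has $h\cdot\Phi(f_1\otimes g)=(\II\odot h)(\II\odot g)f_1=(\II\odot hg)f_1=\Phi(f_1\otimes hg)$. Hence once you know $\Phi$ is a bijection of vector spaces you are done; the entire third paragraph about ``promoting $\Phi$ to a genuine $\tC$-module isomorphism'' is superfluous. Second, you attribute the bijection $(V_z\times\C(z,y))/G_z\cong\C(x,1+y)_z$ to (C3) alone, but for $z=x$ you need (C4) to know that $(\II\odot g)\circ f_1$ with $f_1\in V_x$ actually lands in the stratum $\C(x,1+y)_x$ rather than dropping to $\C(x,1+y)_{x-1}$. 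That is the genuine role of (C4) here (the paper's terse ``By conditions (C1)--(C4), the morphism $\Theta$ is bijective'' encodes exactly this). No Burnside counting is needed: the $G_z$-action on $V_z$ is free by the uniqueness clause of (C3) applied to $f_1=(\II\odot 1_z)f_1=(\II\odot\sigma)f_1$.
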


\begin{proof}
Condition (F1) clearly holds. Condition (F2) is immediate from condition (C2). We have to verify condition (F3) with $\tD=\tC$. It suffices to show that for any $x\in\Ob(\C)$, there is a decomposition
\begin{equation}  \label{special decomposition}
 \tC(x,-) \rest \cong \tC(x,-)^{\oplus m_x} \oplus \tC(x-1,-)^{\oplus n_x}
\end{equation}
for some $m_x, n_x \geqslant 0$.

First, it is plain that in condition (C3), $z$ must be $x$ or $x-1$. Let $\C(x,1+x)'$ be the subset of $\C(x,1+x)$ consisting of all $f$ such that there is no factorization described in condition (C3) with $z=x-1$ (and $y=x$).

Now let $x,y\in \Ob(\C)$, and let $G_z=\C(z,z)$ for any $z\in\Ob(\C)$. For $z=x$, we choose a set of representatives $\beta_1, \ldots, \beta_m$ for the set of orbits $G_z \backslash \C(x,1+z)'$. For $z=x-1$, we choose a set  of representatives $\gamma_1, \ldots, \gamma_n$ for the set of orbits $G_z \backslash \C(x,1+z)$.

For $1\le r\le m$, and $y \ge x$, we define a map
\begin{equation*}
\C(x,y) \to \C(x,1+y), \quad \alpha \mapsto (\II \odot \alpha) \circ \beta_r,
\end{equation*}
and extend it linearly to a morphism
\begin{equation*}
\Theta_r : \tC(x,-) \to \tC(x,-)\rest .
\end{equation*}

Similarly, for $1 \le s\le n$, and $y \ge x-1$, we define a map
\begin{equation*}
\C(x-1,y) \to \C(x, 1+y), \quad \alpha \mapsto (\II \odot \alpha) \circ \gamma_s
\end{equation*}
and extend it linearly to a morphism
\begin{equation*}
\Theta'_s: \tC(x-1,-) \to \tC(x,-)\rest .
\end{equation*}

Let
\begin{eqnarray*}
 \Theta: \tC(x,-)^{\oplus m} \oplus \tC(x-1,-)^{\oplus n} & \to & \tC(x,-)\rest, \\
(\alpha_1,\ldots, \alpha_m, \alpha'_1, \ldots, \alpha'_n) & \mapsto &
\sum_{r=1}^m \Theta_r (\alpha_r) + \sum_{s=1}^n \Theta'_s (\alpha'_s).
\end{eqnarray*}
By conditions (C1)--(C4), the morphism $\Theta$ is bijective. This proves (\ref{special decomposition}).

By Theorem \ref{self embedding and Koszul}, it follows that $\tC$ is Koszul.
\end{proof}

For the category $\cFI$, the decomposition (\ref{special decomposition}) was proved in \cite[Proposition 2.12]{CEFN}. T. Church, J. Ellenberg, B. Farb., and R. Nagpal used their \cite[Proposition 2.12]{CEFN} to prove that a finitely generated $\cFI$-module over a Noetherian ring is Noetherian; it appears to be crucial in their proof that in the case of $\cFI$, the number $m_x$ in (\ref{special decomposition}) is 1.

\subsection{Examples} We now give examples of categories satisfying conditions (C1)--(C3) in Proposition \ref{factorization}. In all these example, we define the tensor product $\odot$ on objects by $x\odot y = x+y$ for all $x,y$.

\begin{example}[The category $\cFI_\Gamma$] \label{egfigamma}
Recall the category $\cFI_\Gamma$ defined in Example \ref{egFIG}. If $(f_1,c_1)\in \cFI_\Gamma(x_1,y_1)$ and $(f_2,c_2)\in \cFI_\Gamma(x_2,y_2)$, we define $(f_1,c_1)\odot (f_2,c_2)$ to be the morphism $(f,c)\in \cFI_\Gamma(x_1+x_2,y_1+y_2)$ where
\begin{gather*}
f(r) = \left\{  \begin{array}{ll}
f_1(r) & \mbox{ if } r\leqslant x_1,\\
f_2(r-x_1)+y_1 & \mbox{ if } r> x_1.
\end{array}\right.
\end{gather*}
and
\begin{gather*}
c(r) = \left\{  \begin{array}{ll}
c_1(r) & \mbox{ if } r\leqslant x_1,\\
c_2(r-x_1) & \mbox{ if } r> x_1.
\end{array}\right.
\end{gather*}
It is clear that conditions (C1) and (C2) are satisfied. We now check condition (C3). Let $(f,c)\in \cFI_\Gamma(x,1+y)$.

Suppose first that $f(m)=1$ for some $m\in[x]$. Let $z=x-1$ and let $f_1:[x]\to [x]$ be any bijection with $f_1(m)=1$. Define $f_2: [z]\to [y]$ by $f_2(f_1(r)-1) =f(r)$ for all $r\in[x]\setminus \{m\}$. Let $c_1: [x]\to \Gamma$ be a map such that $c_1(m)=c(m)$. Define $c_2: [z]\to \Gamma$ by $c_2(f_1(r)-1)= c(r) c_1(r)^{-1}$ for all $r\in [x]\setminus\{m\}$.

Now suppose that $f(m)\neq 1$ for all $m\in [x]$. Then there is no factorization with $z=x-1$. Let $z=x$, and let $f_1: [x]\to [1+x]$ be any injection whose image is $\{2,\ldots, 1+x\}$. Define $f_2:[z]\to [y]$ by $f_2(f_1(r)-1)=f(r)$ for all $r\in[x]$. Let $c_1:[x]\to \Gamma$ be any map. Define $c_2:[z]\to \Gamma$ by $c_2(f_1(r)-1)= c(r) c_1(r)^{-1}$.

In both cases, one has $(f,c)= (\II \odot (f_2,c_2))(f_1,c_1)$. Moreover, it is clear that $(f_2,c_2)$ is unique given $(f_1,c_1)$, and $(f_1,c_1)$ is unique up to automorphisms of $z$. Condition (C4) is also clear from the above observations.
\end{example}

\begin{example}[The category $\cOI_\Gamma$] \label{egoig}
Let $\Gamma$ be a finite group and define $\cOI_\Gamma$ to be the subcategory of $\cFI_\Gamma$ with $\Ob(\cOI_\Gamma) = \Z_+$ as follows: for any $x, y\in \Z_+$, the set $\cOI_\Gamma$ consists of all $(f,c)\in \cFI_\Gamma$ such that $f$ is increasing.

It is clear that conditions (C1) and (C2) are satisfied. To check condition (C3), note that given $(f,c)\in \cOI_\Gamma(x,y)$, we can choose its factorization in $\cFI_\Gamma$ with $f_1$ a unique increasing map; then, $f_2$ is also an increasing map. Condition (C4) is clear.
\end{example}

\begin{notation}
If $f:[x]\to [y]$, we write $\Delta_f$ for the set $[y]\setminus \mathrm{Im}(f)$.
\end{notation}

\begin{example}[The category $\cFI_{d}$] \label{egfid}
Let $d$ be a positive integer. We define the category $\cFI_{d}$ following \cite[Section 7.1]{SS}. Let $\Ob(\cFI_d)=\Z_+$. For any $x, y \in \Z_+$, let $\cFI_d(x,y)$ be the set of all pairs $(f,\delta)$ where $f:[x]\to [y]$ is an injection, and $\delta: \Delta_f \to [d]$ is an arbitrary map. The composition of $(f_1,\delta_1) \in \cFI_d(x,y)$ and $(f_2,\delta_2)\in \cFI_d(y,z)$ is defined by $(f_2,\delta_2) (f_1, \delta_1) = (f_3, \delta_3)$ where $f_3=f_2f_1$ and
$$ c_3 (m)= \left\{ \begin{array}{ll}
c_1(r) & \mbox{ if } m=f_2(r) \mbox{ for some } r, \\
c_2(m) & \mbox{ else. } \end{array} \right. $$
We define a monoidal structure on $\cFI_d$ similarly to Example \ref{egfigamma}. It is clear that conditions (C1) and (C2) are satisfied. We now check condition (C3). Let $(f,\delta)\in C(x,1+y)$.

Suppose first that $f(m)=1$ for some $m\in[x]$. Let $z=x-1$ and let $f_1:[x]\to [x]$ be any bijection with $f_1(m)=1$. Define $f_2: [z]\to [y]$ by $f_2(f_1(r)-1) =f(r)$ for all $r\in[x]\setminus\{m\}$. Let $\delta_1 :\emptyset \to [d]$ be the unique map. Define $\delta_2 : \Delta_{f_2}\to [d]$ by $\delta_2(m) = \delta(m+1)$ for all $m\in \Delta_{f_2}$.

Now suppose that $f(m)\neq 1$ for all $m\in [x]$. Then there is no factorization with $z=x-1$. Let $z=x$, and let $f_1: [x]\to [1+x]$ be any injection whose image is $\{2,\ldots, 1+x\}$. Define $f_2:[z]\to [y]$ by $f_2(f_1(r)-1)=f(r)$ for all $r\in[x]$. Define $\delta_1 : X_{f_1} \to [d]$ by $\delta_1(1)=\delta(1)$. Define $\delta_2 : \Delta_{f_2} \to [d]$ by $\delta_2(m) = \delta(m+1)$ for all $m\in \Delta_{f_2}$.

In both cases, one has $(f,\delta)= (\II\odot (f_2,\delta_2))(f_1,\delta_1)$. Moreover, it is clear that $(f_2,\delta_2)$ is unique given $(f_1,\delta_1)$, and $(f_1,\delta_1)$ is unique up to automorphisms of $z$. Condition (C4) is also clear from the above observations.
\end{example}

\begin{remark}
The category of $\underline{\cFI_d}$-modules is equivalent to the category of modules over a certain \emph{twisted commutative algebra}, see \cite[Proposition 7.3.4]{SS}.
\end{remark}

\begin{example}[The category $\cOI_d$] \label{egoid}
Let $d$ be a positive integer. We define the subcategory $\cOI_d$ of $\cFI_d$ following \cite[Section 7.1]{SS}. Let $\Ob(\cOI_d) = \Z_+$. For any $x,y\in\Z_+$, let $\cOI_d(x,y)$ be the set of all pairs $(f,\delta)\in \cFI_d$ such that $f$ is increasing.

It is clear that conditions (C1) and (C2) are satisfied. To check condition (C3), note that given $(f,\delta)\in \cOI_d(x,y)$, we can choose its factorization in $\cFI_d$ with $f_1$ a unique increasing map; then, $f_2$ is also an increasing map. Condition (C4) is clear.
\end{example}

\begin{example}[The opposite category of $\cFS_\Gamma$] \label{egfsgamma}
Let $\Gamma$ be a finite group. We define the category $\cFS_\Gamma$ following \cite[Section 10.1]{SS}. Let $\Ob(\cFS_\Gamma)=\Z_+\setminus\{0\}$. For any $x, y\in \Z_+\setminus\{0\}$, let $\cFS_\Gamma(y,x)$ be the set of all pairs $(f, c)$ where $f: [y]\to [x]$ is a surjection, and $c:[y]\to \Gamma$ is an arbitrary map. The composition of $(f_1, c_1)\in \cFS_\Gamma(y, x)$ and $(f_2, c_2)\in \cFS_\Gamma(z,y)$ is defined by $$ (f_1, c_1) (f_2, c_2) = (f_3, c_3) $$ where $$ f_3(r)=f_1(f_2(r)), \quad c_3(r)=c_1(f_2(r))c_2(r), \quad \mbox{ for all } r\in [z] . $$
We define a monoidal structure on $\cFS_\Gamma^{\op}$ similarly to Example \ref{egfigamma}. It is clear that conditions (C1) and (C2) are satisfied. We now check condition (C3). Let $(f,c)\in \cFS_\Gamma(1+y, x)$. Let $m=f(1)$.

Suppose first that $f^{-1}(m)=\{1\}$. Let $z=x-1$ and let $f_1:[x] \to [x]$ be any bijection such that $f_1(1)=m$. Define $f_2: [y]\to [z]$ by $f_2(r) = f_1^{-1}(f(r+1)) -1$ for all $r\in [y]$. Let $c_1: [1+z] \to \Gamma$ be any map such that $c_1(1)=c(1)$. Define $c_2: [y] \to \Gamma$ by $c_2(r) = c_1(f_2(r)+1)^{-1}c(r+1)$ for all $r\in [y]$.

Now suppose that $f^{-1}(m) \neq \{1\}$. Then there is no factorization with $z=x-1$. Let $z=x$. Let $f_1: [1+z] \to [x]$ be any surjection such that $f_1(1)=m$ and $f_1^{-1}(m) \neq \{1\}$. Define $f_2 : [y] \to [z]$ by $f_2(r) = n$ if $f_1(n+1) = f(r+1)$. Again, let $c_1: [1+z] \to \Gamma$ be any map such that $c_1(1)=c(1)$, and define $c_2: [y] \to \Gamma$ by $c_2(r) = c_1(f_2(r)+1)^{-1}c(r+1)$ for all $r\in [y]$.

In both cases, one has $(f,c)=(f_1,c_1) (\II\odot (f_2,c_2))$ in $\cFS_\Gamma$. Moreover, it is clear that $(f_2,c_2)$ is unique given $(f_1,c_1)$, and $(f_1,c_1)$ is unique up to automorphisms of $z$. Condition (C4) is also clear from the above observations.
\end{example}

\begin{example}[The opposite category of $\cOS_\Gamma$] \label{egosg}
Let $\Gamma$ be a finite group.  We define the subcategory $\cOS_\Gamma$ of $\cFS_\Gamma$ following \cite[Section 8.1]{SS}. Let $\Ob(\cOS_\Gamma) = \Z_+\setminus\{0\}$. For any $x,y\in\Z_+\setminus\{0\}$, let $\cOS_\Gamma(y,x)$ be the set of all pairs $(f,c)\in  \cFS_\Gamma(y, x)$ where $f$ is an \emph{ordered surjection}, in the sense that for all $r<s$ in $[x]$ we have $\mathrm{min} \,f^{-1}(r) < \mathrm{min}\, f^{-1}(s)$.

Similarly to above, it is clear that conditions (C1) and (C2) are satisfied, and to check condition (C3), we note that given $(f,c)\in \cOS_\Gamma(y,x)$, we can choose its factorization in $\cFS_\Gamma$ for a unique choice of ordered surjections $f_1$ and $f_2$. Condition (C4) is clear.
\end{example}

\begin{example}[The category $\cVI$] \label{egvi}
Recall the category $\cVI$ defined in Example \ref{egFIq}. If $f_1\in \cVI(x_1,y_1)$ and $f_2\in \cVI(x_2,y_2)$, we define $f_1\odot f_2\in \cVI(x_1+x_2,y_1+y_2)$ by $f_1\odot f_2 = f_1 \oplus f_2 : \Fq^{x_1} \oplus \Fq^{x_2} \to \Fq^{y_1} \oplus \Fq^{y_2}$.  It is clear that conditions (C1) and (C2) are satisfied. We now check condition (C3).

Let $f \in \C(x,1+y)$. We write $f$ as a $(1+y)\times x$-matrix. Let $u$ be the first row of $f$, and $h$ be the $y \times x$-matrix form by the last $y$ rows of $f$. Suppose that we have a factorization $f = (\II \odot f_2) \circ f_1$ for some $z$. Then the first row of $f_1$ must be $u$. Let $p$ be the $z\times x$-matrix form by the last $z$ rows of $f_1$. Then $h = f_2 p$. Since $f_2$ is injective, one has
\begin{equation*}
\mbox{(rank of $h$)} = \mbox{(rank of $p$)} \leqslant z.
\end{equation*}

Now if we first choose $h = f_2 p$ to be any factorization of $h$ into the composition of a surjective linear map $p: \F^x\to \F^z$ and an injective linear map $f_2: \F^z \to \F^y$, then we obtain a corresponding factorization of $f$ with $z$ equal to the rank of $h$. Hence, a factorization of $f$ exists and the minimal $z$ is the rank of $h$, which is $\leqslant x$. Moreover, any factorization of $f$ with minimal $z$ must be obtained in this way from a factorization of $h$ with $p$ surjective and $f_2$ injective. The uniqueness of $f_2$ given $p$ is clear from the surjectivity of $p$, and the uniqueness of $p$ up to linear automorphisms of $\F^z$ follows from the observation that the kernel of $p$ must be equal to the kernel of $h$.

Condition (C4) is clear from the characterization of the minimal $z$ as the rank of $h$.
\end{example}

We remind the reader that in the following corollary, the characteristic of $\bk$ is 0.

\begin{corollary}  \label{examples are koszul}
Let $\Gamma$ be a finite group, and $d$ an integer $\geqslant 1$.
\begin{enumerate}
\item The $\bk$-linearizations of the following categories and their opposites are Koszul:
\begin{equation*}
 \cFI_\Gamma, \quad \cOI_\Gamma, \quad \cFI_d, \quad \cOI_d, \quad \cFS_\Gamma, \quad \cOS_\Gamma, \quad \cVI.
\end{equation*}

\item If $\Gamma$ is abelian, the $\bk$-linearizations of the category $\cFI^\prime_\Gamma$ (of Example \ref{egFIoG}) and its opposite are Koszul.
\end{enumerate}
\end{corollary}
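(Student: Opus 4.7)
The proof is essentially an assembly of results already established in the paper; the real work was done in the individual examples and in the general machinery. Here is the plan.

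For part (1), I would proceed category by category. Examples \ref{egfigamma}--\ref{egvi} verify, in each case, that the category in question (or its opposite, in the cases of $\cFS_\Gamma$ and $\cOS_\Gamma$) satisfies the combinatorial conditions (C1)--(C4) of Proposition \ref{factorization}. By Proposition \ref{factorization}, the self-embedding $\iota(x)=1+x$, $\iota(f)=\II\odot f$ is a genetic functor; hence by Theorem \ref{self embedding and Koszul}(2), the corresponding $k$-linearization is a Koszul category. The Koszulity of the opposite category then follows immediately from Proposition \ref{opposite category}. The only bookkeeping point to mind is that for $\cFS_\Gamma$ and $\cOS_\Gamma$ the examples are stated for the opposite categories, so Proposition \ref{factorization} first gives Koszulity of $\cFS_\Gamma^{\op}$ and $\cOS_\Gamma^{\op}$, and Proposition \ref{opposite category} then delivers Koszulity of $\cFS_\Gamma$ and $\cOS_\Gamma$ themselves.

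For part (2), I would first check that $\cFI_\Gamma^\prime$ is a well-defined subcategory of $\cFI_\Gamma$: closure under composition requires that for bijections $f_1,f_2$, one has $\prod_r c_2(f_1(r))c_1(r)=\big(\prod_r c_2(r)\big)\big(\prod_r c_1(r)\big)$, which is precisely where the abelian hypothesis on $\Gamma$ is used. Next I would observe that the morphism sets $\cFI_\Gamma^\prime(x,y)$ and $\cFI_\Gamma(x,y)$ coincide whenever $x\neq y$, since the defining condition $c(1)\cdots c(x)=1$ only constrains $(f,c)$ when $f$ is a bijection. One then checks the standing hypotheses (P1)--(P8): the only nontrivial point is (P4), which holds because $\cFI_\Gamma^\prime(x,x)$ is a subgroup of $\cFI_\Gamma(x,x)$ and, in characteristic $0$, its group algebra is semisimple. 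Consequently the underline-categories $\underline{\cFI_\Gamma^\prime}$ and $\underline{\cFI_\Gamma}$ are directed graded $k$-linear categories of type $\Ai$ with identical essential subcategories in the sense of Remark \ref{essential subcategory}.

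Applying Theorem \ref{main-theorem-2} to the Koszul category $\underline{\cFI_\Gamma}$ from part (1), the common essential subcategory is Koszul; applying Theorem \ref{main-theorem-2} in the reverse direction to $\underline{\cFI_\Gamma^\prime}$, we conclude that $\underline{\cFI_\Gamma^\prime}$ is Koszul. Koszulity of the opposite category then follows from Proposition \ref{opposite category}. I do not anticipate any serious obstacle: the only thing that could go wrong is the verification that $\cFI_\Gamma^\prime$ really is closed under composition, and that is exactly the place where abelianness of $\Gamma$ is built into the hypothesis.
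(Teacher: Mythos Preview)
Your proposal is correct and follows essentially the same approach as the paper: part (1) is assembled from the examples, Proposition \ref{factorization}, and Proposition \ref{opposite category}, while part (2) uses that $\cFI_\Gamma$ and $\cFI^\prime_\Gamma$ share the same essential subcategory together with Theorem \ref{main-theorem-2} and Proposition \ref{opposite category}. Your additional verifications (closure under composition, (P4) via semisimplicity in characteristic~0) are correct elaborations that the paper leaves implicit.
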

\begin{proof}
(1) This is immediate from the above examples, Proposition \ref{opposite category}, and Proposition \ref{special decomposition}.

(2) For a finite abelian group $\Gamma$, the categories $\cFI^\prime_\Gamma$ and $\cFI_\Gamma$ have the same essential subcategories, so the result follows from (1) and Theorem \ref{main-theorem-2} (see Remark \ref{essential subcategory}). The opposite is Koszul by Proposition \ref{opposite category}.
\end{proof}

\section{Twists of categories over $\cFI$} \label{over FI}

In this section, we study the quadratic dual of $\cFI_\Gamma$, $\cOI_\Gamma$, $\cFI_d$ and $\cOI_d$. We shall also show that the bounded derived category of  finite dimensional graded modules of  $\cFI_\Gamma$, $\cOI_\Gamma$, $\cFI_d$ or $\cOI_d$ over a field of characteristic 0 is self-dual.

\subsection{Determinant of a finite set}
Suppose $\Delta$ is a finite set with $n$ elements, say $\Delta=\{d_1,\ldots, d_n\}$. We denote by $\bk\Delta$ the $\bk$-vector space with basis $\Delta$, and $\det(\Delta)$ the one dimensional $\bk$-vector space $\wedge^n(k\Delta)$. In particular, if $\Delta=\emptyset$, then $\det(\Delta)=k$. If $\sigma\in S_n$ is a permutation of $[n]$, then one has $d_{\sigma(1)}\cdots d_{\sigma(n)} = \mathrm{sgn}(\sigma) d_1\cdots d_n$ in $\det(\Delta)$. If $\Delta$ and $\Theta$ are finite sets, there is a canonical isomorphism
$$ \det(\Delta)\otimes_{\bk} \det(\Theta) \to \det(\Delta\sqcup \Theta), \quad d\otimes e\mapsto de. $$
If $f:\Delta\to \Theta$ is an injection, then we have an isomorphism
$$ \det(\Delta) \to \det(f(\Delta)) : d \mapsto  f(d), $$
where $f(d)=f(d_1)\cdots f(d_n)$ if $d=d_1\cdots d_n$.

\subsection{Twist construction}
Let $\C$ be any category and let $\rho: \C \to \cFI$ be any functor. In particular, if $x, y\in \Ob(\C)$ and $\rho(x) > \rho(y)$, then $\C(x,y)=\emptyset$. The category $\tC$ has a grading where $\tC(x,y)$ is in degree $\rho(y)-\rho(x)$.

Recall that for any map $f:[i]\to [j]$ (where $i,j\in\Z_+$), we write $\Delta_f$ for the set $[j]\setminus \mathrm{Im}(f)$. If $\alpha\in \C(x,y)$ and $\beta\in \C(y,z)$, then one has
$$ \Delta_{\rho(\beta\alpha)} = \rho(\beta)(\Delta_{\rho(\alpha)})\sqcup \Delta_{\rho(\beta)}. $$
We shall define a $\bk$-linear category $\tC^{tw}$, called the \emph{twist} of $(\C,\rho)$, as follows. Let $\Ob(\tC^{tw}) = \Ob(\C)$. For any $x, y\in \Ob(\C)$, let
$$ \tC^{tw}(x,y) = \bigoplus_{\alpha\in \C(x,y)} k\alpha\otimes_{\bk} \det(\Delta_{\rho(\alpha)}), $$
where $\bk\alpha$ is the one dimensional $\bk$-vector space with basis $\alpha$. The composition of morphisms
$$  \tC^{tw}(y,z) \otimes_{bk} \tC^{tw}(x,y) \to \tC^{tw}(x,z) $$
is defined by extending bilinearly the assignment
$$ (\beta\otimes e) \otimes (\alpha\otimes d) \mapsto \beta\alpha \otimes \rho(\beta)(d)e, $$
where $\alpha\in \C(x,y)$, $\beta\in C(y,z)$, $d\in \det(\Delta_{\rho(\alpha)})$, and $e\in \det(\Delta_{\rho(\beta)})$. It is easy to see that the composition of morphisms in $\tC^{tw}$ is associative. Moreover, the $\bk$-linear category $\tC^{tw}$ has a grading where $\tC^{tw}(x,y)$ is in degree $\rho(y)-\rho(x)$.

Observe that for any $x\in \Ob(\C)$, we have an isomorphism
$$\omega_{\C, x}: \tC(x,x) \stackrel{\simeq}{\to} \tC^{tw}(x,x)$$
where $\omega_{\C, x}: \alpha\mapsto \alpha\otimes 1$ for all $\alpha \in \C(x,x)$. Suppose $x,y\in \Ob(\C)$. Let us regard $\tC^{tw}(x,y)$ as a $(\tC(y,y),\tC(x,x))$-bimodule via $\omega_{\C,x}$ and $\omega_{\C,y}$. If $\rho(y)-\rho(x)=1$, we have an isomorphism of $(\tC(y,y),\tC(x,x))$-bimodules
$$\omega_{\C, x,y}: \tC(x,y) \stackrel{\simeq}{\to} \tC^{tw}(x,y)$$
where $\omega_{\C,x,y}: \alpha\mapsto \alpha\otimes d_{\alpha}$ for all $\alpha\in \C(x,y)$, and $d_\alpha$ denotes the unique element of $\Delta_{\rho(\alpha)}$.

Suppose we have a category $\D$ and a functor $\pi: \D \to \C$. Let $\psi: \D \to \cFI$ be the composition $\rho\circ\pi$, and $\tD^{tw}$ the twist of $(\D,\psi)$. We define a morphism $\pi^{tw}: \tD^{tw} \to \tC^{tw}$ by extending linearly the assignment
$$ \alpha\otimes d \mapsto \pi(\alpha)\otimes d $$
for any $\alpha\in \Mor(\D)$ and $d\in \det(\Delta_{\psi(\alpha)})$.

\subsection{The functors $\tau$ and $\mu$}
Let $\C$ be any category and let $\rho: \C \to \cFI$ be any functor. We shall define a functor
$$ \tau : \tC\Module \to \tC^{tw}\Module . $$
If $M$ is a $\tC$-module, let
$$ \tau(M)(x) = M(x) \otimes_{\bk} \det([\rho(x)]), \qquad \mbox{ for } x\in \Ob(\C). $$
The $\tC^{tw}$-module structure on $\tau(M)$ is defined by extending bilinearly to
$$ \tC^{tw}(x,y) \otimes_{\bk} \tau(M)(x) \to \tau(M)(y) $$
the assignment
$$ (\alpha\otimes d) \otimes  (v\otimes t) \mapsto \alpha(v) \otimes \rho(\alpha)(t)d, $$
where $\alpha\in \C(x,y)$, $v\in M(x)$, $d\in \det(\Delta_{\rho(\alpha)})$, and $t\in \det([\rho(x)])$. If $f: M\to N$ is a morphism of $\tC$-modules, we define a morphism
$$ \tau(f) : \tau(M)  \to \tau(N) $$
of $\tC^{tw}$-modules by
$$\tau(f)(v\otimes t) = f(v) \otimes t $$
whenever $v\in M(x)$ and $t \in \det([\rho(x)])$, for any $x\in \Ob(\C)$.

If $M$ is a graded $\tC$-module, then $\tau(M)$ is also graded with
$$ \tau(M)(x)_i = M(x)_i \otimes_{\bk} \det([\rho(x)]) \quad \mbox{ for all } x\in \Ob(\C),\, i\in \Z. $$

Similarly, we define a functor
$$ \mu : \tC^{tw}\Module \to \tC\Module . $$
If $N$ is a $\tC^{tw}$-module, let
$$ \mu(N)(x) = N(x) \otimes_{\bk} \det([\rho(x)]) ,\qquad  \mbox{ for } x\in \Ob(\C). $$
The $\tC$-module structure on $\mu(N)$ is defined by extending bilinearly to
$$ \tC(x,y) \otimes_{\bk} \mu(N)(x) \to \mu(N)(y) $$
the assignment
$$ \alpha \otimes  (v\otimes t) \mapsto (\alpha\otimes d)(v) \otimes \rho(\alpha)(t)d, $$
where $\alpha\in C(x,y)$, $v\in N(x)$, $t\in \det([\rho(x)])$, and $d=d_1\cdots d_n \in \det(\Delta_{\rho(\alpha)})$ if $\Delta_{\rho(\alpha)}=\{d_1,\ldots, d_n\}$; this is independent of the choice of ordering of $d_1,\ldots, d_n$. If $f: M\to N$ is a morphism of $\tC^{tw}$-modules, we define a morphism
$$ \mu(f) : \mu(M) \to \mu(N) $$
of $\tC$-modules by
$$ \mu(f)(v\otimes t) = f(v) \otimes t $$
whenever $v\in M(x)$ and $t \in \det([\rho(x)])$, for any $x\in \Ob(\C)$.

If $N$ is a graded $\tC^{tw}$-module, then $\mu(N)$ is also graded with
$$ \mu(N)(x)_i = N(x)_i \otimes_{\bk} \det([\rho(x)])  \quad \mbox{ for all } x\in \Ob(\C),\, i\in \Z. $$

\begin{proposition} \label{tau and mu}
The functor $\tau: \tC\Module\to \tC^{tw}\Module$ is an equivalence of categories with quasi-inverse functor $\mu$.
\end{proposition}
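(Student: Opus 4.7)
The plan is to construct natural isomorphisms $\eta \colon \mathrm{id}_{\tC\Module} \to \mu\tau$ and $\eta' \colon \mathrm{id}_{\tC^{tw}\Module} \to \tau\mu$. For each $x \in \Ob(\C)$, let $t_x$ denote the canonical basis element $1 \wedge 2 \wedge \cdots \wedge \rho(x)$ of the one-dimensional space $\det([\rho(x)])$. Since $\mu\tau(M)(x) = M(x) \otimes_k \det([\rho(x)]) \otimes_k \det([\rho(x)])$, I would define $\eta_{M,x} \colon M(x) \to \mu\tau(M)(x)$ by $v \mapsto v \otimes t_x \otimes t_x$, and define $\eta'$ by the analogous formula. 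Each $\eta_{M,x}$ is tensoring with a nonzero element of a one-dimensional $k$-space, hence a $k$-linear isomorphism; naturality in $M$ is immediate because both $\tau$ and $\mu$ act as the identity on the ``underlying'' vector space map, so $\mu\tau(f)(v \otimes t_x \otimes t_x) = f(v) \otimes t_x \otimes t_x$ for any $\tC$-module morphism $f$.

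The nontrivial step is to verify that $\eta_M$ intertwines the $\tC$-actions. Unwinding the definitions of $\tau$ and $\mu$, for $\alpha \in \C(x,y)$ and $v \in M(x)$, one computes
\begin{equation*}
\alpha \cdot \eta_{M,x}(v) \;=\; \alpha(v) \otimes \bigl(\rho(\alpha)(t_x)\, d\bigr) \otimes \bigl(\rho(\alpha)(t_x)\, d\bigr),
\end{equation*}
where $d$ is the wedge of the elements of $\Delta_{\rho(\alpha)}$ in any fixed order. Both $\rho(\alpha)(t_x)\, d$ and $t_y$ are basis vectors of the one-dimensional space $\det([\rho(y)])$, so they differ by a sign $\varepsilon \in \{\pm 1\}$. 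The two appearances of $\varepsilon$ contribute $\varepsilon^2 = 1$, giving $\alpha \cdot \eta_{M,x}(v) = \alpha(v) \otimes t_y \otimes t_y = \eta_{M,y}(\alpha(v))$, as desired. The verification that $\eta'$ intertwines the $\tC^{tw}$-actions is essentially the same calculation, now with the action of $\alpha \otimes d \in \tC^{tw}(x,y)$ in place of the action of $\alpha \in \tC(x,y)$.

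The only (minor) obstacle is the sign bookkeeping around the choice of $d \in \det(\Delta_{\rho(\alpha)})$: this element is only canonical up to sign, but the $\mu$-structure is nonetheless well-defined because $d$ appears twice in its formula, a reordering of $d_1, \ldots, d_n$ flipping both factors simultaneously. Precisely the same double-appearance in the displayed formula above makes the naturality square commute. Since all maps involved preserve the internal grading (as already observed in the construction of $\tau$ and $\mu$), the same argument upgrades to equivalences $\tC\gMod \simeq \tC^{tw}\gMod$ and restricts to the locally finite and lower bounded subcategories.
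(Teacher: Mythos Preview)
Your proof is correct and follows essentially the same approach as the paper: the paper writes down the same isomorphisms $v \mapsto (v\otimes t)\otimes t$ and $w \mapsto (w\otimes t)\otimes t$ (allowing any ordering of $[\rho(x)]$ rather than fixing the canonical one), and leaves implicit precisely the sign-cancellation verification of $\tC$-equivariance that you spell out. Your added detail that $\rho(\alpha)(t_x)\,d = \varepsilon\, t_y$ with $\varepsilon^2=1$ is exactly the reason the paper can say ``for any ordering'' without further comment.
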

\begin{proof}
Let $M\in\tC\Module$ and $N\in\tC^{tw}\Module$. We have the isomorphisms
\begin{gather*}
M \stackrel{\sim}{\to} \mu(\tau(M)), \quad v \mapsto (v\otimes t) \otimes t,\\
N \stackrel{\sim}{\to} \tau(\mu(N)), \quad w \mapsto (w\otimes t) \otimes t,
\end{gather*}
where $v\in M(x)$, $w\in N(x)$, and $t=t_1\cdots t_{\rho(x)} \in \det([\rho(x)])$, for any $x\in \Ob(\C)$, and any ordering $t_1, \ldots, t_{\rho(x)}$ of the elements of $[\rho(x)]$.
\end{proof}
Observe that $\tau$ and $\mu$ also give equivalences between the corresponding categories of graded modules, locally finite modules, lower bounded modules, and graded finite dimensional modules.

\subsection{Free EI categories}
Before we prove Theorem \ref{main-theorem-4}, let us recall some basic facts on free EI categories following \cite{Li1}.

\begin{definition}
(\cite[Definition 2.1]{Li1} An \emph{EI quiver} $Q$ is a datum $(Q_0, Q_1, s, t, f, g)$ where: $(Q_0, Q_1, s, t)$ is an acyclic quiver with vertex set $Q_0$, arrow set $Q_1$, source map $s$, and target map $t$. The map $f$ assigns a group $f(x)$ to each vertex $x \in Q_0$; the map $g$ assigns an $(f(t(\alpha)), f(s(\alpha)))$-biset to each arrow $\alpha \in Q_1$.
\end{definition}

For each EI quiver $Q = (Q_0, Q_1, s, t, f, g)$, we construct an EI category $\cF_Q$ as follows. Let $\Ob (\cF_Q) = Q_0$. For any object $x \in Q_0$, let $\cF_Q (x,x) = f(x)$. For any path
\begin{equation*}
\gamma : \;  \xymatrix{x_0 \ar[r]^-{\alpha_1} & x_1 \ar[r]^-{\alpha_2} & \ldots \ar[r]^-{\alpha_n} & x_n} ,
\end{equation*}
where $n\geqslant 1$ and $\alpha_1, \ldots, \alpha_n\in Q_1$, let
\begin{equation*}
H_{\gamma}=g(\alpha_n) \times_{f(x_{n-1})} g(\alpha_{n-1}) \times_{f(x_{n-2})} \cdots \times_{f(x_1)} g(\alpha_1).
\end{equation*}
For any objects $x, y\in Q_0$ with $x\neq y$, let $\cF_Q (x,y) = \bigsqcup _{\gamma} H_{\gamma}$ where the disjoint union is taken over all paths $\gamma$ from $x$ to $y$. The composition of morphisms in $\cF_Q$ is defined in the obvious way.

\begin{definition}
An EI category $\C$ is a \emph{free EI category} if it is isomorphic to $\cF_Q$ for some EI quiver $Q$.
\end{definition}

Let $Q=(Q_0,Q_1,s,t,f,g)$ be an EI quiver.

\begin{definition}
We define the \emph{opposite} EI quiver $Q^{\op}$ of $Q$ by
\begin{equation*}
 Q^{\op}  =(Q^{\op}_0, Q^{\op}_1, s^{\op}, t^{\op}, f^{\op}, g^{\op})
\end{equation*}
where
\begin{equation*}
 Q^{\op}_0 = Q_0, \quad Q^{\op}_1 = Q_1, \quad s^{\op} = t, \quad t^{\op} = s, \quad g^{\op} = g,
\end{equation*}
and for any $x\in Q_0$,
\begin{equation*}
f^{\op}(x) = f(x)^{\op} .
\end{equation*}

\end{definition}
It is plain that $(\cF_Q)^{\op} = \cF_{Q^{\op}}$.

\subsection{Twist and quadratic duality}
\emph{Suppose that $\bk$ is a field of characteristic 0 and $\C$ is an EI category satisfying conditions (E1)--(E4) of Subsection \ref{subsection on combinatorial conditions}.}

For any $x\in\Z_+$, let us denote by $G_x$ the group $\C(x,x)$. Let $Q$ be the EI quiver
\begin{equation*}
0 \rightarrow 1 \rightarrow 2 \rightarrow 3 \rightarrow \cdots
\end{equation*}
which assigns to vertex $x$ the group $G_x$ and to arrow $x\to x+1$ the $(G_{x+1},G_x)$-biset $\C(x,x+1)$. Let $\widetilde{\C}$ be the free EI category $\cF_Q$, and denote by $\pi: \widetilde{\tC} \to \tC$ the canonical functor.

We omit the proof of the following lemma from \cite{Li2}.

\begin{lemma} \cite[Lemma 6.1]{Li2}  \label{kernel of free ei cover}
Let $x,y\in \Z_+$. The kernel of $\pi: \widetilde\tC(x,y)\to \tC(x,y)$ is spanned by the set of all elements of the form $\alpha-\beta$ where $\alpha, \beta \in \widetilde\C(x,y)$ are morphisms such that $\pi(\alpha)=\pi(\beta)$.
\end{lemma}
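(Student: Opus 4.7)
The plan is a short linear-algebra argument; the only input from the structure of $\C$ is (via (E4)) the surjectivity of $\pi$ on morphism sets, and even that is only needed to make the statement substantive rather than to drive the proof itself.

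Let $K \subseteq \widetilde\tC(x,y)$ denote the subspace spanned by all elements $\alpha - \beta$ with $\alpha, \beta \in \widetilde\C(x,y)$ and $\pi(\alpha) = \pi(\beta)$. The inclusion $K \subseteq \ker\pi$ is immediate from $k$-linearity, so only the reverse inclusion requires work. I would take $\xi \in \ker\pi$ and write $\xi = \sum_i c_i \alpha_i$ where the $\alpha_i$ are pairwise distinct morphisms in $\widetilde\C(x,y)$ and $c_i \in k$.

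The key move is to group by fibers of $\pi$. For each $\gamma \in \C(x,y)$ appearing as some $\pi(\alpha_i)$, fix a representative $\alpha_\gamma \in \{\alpha_i\} \cap \pi^{-1}(\gamma)$. Then
$$\xi \;=\; \sum_\gamma \sum_{\pi(\alpha_i)=\gamma} c_i\,(\alpha_i - \alpha_\gamma) \;+\; \sum_\gamma d_\gamma\, \alpha_\gamma, \qquad d_\gamma := \!\!\sum_{\pi(\alpha_i)=\gamma}\! c_i,$$
and the first double sum lies in $K$ by construction. Applying $\pi$ annihilates that first sum and yields $\sum_\gamma d_\gamma\, \gamma = 0$ in $\tC(x,y) = k\C(x,y)$; since the distinct morphisms $\gamma \in \C(x,y)$ form a $k$-basis of $\tC(x,y)$, each $d_\gamma$ must vanish, so $\xi$ equals the first sum and therefore lies in $K$.

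There is no substantive obstacle here: the argument is just the standard identification of the kernel of the $k$-linearization of a map of sets with the span of the differences of elements sharing a common image. In particular, the lemma makes no use of any special feature of the free EI category beyond the fact that $\widetilde\C(x,y)$ is a $k$-basis of $\widetilde\tC(x,y)$ and that $\C(x,y)$ is a $k$-basis of $\tC(x,y)$.
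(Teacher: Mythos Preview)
Your argument is correct: this is precisely the standard computation of the kernel of the $k$-linearization of a set map, and nothing beyond the fact that $\widetilde\C(x,y)$ and $\C(x,y)$ are bases of $\widetilde\tC(x,y)$ and $\tC(x,y)$ respectively is used. The paper itself omits the proof entirely, citing \cite[Lemma 6.1]{Li2}, so there is no approach to compare against; your write-up is exactly the kind of elementary verification one would expect here.
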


Recall from Subsection \ref{subsection on quadratic categories} that $\hat\tC$ denotes the free cover of $\tC$.

\begin{lemma}  \label{free cover vs free ei cover}
There are natural isomorphisms $\hat\tC \cong \widetilde\tC$ and $\check{\tC} \cong (\widetilde\tC)^{\op}$.
\end{lemma}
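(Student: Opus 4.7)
The plan is to identify both $\hat{\tC}$ and $\check{\tC}$ with explicit $k$-linear incarnations of the free EI construction, exploiting the linearity of the underlying quiver $Q$. Since $Q$ is $0 \to 1 \to 2 \to \cdots$, there is a unique path from $x$ to $y$ whenever $x \leqslant y$, and the definition of $\cF_Q$ specializes to
\[
\widetilde{\C}(x,y) \;=\; \C(y-1,y) \times_{G_{y-1}} \cdots \times_{G_{x+1}} \C(x,x+1),
\]
whose $k$-linearization is $\widetilde{\tC}(x,y)$. On the tensor-algebra side, since $\tC(z,z)_0 = kG_z$ and $\tC_1 = \bigoplus_z \tC(z,z+1)$ is supported on adjacent pairs, the only chain in $\tC_1^{\otimes_{\tC_0}(y-x)}$ contributing to the $(x,y)$ component is $x, x+1, \ldots, y$, so
\[
\hat{\tC}(x,y) \;=\; \tC(y-1,y) \otimes_{kG_{y-1}} \cdots \otimes_{kG_{x+1}} \tC(x,x+1) .
\]
The elementary identity $k(A \times_G B) \cong kA \otimes_{kG} kB$ for bisets, applied along the chain, furnishes a natural isomorphism $\widetilde{\tC}(x,y) \cong \hat{\tC}(x,y)$ which respects composition (on both sides it is concatenation contracted over intermediate group algebras), proving $\hat{\tC} \cong \widetilde{\tC}$.

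The second isomorphism $\check{\tC} \cong \widetilde{\tC}^{\op}$ proceeds in parallel, with $\mathrm{D}\tC(a,a+1)$ in place of $\tC(a,a+1)$. In characteristic $0$ each $kG_a$ is semisimple, so the remark following the definition of the quadratic dual gives $\mathrm{D}\tC(a,a+1) \cong \tC(a,a+1)^*$ as $(kG_a, kG_{a+1})$-bimodules. The same linear-quiver computation yields, for $y \leqslant x$,
\[
\check{\tC}(x,y) \;\cong\; \mathrm{D}\tC(y,y+1) \otimes_{kG_{y+1}} \cdots \otimes_{kG_{x-1}} \mathrm{D}\tC(x-1,x) .
\]
Iterated application of the standard tensor-hom identity $N^* \otimes_{kG} M^* \cong (M \otimes_{kG} N)^*$, valid for finite-dimensional bimodules over a semisimple group algebra and which reverses the order of the factors, rewrites this as $\hat{\tC}(y,x)^* \cong \widetilde{\tC}(y,x)^*$, using the first part. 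Since $\widetilde{\tC}(y,x) = k\widetilde{\C}(y,x)$ has the canonical basis $\widetilde{\C}(y,x)$, the dual-basis identification combined with the anti-isomorphism $kG \to (kG)^{\op}$, $g \mapsto g^{-1}$, matches the $(kG_y, kG_x)$-bimodule dual with the $(kG_y^{\op}, kG_x^{\op})$-bimodule structure underlying $\widetilde{\tC}^{\op}(x,y)$; assembling the identifications gives the desired isomorphism.

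The main obstacle is purely notational: carefully tracking left and right actions through the successive dualizations and through the passage to the opposite category, and verifying compatibility with composition on all sides. Nothing deeper is required because $\check{\tC}$ is a free tensor construction with no relations to quotient by (as opposed to $\tC^!$), so the comparison stays at the level of tensor and fibered products and never engages the quadratic-duality ideal $I$ appearing in Section \ref{subsection on quadratic categories}.
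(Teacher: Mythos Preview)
Your proposal is correct and follows essentially the same idea as the paper, though you spell out in detail what the paper compresses into two lines. The paper's proof of the first isomorphism is literally ``immediate from constructions,'' which is exactly your computation with $k(A \times_G B) \cong kA \otimes_{kG} kB$ iterated along the chain $x, x+1, \ldots, y$. For the second isomorphism, the paper works at the degree-$1$ level: it observes that $kG_x \cong (kG_x)^{\op}$ via $g \mapsto g^{-1}$ and that $\mathrm{D}\tC(x,y) \cong \tC(x,y)^* \cong \tC(x,y)$ (the latter via the canonical basis $\C(x,y)$), whence $\mathrm{D}\tC_1 \cong \tC_1$ with the bimodule sides swapped, so the tensor algebra $\check{\tC} = \tC_0[\mathrm{D}\tC_1]$ is identified with $\hat{\tC}^{\op} \cong \widetilde{\tC}^{\op}$ in one stroke. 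You instead compute each $\check{\tC}(x,y)$ as an iterated tensor product of duals and then collapse it to $\widetilde{\tC}(y,x)^*$ before invoking the dual-basis identification. This is the same argument, just carried out object-by-object rather than at the level of generating bimodules; the paper's route is shorter because identifying the degree-$1$ pieces automatically identifies the free tensor categories they generate, with compatibility of composition coming for free.
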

\begin{proof}
The first isomorphism is immediate from constructions. The second isomorphism follows from the observations that $\tC(x,x)$ is naturally isomorphic to $\tC(x,x)^{\op}$, and one has natural isomorphisms
\begin{equation*}
   \Hom_{\tC(y,y)} (\tC(x,y), \tC(y,y) )\; \cong\; \Hom_{\bk} (\tC(x,y), \bk) \; \cong\;  \tC(x,y) .
 \end{equation*}
\end{proof}

Now suppose we have a functor $\rho: \C \to \cFI$ such that $\rho(x)=x$ for all $x\in\Z_+$.

It follows from Lemma \ref{kernel of free ei cover} that the kernel of $\pi^{tw}: \widetilde\tC^{tw} \to \tC^{tw}$ is spanned by $(\alpha-\beta)\otimes d$ for all $\alpha, \beta\in \hat\C$ such that $\pi(\alpha)=\pi(\beta)$, and where $d=d_1\cdots d_n \in \det(\Delta_{\rho(\pi(\alpha))})$ for any ordering $d_1,\ldots, d_n$ of the elements of $\Delta_{\rho(\pi(\alpha))}$. Moreover, if the kernel of $\pi: \widetilde\tC \to \tC$ is generated by its degree 2 elements, then the kernel of $\pi^{tw}: \widetilde\tC^{tw} \to \tC^{tw}$ is generated by its degree 2 elements.

There is a unique morphism
\begin{equation*}
\omega: \widetilde\tC \longrightarrow \widetilde\tC^{tw}
\end{equation*}
such that $\omega$ is the identity map on the set of objects, its restriction to $\widetilde\tC(x,x)$ is $\omega_{\widetilde\C,x}$ for all $x\in \Ob(\C)$, and its restriction to $\widetilde\tC(x,y)$ is $\omega_{\widetilde\tC, x,y}$ for all $x,y\in \Ob(\C)$ with $\rho(y)-\rho(x)=1$. It is clear that $\omega$ is bijective, and hence an isomorphism. Moreover, the composition $\pi^{tw}\circ\omega: \widetilde\tC \to \tC^{tw}$ is the free cover of $\tC^{tw}$. Thus, if $\tC$ is quadratic, then $\tC^{tw}$ is quadratic.

Now if $\C=\cFI_\Gamma$ or $\cOI_\Gamma$, we let $\rho:\C\to \cFI$ be the functor sending $(f,c)\in \C$ to $f\in\cFI$. If $\C=\cFI_d$ or $\cOI_d$, we let $\rho:\C\to \cFI$ be the functor sending $(f,\delta)$ to $f\in \cFI$.

\begin{theorem} \label{dual of fi}
Let $\C$ be $\cFI_\Gamma$, $\cOI_\Gamma$, $\cFI_d$, or $\cOI_d$. Then $\tC^!$ is isomorphic to $(\tC^{tw})^{\op}$.
\end{theorem}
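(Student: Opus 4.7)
The plan is to realize both $\tC^!$ and $(\tC^{tw})^{\op}$ as quotients of a common free category, and then check that the two quadratic ideals agree. By Lemma \ref{free cover vs free ei cover} one has $\check{\tC} \cong (\widetilde{\tC})^{\op}$, and composing with the opposite of the isomorphism $\omega: \widetilde{\tC} \to \widetilde{\tC}^{tw}$ (constructed in the paragraphs preceding the theorem) yields a canonical isomorphism
\[
 \check{\tC} \;\cong\; (\widetilde{\tC})^{\op} \;\cong\; (\widetilde{\tC}^{tw})^{\op} ,
\]
under which $\tC^! = \check{\tC}/I$ and $(\tC^{tw})^{\op} = (\widetilde{\tC}^{tw})^{\op}/(\ker \pi^{tw})^{\op}$ become quotients of the same category. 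By Corollary \ref{examples are koszul} the category $\tC$ is Koszul, hence quadratic by Proposition \ref{Koszul categories are quadratic}; as observed in the paragraph preceding the theorem, $\tC^{tw}$ is then also quadratic. Thus both defining ideals are generated in degree $2$, and it suffices to verify their equality there.

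In degree $2$, the ideal for $\tC^!$ inside $\DD\tC_1 \otimes_{\tC_0} \DD\tC_1$ is the image of $\DD\tC_2$, which by the semisimple duality $\Hom_A(M,A) \cong \Hom_k(M,k)$ (Remark at the end of Section \ref{subsection on quadratic categories}) is identified with the annihilator of the relation space $R_2 := \ker(\tC_1 \otimes_{\tC_0} \tC_1 \to \tC_2)$. The ideal for $(\tC^{tw})^{\op}$ in degree $2$ is the kernel of $\pi^{tw}$ restricted to degree $2$, spanned by elements $(\tilde\alpha - \tilde\beta) \otimes d$ with $\pi(\tilde\alpha) = \pi(\tilde\beta)$ and $d \in \det(\Delta_{\rho(\pi(\tilde\alpha))})$. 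I would transport one ideal to the other using the key sign identity: in $\det(\Delta_{\rho(\beta\alpha)}) = \det\bigl(\rho(\beta)(\Delta_{\rho(\alpha)}) \sqcup \Delta_{\rho(\beta)}\bigr)$, swapping the two one-element factors $d_\alpha$ and $d_\beta$ produces a factor of $-1$. This sign is precisely the anti-symmetry discrepancy introduced by the $\DD$-dual of the composition map, so the two quadratic relations match.

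The main obstacle is the explicit bookkeeping in each of the four cases. For $\cFI_\Gamma$, the generators of $R_2$ at the pair $(x, x+2)$ take the form $(f_{m_2}, c_2)\otimes(f_{m_1}, c_1) - (f_{m_1}, c_1'')\otimes(f_{m_2-1}, c_2'')$ for $m_1 < m_2$, with $c_1'', c_2''$ determined from $c_1, c_2$ by the group multiplication in $\Gamma$. The semisimple duality identifies $\DD\tC(x,y)_1$ with $\tC(x,y)_1^*$ as bimodules (with the $\Gamma$-actions twisted by $g \mapsto g^{-1}$), and a direct computation shows that the dualized generator equals the corresponding element of $\ker\pi^{tw}_2$ once the $\det$-sign is absorbed into the twist. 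The $\cOI_\Gamma$ case is the formal restriction of this to increasing $f$, and the cases $\cFI_d, \cOI_d$ are completely analogous with the labels $\delta:\Delta_f \to [d]$ playing the role of $c$ and dualizing to labels on the complementary $\Delta$ on the other side. Assembling these four verifications gives the desired isomorphism $\tC^! \cong (\tC^{tw})^{\op}$.
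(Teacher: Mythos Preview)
Your proposal is correct and follows essentially the same approach as the paper: both arguments identify $\check{\tC}$ with $(\widetilde{\tC})^{\op}$ via Lemma \ref{free cover vs free ei cover}, invoke Koszulity to reduce to a degree-$2$ comparison, and then match the image of the dualized composition $\eta^*$ with the degree-$2$ kernel of $\pi^{tw}$ using the sign coming from swapping the two one-element factors in $\det(\Delta_{\rho(f)})$. The paper carries out this degree-$2$ check concretely for $\cFI_\Gamma$ by choosing representatives $f_r$ for the cosets in $\tC(x,x+1)$ and computing $\eta^*(f) = h_r\otimes f_r + h_s\otimes f_s$ explicitly, then noting $\omega(f_r\otimes h_r + f_s\otimes h_s) = (f_r\otimes h_r - f_s\otimes h_s)\otimes(s\wedge r)$; your phrasing via the annihilator of $R_2$ and the sign identity is the same computation in slightly more invariant language.
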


\begin{proof}
Let $\C$ be $\cFI_\Gamma$. Let $x\in \Z_+\setminus\{0\}$. We have the composition map
$$ \eta: \tC(x,x+1) \otimes_{\tC(x,x)} \tC(x-1,x) \to \tC(x-1,x+1) $$
and its pullback
$$ \eta^* : \tC(x-1,x+1)^* \to \tC(x-1,x)^* \otimes_{\tC(x,x)} \tC(x,x+1)^*. $$
Equivalently, we have
$$ \eta^* : \tC(x-1,x+1) \to \tC(x-1,x) \otimes_{\tC(x,x)^{\op}} \tC(x,x+1); $$
see Lemma \ref{free cover vs free ei cover}.

For each $r\in [x+1]$, we choose any $f_r\in \C(x,x+1)$ such that the image of $\rho(f_r):[x]\to [x+1]$ is $[x+1]\setminus\{r\}$. We have an isomorphism of right $\tC(x,x)$-modules
$$ \tC(x,x)^{\oplus (x+1)} \stackrel{\simeq}{\to} \tC(x,x+1),\quad (g_1,\ldots,g_{x+1}) \mapsto f_1g_1+\cdots+f_{x+1}g_{x+1}. $$
Therefore, we have a linear bijection
\begin{gather*}
\tC(x-1,x)^{\oplus (x+1)} \stackrel{\simeq}{\to} \tC(x,x+1) \otimes_{\tC(x,x)} \tC(x-1,x),\\
(h_1,\ldots,h_{x+1}) \mapsto f_1\otimes h_1 + \cdots + f_{x+1} \otimes h_{x+1}.
\end{gather*}

Suppose $f\in \C(x-1,x+1)$. The set $\Delta_{\rho(f)}$ has exactly two elements. If $r\notin \Delta_{\rho(f)}$, then $f\neq f_r\circ h$ for all $h\in \C(x-1,x)$. If $r\in \Delta_{\rho(f)}$, then there exists a unique $h_r\in \C(x-1,x)$ such that $f=f_r\circ h_r$. Let us write $\Delta_{\rho(f)} = \{r, s\}$. Then
$$ \eta^*(f) = h_r \otimes f_r + h_s \otimes f_s. $$
On the other hand, one has
\begin{multline*}
\omega(f_r\otimes h_r + f_s \otimes h_s) = (f_r\otimes h_r)\otimes (s\wedge r) + (f_s\otimes h_s)\otimes (r\wedge s) \\
= ( f_r\otimes h_r  - f_s\otimes h_s )\otimes (s\wedge r).
\end{multline*}
Hence, the image of $\eta^*$ is precisely the kernel of
$$ (\pi^{tw} \circ \omega)^{\op} : \hat\tC^{\op}(x-1,x+1) \to (\tC^{tw})^{\op}(x-1,x+1). $$
By Proposition \ref{Koszul categories are quadratic} and Corollary \ref{examples are koszul}, $\tC$ is quadratic, and hence $(\tC^{tw})^{\op}$ is quadratic. It follows that $\tC^! = (\tC^{tw})^{\op}$.

The proofs when $\C$ is $\cOI_\Gamma$, $\cFI_d$ or $\cOI_d$ are similar.
\end{proof}

We do not know of a similar description of the quadratic dual of the other examples in Corollary \ref{examples are koszul}.

\begin{corollary} \label{morita to yoneda}
Let $\C$ be $\cFI_\Gamma$, $\cOI_\Gamma$, $\cFI_d$, or $\cOI_d$. Let $\tY$ be the Yoneda category of $\tC$. Then the categories $\tC\Module$ and $\tY\Module$ are equivalent.
\end{corollary}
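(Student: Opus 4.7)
The plan is to assemble the corollary from three results already established in the paper; no new ingredients are needed. First, by Corollary \ref{examples are koszul}, the $k$-linearization $\tC$ of $\C$ is Koszul in each of the four cases under consideration. Applying Proposition \ref{yoneda vs quadratic dual} then yields an isomorphism of graded $k$-linear categories
\begin{equation*}
\tY \;\cong\; (\tC^{!})^{\op}.
\end{equation*}

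Next, I would invoke Theorem \ref{dual of fi}, which gives $\tC^{!} \cong (\tC^{tw})^{\op}$. Taking opposites on both sides produces $(\tC^{!})^{\op} \cong \tC^{tw}$, and composing with the isomorphism above yields
\begin{equation*}
\tY \;\cong\; \tC^{tw}
\end{equation*}
as graded $k$-linear categories. Pullback of modules along this isomorphism gives an equivalence $\tY\Module \simeq \tC^{tw}\Module$.

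Finally, Proposition \ref{tau and mu} provides the equivalence $\tau \colon \tC\Module \to \tC^{tw}\Module$, with quasi-inverse $\mu$. Composing the two equivalences,
\begin{equation*}
\tC\Module \;\xrightarrow{\;\tau\;}\; \tC^{tw}\Module \;\simeq\; \tY\Module,
\end{equation*}
yields the desired equivalence $\tC\Module \simeq \tY\Module$, completing the proof.

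The argument is purely formal. The only thing to keep track of is the bookkeeping of opposites, so that the two $(-)^{\op}$ operations arising from Proposition \ref{yoneda vs quadratic dual} and Theorem \ref{dual of fi} cancel and leave $\tY \cong \tC^{tw}$ rather than some twisted variant. Thus there is no genuine obstacle; indeed, the substantive content is carried entirely by Theorem \ref{dual of fi} (identifying the quadratic dual with the twist) and by Proposition \ref{tau and mu} (showing that the twist trivializes at the level of module categories), both of which are already in hand.
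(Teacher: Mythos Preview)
Your proof is correct and follows exactly the same approach as the paper: the paper's proof simply cites Proposition \ref{yoneda vs quadratic dual}, Corollary \ref{examples are koszul}, Proposition \ref{tau and mu}, and Theorem \ref{dual of fi} as making the result immediate, and you have spelled out precisely how these four ingredients combine, including the cancellation of the two $(-)^{\op}$ operations.
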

\begin{proof}
This is immediate from Proposition \ref{yoneda vs quadratic dual}, Corollary \ref{examples are koszul}, Proposition \ref{tau and mu}, and Theorem \ref{dual of fi}.
\end{proof}

\subsection{Self-duality functor}
In \cite[Section 6]{SS0}, S. Sam and A. Snowden constructed a self-duality functor (called the Fourier transform) on the bounded derived category of finitely generated $\FI$-modules over a field of  characteristic 0; see also \cite[(3.3.8)]{SS1}. We follow their idea in the following construction.

\emph{Suppose that  $\bk$ is a field of characteristic 0, and $\C$ is $\cFI_\Gamma$, $\cOI_\Gamma$, $\cFI_d$, or $\cOI_d$. }

From Corollary \ref{koszul duality functor on bounded derived categories}, Corollary \ref{examples are koszul}, Proposition \ref{tau and mu}, and Theorem \ref{dual of fi}, we have a composition of equivalences:
\begin{align*}
 D^b(\tC\fdmod) & \stackrel{\mathrm{K}}{\longrightarrow} D^b(\tC^!\fdmod) \\
                      & \stackrel{\simeq}{\longrightarrow}  D^b( (\tC^{tw})^{\op}\fdmod ) \\
                      & \stackrel{(-)^*}{\longrightarrow} D^b( \tC^{tw}\fdmod )^{\op} \\
                      & \stackrel{\mu}{\longrightarrow} D^b( \tC\fdmod )^{\op},
\end{align*}
where $\mathrm{{gmod}_{fd}}$ denotes the category of graded modules which are finite dimensional. Let us denote this composition by $\kappa$. We also have:
\begin{align*}
D^b(\tC\fdmod) & \stackrel{\tau}{\longrightarrow}  D^b( \tC^{tw}\fdmod ) \\
& \stackrel{(-)^*}{\longrightarrow}   D^b( (\tC^{tw})^{\op}\fdmod )^{\op} \\
& \stackrel{\simeq}{\longrightarrow}  D^b(\tC^!\fdmod)^{\op} \\
& \stackrel{\mathrm{K'}}{\longrightarrow}  D^b( \tC\fdmod )^{\op},
\end{align*}
where $\mathrm{K'}$ is the quasi-inverse of $\mathrm{K}$ defined in \cite[Theorem 2.12.1]{BGS} and \cite[(5.6)]{Mazorchuk}. Let us denote this composition of functors by $\kappa'$. We have $\kappa\,\kappa' \cong \Id$ and $\kappa' \kappa \cong \Id$. This, together with Theorem \ref{dual of fi} and Corollary \ref{morita to yoneda}, proves Theorem \ref{main-theorem-4}.

\begin{remark}
For a complex $M$ of graded vector spaces, the vector space duality functor $(-)^*$ sends $M$ to the complex $L$ with $L^i_j = (M^{-i}_{-j})^*$ and $d^i_L(f) = (-1)^i f d_M$ for $f\in L^i_j$.
\end{remark}

\begin{proposition}
One has:
\begin{enumerate}
\item  $\kappa \cong \kappa'$.

\item $\kappa^2 \cong \Id$.
\end{enumerate}
\end{proposition}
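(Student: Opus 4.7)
My first step is to observe that statements (1) and (2) are formally equivalent given the natural isomorphisms $\kappa\kappa' \cong \Id$ and $\kappa'\kappa \cong \Id$ established immediately before the proposition. If $\kappa^2 \cong \Id$ then
\[
\kappa' \cong \kappa' \circ \Id \cong \kappa' \circ (\kappa \circ \kappa) \cong (\kappa' \circ \kappa) \circ \kappa \cong \Id \circ \kappa \cong \kappa,
\]
and conversely if $\kappa \cong \kappa'$ then substituting into $\kappa \circ \kappa' \cong \Id$ gives $\kappa^2 \cong \Id$. Hence it suffices to prove one of the two, and I will prove (2).

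Writing $\kappa = \mu \circ (-)^* \circ \phi_* \circ \mathrm{K}$, where $\phi_*$ is the equivalence of module categories induced by the isomorphism $\phi: \tC^! \to (\tC^{tw})^{\op}$ of Theorem \ref{dual of fi}, the iterated composition $\kappa^2$ becomes an eight-fold composition that I plan to telescope by repeatedly cancelling pairs of quasi-inverse functors. The necessary ingredients are: (a) $\mu$ and $\tau$ are mutually quasi-inverse (Proposition \ref{tau and mu}); (b) the canonical double-dual isomorphism $(-)^{**} \cong \Id$ on finite-dimensional complexes of graded modules; (c) Koszul bi-duality in the form $\mathrm{K}_{\tC^!} \circ \mathrm{K}_{\tC} \cong \Id$ on $D^b(\tC\fdmod)$, using the identification $(\tC^!)^! \cong \tC$ obtained by iterating Proposition \ref{yoneda vs quadratic dual}; and (d) the compatibility of $\phi$ with biduality, namely that the $\phi$-construction of Theorem \ref{dual of fi} applied to $(\tC^{tw})^{\op}$ in place of $\tC$ yields an equivalence which, when composed with $\phi_*$, realizes the canonical identification $(\tC^!)^! \cong \tC$. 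After reorganising $\kappa^2$ and cancelling adjacent quasi-inverse pairs using (b), (a), (d), (c) in turn, the composition collapses to $\Id$.

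The main obstacle is verifying items (c) and (d) in our infinite setting. For (c), the standard Koszul bi-duality of \cite{BGS} for finite-dimensional Koszul algebras extends to our $\Ai$-type categories via Proposition \ref{restriction of Koszul modules} and Theorem \ref{restriction of Koszul categories}, by passing to finite convex subcategories and assembling the natural isomorphisms coherently. For (d), the essential difficulty is tracking the determinant twists $\det(\Delta_{\rho(\alpha)})$ that define $\phi$: iterating the construction produces pairs of such twists, one of which acquires an additional sign from $(-)^*$. Verifying that these pair up correctly via the standard isomorphism $\det(\Delta) \otimes_k \det(\Delta)^* \cong k$ is a combinatorial sign-chase, carried out by unpacking the explicit construction of $\phi$ in the proof of Theorem \ref{dual of fi}.
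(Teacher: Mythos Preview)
Your observation that (1) and (2) are formally equivalent given $\kappa\kappa'\cong\Id$ and $\kappa'\kappa\cong\Id$ is correct, and the paper indeed derives (2) from (1) in exactly the one-line manner you indicate. However, your proposed proof of (2) by ``telescoping'' $\kappa^2$ has a genuine gap, and the paper takes a different, more direct route.

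The difficulty is this: writing out $\kappa^2 = \mu\circ(-)^*\circ\phi_*\circ\mathrm{K}\circ\mu\circ(-)^*\circ\phi_*\circ\mathrm{K}$, the pairs you wish to cancel are \emph{not adjacent}. Both copies of $\mathrm{K}$ are the Koszul functor $\mathrm{K}_{\tC}$ for $\tC$; neither is $\mathrm{K}_{\tC^!}$, so your item (c) does not apply as written. Likewise there are two copies of $\mu$ and no $\tau$, so item (a) cannot be invoked directly, and the two $(-)^*$'s are separated by $\phi_*\circ\mathrm{K}\circ\mu$. To bring any of these pairs together you must establish commutation relations such as ``$\mathrm{K}_{\tC}\circ\mu\circ(-)^*\circ\phi_*$ agrees with the analogous composite built from $\mathrm{K}_{\tC^!}$'', and that is precisely the nontrivial content you have pushed into the vague item (d). Moreover, (d) as stated invokes Theorem \ref{dual of fi} for $(\tC^{tw})^{\op}$, but that theorem is only proved for the specific categories $\cFI_\Gamma$, $\cOI_\Gamma$, $\cFI_d$, $\cOI_d$; you would need to formulate and prove an analogue for their twists, which is additional work not in the paper.

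The paper instead proves (1) directly. Setting $\varpi=(-)^*\kappa$ and $\varpi'=(-)^*\kappa'$, it uses the explicit BGS formula for the Koszul duality functor to write down $\varpi(M)^p(y)_q$ and $\varpi'(M)^p(y)_q$ as concrete direct sums, differing only in whether the determinant line $\det([\rho(-)])$ is attached at $y$ or at $x$. A short chain of isomorphisms moves this factor across the tensor product, and one then checks compatibility with the $\tC$-action and with the differentials (the latter up to a sign $(-1)^{ij}$, corrected by an explicit isomorphism of complexes). This is exactly the ``combinatorial sign-chase'' you allude to, but carried out at the level of $\kappa$ versus $\kappa'$ rather than buried inside a telescoping of $\kappa^2$, which makes it tractable.
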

\begin{proof}
(1) Let $\varpi = (-)^*\kappa$ and $\varpi' = (-)^*\kappa'$. Let $M\in D^b(\tC\fdmod)$.  From the proof of \cite[Theorem 2.12.1]{BGS}, we have the isomorphisms
\begin{align*}
\varpi(M)^p(y)_q  &\stackrel{\cong}{\longrightarrow}  \bigoplus_{ \substack{ i+j=p \\ x-j = y+q } }
\left(  \tC^{tw}(y,x) \otimes_{\tC(x,x)} M^i(x)_j \right)  \otimes_{\bk} \det([\rho(y)]) \\
 &\stackrel{\cong}{\longrightarrow}  \bigoplus_{ \substack{ i+j=p \\ x-j = y+q } }
 \tC(y,x) \otimes_{\tC(x,x)} \left( M^i(x)_j \otimes_{\bk} \det( [\rho(x)] ) \right) \\
&\stackrel{\cong}{\longrightarrow} \varpi' (M)^p(y)_q .
\end{align*}
It is straightforward to verify that the composition of the above isomorphisms is compatible with the $\tC$-module structures. It is also compatible with the differentials up to signs, but by a routine verification, the complex is isomorphic to the one with the altered signs.\footnote{The isomorphism is $(-1)^{ij}$.} Thus, $\varpi \cong \varpi'$, and hence $\kappa\cong\kappa'$.

(2) One has: $\kappa^2 \cong \kappa' \kappa \cong \Id$.
\end{proof}

\end{document}